\documentclass{amsart}
\usepackage{amsfonts,amssymb,amscd,amsmath,enumerate,verbatim,calc}
\usepackage[all]{xy}
\usepackage{euscript}

\newcommand{\CM}{Cohen-Macaulay}

\newcommand{\wrt}{with respect to}

\newcommand{\D}{\mathcal{D} }

\newcommand{\bx}{\mathbf{x} }
\newcommand{\n}{\mathfrak{n} }
\newcommand{\m}{\mathfrak{m} }

\newcommand{\q}{\mathfrak{q} }

\newcommand{\Z}{\mathbb{Z} }
\newcommand{\Q}{\mathbb{Q} }

\newcommand{\eR}{\mathcal{R} }
\newcommand{\Fc}{\mathcal{F} }
\newcommand{\Hc}{\mathcal{H} }

\newcommand{\C}{\mathcal{C}}
\newcommand{\Dc}{\mathcal{D}}
\newcommand{\Kc}{\mathcal{K} }
\newcommand{\Ic}{\mathcal{I} }

\newcommand{\Tc}{\mathcal{T} }
\newcommand{\Fb}{\mathbb{F} }
\newcommand{\Gc}{\mathcal{G} }

\newcommand{\rt}{\rightarrow}

\newcommand{\ov}{\overline}

\newcommand{\Om}{\Omega}

\newcommand{\wh}{\widehat }
\newcommand{\wt}{\widetilde }

\newcommand{\image}{\operatorname{image}}
\newcommand{\cone}{\operatorname{cone}}

\newcommand{\depth}{\operatorname{depth}}
\newcommand{\charp}{\operatorname{char}}

\newcommand{\coker}{\operatorname{coker}}
\newcommand{\cx}{\operatorname{cx}}
\newcommand{\rank}{\operatorname{rank}}
\newcommand{\curv}{\operatorname{curv}}
\newcommand{\rad}{\operatorname{rad}}
\newcommand{\height}{\operatorname{height}}
\newcommand{\htt}{\operatorname{height}}

\newcommand{\Spec}{\operatorname{Spec}}
\newcommand{\Supp}{\operatorname{Supp}}

\newcommand{\CMS}{\operatorname{\underline{CM}}}

\newcommand{\CMa}{\operatorname{CM}}

\newcommand{\projdim}{\operatorname{projdim}}
\newcommand{\Hom}{\operatorname{Hom}}
\newcommand{\md}{\operatorname{mod}}
\newcommand{\smod}{\operatorname{^*mod}}

\newcommand{\sHom}{\operatorname{\underline{Hom}}}
\newcommand{\apc}{\operatorname{\underline{APC}}}
\newcommand{\gapc}{\operatorname{\underline{^*APC}}}
\newcommand{\Ext}{\operatorname{Ext}}
\newcommand{\Tor}{\operatorname{Tor}}
\newcommand{\thick}{\operatorname{thick}}
\newcommand{\Min}{\operatorname{Min}}
\theoremstyle{plain}

\newtheorem{theorem}{Theorem}[section]
\newtheorem{corollary}[theorem]{Corollary}
\newtheorem{lemma}[theorem]{Lemma}
\newtheorem{proposition}[theorem]{Proposition}

\theoremstyle{definition}
\newtheorem{definition}[theorem]{Definition}
\newtheorem{s}[theorem]{}
\newtheorem{remark}[theorem]{Remark}

\newtheorem{construction}[theorem]{Construction}
\theoremstyle{remark}

\begin{document}

\title[Stable category]{On the Stable category of maximal Cohen-Macaulay modules over Gorenstein rings-II }
\author{Tony~J.~Puthenpurakal}
\date{\today}
\address{Department of Mathematics, IIT Bombay, Powai, Mumbai 400 076}

\email{tputhen@math.iitb.ac.in}
\subjclass{Primary  13C14,  13D09 ; Secondary 13C60, 13D02}
\keywords{stable category of Gorenstein rings, complete intersections, Hensel rings, Karoubi envelopes}

 \begin{abstract}
Let $(A,\m), (B,\n) $ be  Gorenstein local rings  and let $\CMS(A)$ be its stable category of finitely generated maximal \CM \ $A$-modules. Suppose we have an equivalence $\Phi \colon \CMS(A) \rt \CMS(B)$ as triangulated categories. We show
\begin{enumerate}
  \item If $A, B$ are not hypersurfaces then $\dim A = \dim B$.
  \item If $M$ is a maximal \CM \ $A$-module then $$\curv_A(M) = \curv_B (\Phi(M)),$$ here $\curv_A(M) = \limsup_n \sqrt[n]{\ell(\Tor^A_n(M,k))}$.
  \item $A$ satisfies Serre's condition $R_i$ if and only if $B$ satisfies $R_i$.
  \item $A$ is a complete intersection on the punctured spectrum of $A$ if and only if $B$ is a complete intersection on the punctured spectrum of $B$.
\end{enumerate}
We also show that $\Phi$ imposes constraints of residue field of $B$ in terms of residue field of $A$ and vice-versa. Finally if $I$ is an ideal in $A$ such that the extended Rees algebra $\eR(I) = A[It, t^{-1}]$ is Gorenstein then we construct a triangulated functor $\Psi \colon \CMS^\Z(\eR(I)) \rt \CMS(A)$ where $\CMS^\Z(\eR(I))$ is the stable category of all graded maximal \CM \ $\eR(I)$-modules. We show that $\Psi$ induces an equivalence $\CMS^\Z(\eR(I))/\ker \Psi \rt \CMS(A)$. We give some applications of this map.
\end{abstract}
 \maketitle
\section{introduction}
Dear Reader, while reading this paper it is a good idea to have part 1 of this paper \cite{P-stable}.
Following Auslander's insight we study properties of triangle equivalences between the stable category of MCM modules over Gorenstein ring.

Let $(A,\m)$ be a  commutative Gorenstein local ring with residue field $k$. Let $\CMa(A)$ denote the full subcategory of finitely generated  MCM (= maximal \CM) $A$-modules  and let $\CMS(A)$ denote the stable category of MCM $A$-modules. Recall that objects in $\CMS(A)$ are same as objects in $\CMa(A)$. However the set of morphisms $\underline{\Hom_A}(M,N)$ between $M$ and $N$ is  $= \Hom_A(M,N)/P(M,N)$ where $P(M,N)$ is the set of $A$-linear maps from $M$ to $N$ which factor through a finitely generated free module. It is well-known that $\CMS(A)$ is  a triangulated category with translation functor $\Omega^{-1}$,  (see \cite{Bu}).  Here $\Omega(M)$ denotes the syzygy of $M$ and $\Omega^{-1}(M)$ denotes the co-syzygy of $M$. Also recall that an object $M$ is zero in $\CMS(A)$ if and only if it is free  considered as an $A$-module. Furthermore $M \cong N$ in $\CMS(A)$ if and only
if there exists finitely generated free modules $F,G$ with $M\oplus F \cong N \oplus G$ as $A$-modules.  If $A$ is regular local then all MCM
modules are free. So $\CMS(A) = 0$.

 We use Neeman's book \cite{N} for notation on triangulated categories. However we will assume that if $\mathcal{C}$ is a triangulated category then $\Hom_\mathcal{C}(X, Y)$ is a set for any objects $X, Y$ of $\mathcal{C}$.

\textbf{I:} Our first result states that in most cases dimension of a Gorenstein local ring is a stable invariant. More precisely:
\begin{theorem}
\label{first}
Let $(A,\m), (B,\n) $ be  Gorenstein local rings. Suppose we have an equivalence $\Phi \colon \CMS(A) \cong \CMS(B)$ as triangulated categories. We have
\begin{enumerate}[\rm (1)]
  \item If $A, B$ are not hypersurfaces then $\dim A = \dim B$.
  \item  If $A$ (and so $B$)  is a hypersurface  singularity and if the completion of $A$ has a MCM module $M$ with $\Omega(M) \ncong M$  and $M$ free on the punctured spectrum of $A$ then $\dim A - \dim B$ is even.
 \end{enumerate}
\end{theorem}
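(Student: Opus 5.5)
Our plan is to find a triangle-invariant quantity that detects $\dim A$. The natural candidate comes from Tate cohomology. For $M, N$ in $\CMS(A)$, set $\widehat{\Ext}^i_A(M,N) = \sHom_A(M, \Omega^{-i}N)$, which is defined purely in terms of the triangulated structure. A triangle equivalence $\Phi$ commutes with the shift up to natural isomorphism, so it preserves these groups. Following part 1 \cite{P-stable}, we also use that $\Phi$ preserves the property "$M$ is free on the punctured spectrum". This holds because such $M$ are exactly the objects for which $\sHom(M,N)$ has finite length for all $N$. Finite length is itself detected categorically: the graded-commutative ring $\bigoplus_i \sHom(M,\Omega^{-i}M)$ acts on $\sHom(M,N)$, and finite length can be read off from this action.

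For part (1), we take $M$ in $\CMa(A)$ free on the punctured spectrum, and let $M^\dagger = \Hom_A(M,A)$. By local duality (Auslander–Reiten duality), with $d=\dim A$, we have
\[
\sHom_A(M,N) \cong \Hom_{A}\bigl(\sHom_A(N, \Omega^{-(d-1)}... )\bigr)^\vee,
\]
or more precisely $\sHom_A(M,N)^\vee \cong \widehat{\Ext}^{\,d-1}$-type pairings. This identifies the Serre functor on the subcategory $\CMS_0(A)$ of modules free on the punctured spectrum as $S_A = \Omega^{-(d-1)}$, i.e.\ $\Omega^{1-d}$. The key step is that Serre functors are unique up to isomorphism and are preserved by triangle equivalences. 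Hence $\Phi$ restricted to $\CMS_0(A)\to\CMS_0(B)$ intertwines $\Omega_A^{1-d}$ with $\Omega_B^{1-e}$, where $e=\dim B$. Applying this to any nonzero $X$ gives $\Omega_B^{1-e}\Phi X \cong \Phi\Omega_A^{1-d}X \cong \Omega_B^{1-d}\Phi X$, and therefore $\Omega_B^{d-e}Y \cong Y$ for all $Y$ in $\CMS_0(B)$.

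There are two technical points. First, Matlis duality $(-)^\vee$ is not categorical, so we must phrase the Serre duality with $\Hom_k$ into a fixed module. Alternatively, we can use the $\ell$-length pairing as in \cite{P-stable}, passing to completions if necessary since completion preserves these Hom sets for modules free on the punctured spectrum. Second, we need nonzero objects in $\CMS_0$; the syzygy $\Omega^d k$ works because $A$ is not regular.

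It remains to rule out $d \neq e$ when $A$ is not a hypersurface. Suppose $\Omega^{n}Y\cong Y$ with $n=|d-e|>0$ for $Y=\Omega^e_B k$. Then $\Omega^e k$ would have a periodic resolution, so $k$ would have bounded Betti numbers. By Gulliksen/Tate, this forces $B$ to be a hypersurface, which is a contradiction. (By symmetry we need $B$ not a hypersurface; this follows from $A$ not being a hypersurface, since hypersurfaces are characterized by every object in $\CMS$ satisfying $\Omega^2\cong \mathrm{id}$. That is a categorical statement, as $\Omega^{2}=[-2]$.)

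For part (2), in the hypersurface case we have $\Omega^2\cong \mathrm{id}$ on $\CMS(\widehat A)$ by Eisenbud's theorem. The equation above then only gives $\Omega^{d-e}M\cong M$ for $M$ in $\CMS_0$. If $d-e$ were odd, this would say $\Omega M\cong M$ for all such $M$, after passing to completions. We would need to check that $\Phi$ induces a triangle equivalence on the $\CMS_0$ of the completions; this follows since $\CMS_0(A)\simeq\CMS_0(\widehat A)$. The resulting conclusion contradicts the hypothesis that some $M$ has $\Omega M\ncong M$. Therefore $d-e$ is even.

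The main obstacle is establishing the Serre-duality identification $S=\Omega^{1-d}$ in a form that is intrinsic, i.e.\ not relying on Matlis duality over $A$. Our first attempt would be to use $k$-duality $\Hom_k(-,E)$ after showing that $\Phi$ is $k$-linear up to field automorphism; that requirement is exactly where residue-field constraints enter.
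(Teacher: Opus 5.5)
There is a genuine gap, and it sits at the step the paper is built around: transporting $\Phi$ to the completions. You justify this by ``$\CMS_0(A)\simeq\CMS_0(\wh A)$'', but that is false in general. Takahashi \cite[3.2]{T} only shows that $\CMS^0(A)\to\CMS^0(\wh A)$ is fully faithful and an equivalence \emph{up to direct summands}.

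For example, let $\charp k\neq 2$ and let $A$ be $k[x,y]/(y^2-x^2-x^3)$ localized at $(x,y)$. Then $A$ is a domain, but $\wh A\cong k[[s,t]]/(st)$. The module $N=\wh A/(s)$ lies in $\CMS^0(\wh A)$ but is not isomorphic in $\CMS(\wh A)$ to any $\wh M$: completions of $A$-modules, plus free summands, have equal ranks at the two minimal primes of $\wh A$, and $N$ plus free summands does not. Worse, since the indecomposable MCM $\wh A$-modules are $\wh A$, $\wh A/(s)$, $\wh A/(t)$, equal ranks force $\wh M\cong\wh A^a\oplus(\wh A/(s)\oplus\wh A/(t))^b$ for every MCM $A$-module $M$. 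Hence $\Omega M\cong M$ for all $M\in\CMS^0(A)$, while $\Omega N\cong\wh A/(t)\ncong N$. So knowing $\Omega^{d-e}\cong\mathrm{id}$ on objects of $\CMS^0(A)$ yields no contradiction with the hypothesis of (2), which is about $\wh A$.

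The missing idea is the Karoubi envelope. Since $\CMS^0(\wh A)$ is Hom-finite and KRS, Theorem~\ref{crucial} (see Lemma~\ref{A-K}) upgrades Takahashi's result to $K(\CMS^0(A))\simeq\CMS^0(\wh A)$. Since $\Phi$ restricts to an equivalence $\CMS^0(A)\simeq\CMS^0(B)$ (Matsui \cite[4.6]{MH}), one gets $\CMS^0(\wh A)\simeq K(\CMS^0(A))\simeq K(\CMS^0(B))\simeq\CMS^0(\wh B)$.

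Nor can you avoid the completion in (1) by working over $A$ directly. Transporting a Serre functor requires a duality that $\Phi$ respects, and there is none a priori:
\begin{enumerate}[\rm (i)]
\item the Hom-sets of $\CMS^0(A)$ are finite-length $A$-modules, not $k$-vector spaces;
\item $A$ need not contain a field;
\item there is no common base over which $\Phi$ is linear, and no reason for it to be compatible with $k$ and $l$ (Theorem~\ref{third} only relates certain finite extensions of them).
\end{enumerate}
You leave this as the ``main obstacle'', so the central step of (1) is unproved, and the suggested route through $k$-linearity is not viable.

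Over the completions no linearity is needed. $\CMS^0(\wh A)$ is KRS and has Auslander--Reiten triangles $\Omega^{2-d}Z\to E\to Z\to\Omega^{1-d}Z$. These are defined purely by composition of morphisms, so any triangle equivalence preserves them. This gives $\Omega^{d-e}Y\cong Y$ for all $Y\in\CMS^0(\wh B)$. Your closing arguments then go through over $\wh A$ and $\wh B$: periodicity of $\Omega^e k$ forces a hypersurface in (1), and $\Omega^2\cong\mathrm{id}$ plus parity gives (2). The paper, after the Karoubi-envelope reduction, simply invokes the Henselian case \cite[6.12]{P-stable}.
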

{In part-1 of the paper we had proved the result when $A$ and $B$ are Henselian, see \cite[1.4]{P-stable}.

\textbf{II:} To state our next result we need a few preliminaries. If $M$ is a finitely generated  $A$-module then set $\beta_i(M) = \ell(\Tor^A_i(M,k))$ for $i \geq 0$. The \emph{complexity} of $M$ is defined as
$$\cx_A(M) = \inf\{ r \in \mathbb{N} \mid  \limsup_{n \rt \infty} \frac{\beta_n(M)}{n^{r - 1}}  < \infty \}.$$
It is possible that $\cx_A(M)$ is infinite. To deal with this the following notion was introduced in \cite{A-ext}:
$$\curv_A(M) = \limsup_{n \rt \infty} \sqrt[n]{\beta_n(M)}.$$
It can be shown that $\curv(M) \leq \curv(k) < \infty$ where $k$ is the residue field of $A$; see \cite[4.2.3, 4.2.4]{A}. Our second result is
\begin{theorem}
\label{second}Let $(A,\m), (B,\n) $ be  Gorenstein local rings. Suppose we have an equivalence $\Phi \colon \CMS(A) \cong \CMS(B)$ as triangulated categories. Then
\begin{enumerate}[\rm (1)]
  \item $\curv_A(M) = \curv_B(\Phi(M))$ for any $M \in \CMS(A)$.
  \item $\curv_A(A/\m) = \curv_B(B/\n)$.
  \item If $M \in \CMS(A)$ then $\cx_A(M)$ is infinite if and only if $\cx_B(\Phi(M))$ is infinite.
  \item If $M \in \CMS(A)$ and $\cx_A(M)$ is finite then $\cx_A(M) = \cx_B(\Phi(M))$.
\end{enumerate}
\end{theorem}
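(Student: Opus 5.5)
The plan is to find an \emph{intrinsic} description, inside the triangulated category $\CMS(A)$, of a numerical sequence that has the same exponential (respectively polynomial) growth as the Betti sequence $\beta_n(M)$. Then $\Phi$ transports it to the corresponding sequence for $\Phi(M)$. The starting point is the standard identification, for $M,N$ MCM over a Gorenstein ring and $n\geq 1$,
$$\Ext^n_A(M,N)\cong \sHom_A(\Omega^n M, N)=\Hom_{\CMS(A)}(M,\Omega^{-n}N),$$
whose right-hand side is intrinsic since $\Omega^{-1}$ is the translation functor. Because $\Phi$ is $A$-linear only up to the ring isomorphism it induces on endomorphism rings, I will not measure these groups by $\ell_A$. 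Instead I will measure them as modules over $E=\operatorname{End}_{\CMS(A)}(N)$, for a well-chosen test object $N$, via $\Phi\colon E\cong \operatorname{End}_{\CMS(B)}(\Phi N)$.

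First step: reduce to the case where $A$ and $B$ are Henselian (or complete), so that $\CMS$ is Krull--Schmidt. I would pass to Karoubi envelopes of the henselizations, as in part 1 \cite{P-stable}. Curvature and complexity are unchanged by completion, so this reduction costs nothing on the numerical side; checking that $\Phi$ lifts is the technical part, and I will assume the machinery from part 1 provides it.

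Second step: take as test object $N=\Omega^d k$ (with $d=\dim A$). Betti numbers are recovered by $\beta_n(M)=\dim_k\Ext^n_A(M,k)$, and dimension shifting gives $\Ext^{n}_A(M,\Omega^d k)$ related to $\Ext^{n-d}_A(M,k)$ up to bounded error. I then want to show that $\ell_{E}\bigl(\Hom_{\CMS(A)}(M,\Omega^{-n}N)\bigr)$ is comparable to $\beta_n(M)$, within multiplicative constants independent of $n$. The comparison constant should involve $[\,E/\operatorname{rad}E : k\,]$; this is exactly where the residue field constraints of the abstract should enter. Everything here is intrinsic except the choice of $N$.

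\textbf{Main obstacle.} The image $\Phi(\Omega^d k)$ need not be $\Omega^{d'} l$ for the residue field $l$ of $B$. To handle this I would use that $\Omega^d k$ generates $\CMS(A)$ as a thick subcategory up to summands (in the Krull--Schmidt setting), and that triangles change growth rates only by subadditivity. Concretely, any two thick generators $N,N'$ give sequences $\ell(\Hom(M,\Omega^{-n}N))$ and $\ell(\Hom(M,\Omega^{-n}N'))$ that bound each other up to constants and bounded index shifts. This gives (1), with the same argument yielding polynomial degree comparisons.

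Deducing the remaining parts:
\begin{enumerate}[\rm (a)]
  \item Part (2) follows from (1) together with $\curv_A(k)=\curv_A(\Omega^d k)=\sup_M\curv_A(M)$ \cite[4.2.3, 4.2.4]{A}; a supremum over all objects is preserved by an equivalence.
  \item Parts (3) and (4) follow by running the same comparison at polynomial scale. Two sequences bounding each other up to constants and bounded shifts have the same polynomial growth degree, so $\cx$ is preserved, including whether it is finite.
\end{enumerate}
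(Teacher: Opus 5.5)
\textbf{There is a genuine gap in your ``main obstacle'' step.} You claim that $\Omega^d k$ generates all of $\CMS(A)$ as a thick subcategory. This is false unless $A$ is an isolated singularity. $\CMS^0(A)$, the modules free on the punctured spectrum, is a thick subcategory containing $\Omega^d k$, and by Takahashi's theorem $\thick(\Omega^d k)=\CMS^0(A)$ exactly. For instance, for $A=k[[x,y,z]]/(xy)$ the MCM module $A/(x)$ is not free at $(x,y)$, so it is not in $\thick(\Omega^2 k)$.

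Consequently nothing in your argument shows that $\Phi(\Omega^d k)$ lies in $\thick(\Omega^{d'} l)$. Without that, you cannot compare $\ell_B(\Ext^n_B(\Phi M,\Phi(\Omega^d k)))$ with the Betti numbers of $\Phi M$. The missing ingredient is that $\Phi$ restricts to an equivalence $\CMS^0(A)\simeq\CMS^0(B)$. The paper takes this from Matsui's theorem, as in the proof of Theorem \ref{first}. You can also see it directly: $N\in\CMS^0(A)$ iff $\sHom_A(N,N)$ has finite length iff the ring $\operatorname{End}_{\CMS(A)}(N)$ is Artinian, and any equivalence preserves that property. With this in hand your subadditivity argument goes through, with ``thick generator of $\CMS^0$'' in place of ``thick generator of $\CMS$''. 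The paper in fact proves only the one-sided bound $\curv_A(M,N)\le\curv_A(M)$ for all $N\in\CMS^0(A)$ (Theorem \ref{growth}, by induction on the level in the thick closure). It then gets the reverse inequality by running the same argument for $\Phi^{-1}$.

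\textbf{Drop your first step.} No lift of $\Phi$ to a functor $\CMS(\wh{A})\to\CMS(\wh{B})$ is available. The Karoubi-envelope argument in this paper gives only $\CMS^0(\wh{A})\simeq\CMS^0(\wh{B})$, which says nothing about $\Phi(M)$ when $M$ is not free on the punctured spectrum. Fortunately Krull--Schmidt is never used:
\begin{itemize}
\item Thick subcategories are closed under summands by definition.
\item For any module-finite $A$-algebra $E$ and finite length $E$-module $D$ one has $\ell_E(D)\le\ell_A(D)\le\dim_k(E/\operatorname{rad}E)\,\ell_E(D)$. The $B$-side has the analogous bound with its own constant, so no relation between the residue fields is needed either.
\end{itemize}

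Apart from these two points, your route matches the paper's. The paper measures $\Ext^n_A(M,N)=\sHom_A(M,\Omega^{-n}N)$ over the center $Z_M$ of $\sHom_A(M,M)$, so one commutative ring serves every test object $N$ and Proposition \ref{bella} applies. You instead use the noncommutative ring $\operatorname{End}(N)$ of a fixed test object; that works equally well.
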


\textbf{III:} The residue field $k$ of $A$ has dimension zero. So it is far from being MCM. Nevertheless surprisingly a stable equivalence $\Phi \colon \CMS(A) \rt \CMS(B)$ imposes constraints on the residue field of $A$ in terms of residue field of $B$ and vice-versa.
\begin{theorem}
\label{third}Let $(A,\m), (B,\n) $ be  Gorenstein local rings. Suppose we have an equivalence $\Phi \colon \CMS(A) \cong \CMS(B)$ as triangulated categories. Assume $A$ (and so $B$) is not regular. Let $k$ be the residue field of $A$ and let $l$ be the residue field of $B$. Then
\begin{enumerate}[\rm (1)]
  \item $\charp k = \charp l$.
  \item $k$ is finite if and only if $l$ is finite.
  \item If $k$ is infinite then both $k$ and $l$ have the same cardinality.
  \item $k$ is perfect if and only if $l$ is perfect.
  \item Assume $k$ is algebraically closed.
  Then
  \begin{enumerate}[\rm (a)]
    \item If $\charp k > 0$ then $l \cong k$.
    \item If $\charp k = 0$ then either $l \cong k$ or $l$ is a real closed field and $l(\sqrt{-1}) \cong k$.
  \end{enumerate}
  \end{enumerate}
\end{theorem}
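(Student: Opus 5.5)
The plan is to recover from $\CMS(A)$, using only the triangulated structure, enough of the residue field $k$ to compare it with $l$. I will use the object $E_A = \Omega^d_A(k)$, where $d = \dim A$. It is MCM and non-free because $A$ is not regular. Stable Hom groups $\underline{\Hom}_A(X,Y)$ are Tate cohomology groups $\widehat{\Ext}^0_A(X,Y)$, and they are $A$-modules compatibly with composition. Since $\Omega$ is the inverse of the shift, it commutes with $\Phi$ up to isomorphism. Hence for every $i$, $\underline{\Hom}_A(E_A, \Omega^{-i}E_A) \cong \widehat{\Ext}^i_A(k,k)$ is carried by $\Phi$ isomorphically onto $\underline{\Hom}_B(\Phi E_A, \Omega^{-i}\Phi E_A)$. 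These groups are $k$-vector spaces (annihilated by $\m$), and they are nonzero for infinitely many $i$ because $A$ is singular.

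For (1) I will use the following intrinsic description of the characteristic. For an integer $n$, I claim that $n\cdot \mathrm{id}_X$ is an isomorphism for every object $X$ if and only if $n$ is a unit in $A$, that is, $\charp k \nmid n$. If $n$ is a unit this is clear. If $n \in \m$, then $n\cdot\mathrm{id}_{E_A}$ acts on the nonzero $k$-space $\widehat{\Ext}^i_A(k,k)$ as zero. If it were an isomorphism it would induce a bijection on $\underline{\Hom}(E_A,\Omega^{-i}E_A)$, which is impossible. This property of $n$ is preserved by the additive equivalence $\Phi$, so $\charp k$ is read off as $0$ or as the least prime $p$ for which $p\cdot\mathrm{id}$ fails to be invertible. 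This gives $\charp k = \charp l$.

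For (2) and (3) I will compare cardinalities of Hom sets. On the $A$ side, $H_i = \underline{\Hom}_A(E_A,\Omega^{-i}E_A)$ is a finite-dimensional nonzero $k$-vector space. So $|H_i| = |k|^{r_i}$ when $k$ is finite, and $|H_i| = |k|$ when $k$ is infinite. Transporting by $\Phi$, I get the same sets as stable Hom groups between MCM $B$-modules. Those are finitely generated $B$-modules, but not necessarily of finite length unless $B$ is an isolated singularity. To fix this I will also run the argument symmetrically from $E_B = \Omega^{\dim B}_B(l)$ pulled back by $\Phi^{-1}$. A finitely generated module over a local ring whose residue field is infinite has cardinality at least that of the residue field, and it is bounded by $|B|$. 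I will need to control $|B|$ relative to $|l|$. The cleanest route is to pass to $\underline{\Hom}$ modulo the radical, i.e.\ the $k$- (resp.\ $l$-) vector spaces $\underline{\Hom}(E,\Omega^{-i}E)\otimes k$ detected internally. Comparing in both directions yields that $k$ is finite iff $l$ is, and that $|k| = |l|$ in the infinite case.

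For (4) and (5) I expect the main obstacle: making the residue field appear as an honest ring inside the category. My first attempt is to consider the endomorphism ring $R = \underline{\End}_B(\Phi E_A)$. It contains $k$-linear structure transported from $A$, since $\underline{\End}_A(E_A)$ is a $k$-algebra quotient of $\underline{\End}$ acting on the $k$-spaces $H_i$. It also receives the structure map from $B$, hence from $l$ after passing to a simple quotient $R/\mathfrak{r}$ with $\mathfrak{r}$ a maximal two-sided ideal. The goal is to show that this simple quotient is a finite-dimensional algebra over both $k$ and $l$, with the images of $k$ and $l$ in its centre, which is a field $K$. This will give finite extensions $k \subseteq K \supseteq l$. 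Perfectness then transfers, since finite extensions and finite subextensions of perfect fields are perfect in characteristic $p$. For (5), if $k$ is algebraically closed then $K = k$ and $l \subseteq k$ is a finite extension. By the Artin--Schreier theorem, either $l = k$ or $\charp k = 0$, $l$ is real closed, and $k = l(\sqrt{-1})$. In characteristic $p$ only $l \cong k$ survives. The delicate part is proving the finiteness of $K$ over both fields without an isolated-singularity hypothesis, and I would try to obtain it by working with $\widehat{\Ext}$ groups of $k$ (resp.\ $l$), which are always finite-dimensional.
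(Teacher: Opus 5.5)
\textbf{Verdict.} Your route works and is genuinely different from the paper's. As written, though, it leaves open exactly the step you call delicate, and the remedy you suggest (working with $\widehat{\Ext}_B(l,l)$) would not help directly, since nothing identifies $\Phi(E_A)$ with $\Omega^{\dim B}_B(l)$.

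\textbf{Filling the open step.} It is a short Nakayama argument. Put $R=\sHom_B(\Phi E_A,\Phi E_A)\cong\sHom_A(E_A,E_A)$ and let $\mathfrak r$ be a maximal two-sided ideal of $R$.
\begin{itemize}
\item $R/\mathfrak r$ is a nonzero finitely generated $B$-module, being a quotient of $\Hom_B(\Phi E_A,\Phi E_A)$.
\item $B$ acts on it through central elements, so $\n\,(R/\mathfrak r)$ is a two-sided ideal.
\item This ideal is proper by Nakayama, hence $0$ by simplicity.
\end{itemize}
So $R/\mathfrak r$ is a finite-dimensional simple $l$-algebra. Its centre $K$ is a field containing a copy of $l$, with $[K:l]<\infty$. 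The same argument over $A$ gives $[K:k]<\infty$. This works for any non-free MCM object, so the special choice $E_A=\Omega^d_A(k)$ is not even needed.

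\textbf{Comparison with the paper.} The paper does not work in $\CMS(A)$ directly. It reuses the machinery behind Theorem~\ref{first}: Matsui's restriction to $\CMS^0$, Takahashi's equivalence up to direct summands, and Theorem~\ref{crucial} on Karoubi envelopes. This gives $\CMS^0(\wh{A})\cong\CMS^0(\wh{B})$. Over the complete rings an indecomposable non-free $M$ has a local endomorphism ring, so $\sHom(M,M)/\rad\sHom(M,M)$ is a division ring whose centre is finite over $k$ and, through the equivalence, over $l$.

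You take a simple quotient instead of the quotient of a local ring by its radical. So you need neither completion nor the Krull--Remak--Schmidt property, and your route is shorter and self-contained. The field-theoretic endgame for (4) and (5) is the same in both.

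\textbf{Smaller remarks.}
\begin{itemize}
\item In (2)--(3) no upper bound via $|B|$ is needed. A nonzero finitely generated $B$-module $W$ satisfies $|W|\ge|W/\n W|\ge|l|$. Comparing $\sHom_A(E_A,E_A)$ with its analogue built from $\Omega^{\dim B}_B(l)$ therefore gives the needed inequalities in both directions.
\item The phrase about $\otimes k$ ``detected internally'' should be dropped, since tensoring with $k$ is not intrinsic to the category.
\item Once $K$ is available, (1)--(3) follow at once from $K$ being finite over both $k$ and $l$, as in the paper. Your separate arguments for them are correct (the characterisation of $\charp k$ via invertibility of $n\cdot\mathrm{id}$ is a nice touch), but redundant.
\end{itemize}
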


\textbf{IV:} There are not many examples of triangulated functors from stable categories. One is given by flat maps, the second is given by specialization with respect to regular sequences,  Kn\"orrer periodicity, \cite{K} and the map giving MCM approximations, \cite[p.\ 740]{BJM}. To this meager list we add another triangulated functor. Let $I \subseteq \m$ be an ideal in $A$ and let  $\eR_A(I) = A[It, t^{-1}] = \bigoplus_{n \in \Z}I^n$ be the extended Rees algebra of $I$ and let $G_I(A) = \bigoplus_{n\geq 0}I^n/I^{n+1}$ be the associated graded ring of $I$.  Note $t^{-1}$ is $\eR_A(I)$-regular and $\eR_A(I)/(t^{-1})  = G_I(A)$. There are bountiful examples of $\m$-primary ideals with $\eR_A(I)$-Gorenstein, see \cite[1.5]{P-Itoh-n}.
If $\eR_A(I)$ is Gorenstein then let $\CMS_\Z(\eR_A(I))$ denotes the category of $\Z$-graded MCM $\eR_A(I)$-modules.
We prove:
\begin{theorem}
\label{fourth} Let $(A,\m)$ be a Gorenstein local ring and let $I \subseteq \m$ be an ideal such that $\eR_A(I)$ is Gorenstein. Then  there exists a triangulated functor \\ $\Psi \colon \CMS_\Z(\eR_A(I)) \rt \CMS(A)$ such that $\Psi$ induces an equivalence
$$\ov{\Psi} \colon \frac{\CMS_\Z(\eR_A(I))}{\ker \Psi} \cong \CMS(A).$$
\end{theorem}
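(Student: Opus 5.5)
The functor will be ``dehomogenize'': invert $t^{-1}$ and take degree zero. Set $\eR = \eR_A(I)$ and $u = t^{-1}$. Then $\eR_u = A[t,t^{-1}]$, and for a graded $\eR$-module $N$ the degree-zero part $(N_u)_0$ is a finitely generated $A$-module. Define $\Psi(N) = (N_u)_0$. The functor $N \mapsto (N_u)_0$ is exact on graded modules, and the degree-zero part of a graded free $A[t,t^{-1}]$-module is free over $A$. More precisely, $(\eR(-a)_u)_0 = A$, because $(A[t,t^{-1}](-a))_0 = A\,t^{-a}$. Moreover, $N_u \cong (N_u)_0 \otimes_A A[t,t^{-1}]$, since $t$ is a unit of degree one. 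It follows that $\Psi$ is exact and sends graded free $\eR$-modules to free $A$-modules.

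The first step is to check that $\Psi$ takes MCM modules to MCM modules. If $N$ is graded MCM over the Gorenstein ring $\eR$, then $N_u$ is MCM over $A[t,t^{-1}]$ at all its localizations. Since $A[t,t^{-1}] \cong A\otimes_{\mathbb Z}\mathbb Z[t,t^{-1}]$ is faithfully flat over $A$ and $N_u = \Psi(N)[t,t^{-1}]$, $\Psi(N)$ is MCM over $A$. One way to see this is to localize at the prime $\m A[t,t^{-1}]$, which has the same height as $\m$, and use flat descent of depth. Alternatively, $\Psi(N)$ is a syzygy module of arbitrary order because $N$ is: write $N$ as an iterated syzygy in a graded free resolution of $N^*$ and apply the exact functor $\Psi$. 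Because $\Psi$ is exact and preserves free modules, it sends maps factoring through frees to maps factoring through frees, and it sends graded syzygy sequences to syzygy sequences. Hence it induces a functor $\CMS_\Z(\eR) \rt \CMS(A)$ commuting with $\Omega^{-1}$ up to natural isomorphism. It maps the standard triangles, which come from short exact sequences of MCM modules, to standard triangles, so $\Psi$ is triangulated. The quotient $\CMS_\Z(\eR)/\ker\Psi$ is a Verdier quotient because $\ker\Psi$ is a thick subcategory, and $\Psi$ factors through it as a triangulated functor $\ov{\Psi}$.

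It remains to show that $\ov{\Psi}$ is an equivalence, and this is the main obstacle. For density, given an MCM $A$-module $M$, I take $N_0 = \bigoplus_n (I^nM)\,t^n \subseteq M[t,t^{-1}]$, the ``Rees module'' of $M$. This is a graded $\eR$-module with $(N_0)_u = M[t,t^{-1}]$, but it need not be MCM. I replace it by a graded MCM approximation over the Gorenstein ring $\eR$, or more simply by $\Omega^{-d}_{\eR}\Omega^{d}_{\eR}(N_0)$ with $d = \dim \eR$, computed via graded resolutions. Applying $\Psi$ then gives $\Omega^{-d}\Omega^d M \cong M$ in $\CMS(A)$.

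For full faithfulness of the quotient functor, I would use the criterion that a triangulated functor $F$ induces an equivalence $\mathcal{C}/\ker F \rt \mathcal{D}$ if every morphism $\Psi(X) \rt \Psi(Y)$ is of the form $\Psi(s)^{-1}\Psi(f)$ with $\Psi(s)$ an isomorphism, that is, if $F$ has a ``calculus of fractions'' lift. Concretely, I would show that any $A$-map $g\colon \Psi(X) \rt \Psi(Y)$ becomes, after multiplying by a high power $t^{n}$, the restriction of a degree-zero graded map $X \rt Y(n)$. This works because $\Hom_{\eR_u}(X_u,Y_u)_0 = \varinjlim_n \Hom_\eR(X, Y(n))_0$, with the transition maps given by multiplication by $u$. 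The map $u^n\colon Y \rt Y(n)$ has cone $Y/u^nY$-type objects, which are killed by $\Psi$ because $u$ is invertible after localization. Hence $\Psi(u^n)$ is an isomorphism, and $g = \Psi(u^n)^{-1}\Psi(f)$. The same direct-limit argument shows that a morphism $f$ with $\Psi(f) = 0$ stably becomes zero after composing with some $u^n$. Its cone therefore lies in the thick closure of $\ker\Psi$, which gives faithfulness in the Verdier quotient. The delicate points are making the direct-limit identification compatible with the stable Hom (maps factoring through projectives) and verifying that the cones of $u^n$ are genuinely objects of $\ker\Psi$ inside $\CMS_\Z(\eR)$, which requires replacing the cone by the MCM module fitting in the triangle.
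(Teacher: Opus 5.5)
Your proposal is correct, but for the key step it takes a different route from the paper.

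\textbf{What the paper does.} The paper defines $\Psi$ on acyclic complexes of graded free modules (invert $t^{-1}$, take degree $0$). It then computes $\Psi$ as $\eR(\Fc,M)\mapsto M$, $f\mapsto f_*$ by looking at sufficiently negative degrees. Density is proved exactly as you do, via an MCM approximation of $\eR(I,M)$. Fullness is obtained by lifting $f\colon M\to N$ to the MCM approximations of $\eR(I,M)\to\eR(I,N)$. Faithfulness of $\ov\Psi$ has three steps:
\begin{enumerate}
\item normalize the filtrations (Claim-1, which in effect inverts a power of $t^{-1}$);
\item replace $\eR(\Hc,E)$ in the quotient by the MCM approximation of $\eR(I,E)$;
\item note that if $f_*$ factors through a free $W$, then $\wh{f_*}$ factors through the free module $\eR(I,W)$.
\end{enumerate}

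\textbf{What you do instead.} You define $\Psi$ directly as an exact functor of Frobenius categories that preserves projectives. This gives the triangulated structure by the standard argument, and incidentally settles the naturality of $\Psi\Omega^{-1}\cong\Omega^{-1}\Psi$ that the paper's remark leaves open. You then get full faithfulness uniformly by viewing $\ov\Psi$ as inverting $u$. The paper's route produces explicit descriptions of $\Psi$ and $\ker\Psi$ that it reuses later; yours avoids all filtration bookkeeping.

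Your use of fractions is in fact necessary. The paper asserts that $\Psi$ itself is full, but its argument only lifts maps between the particular preimages it constructs, and $\Psi$ is not full in general. Take $A=k[[x,y]]/(xy)$, $I=\m$, $M=A/xA\cong k[[y]]$. Then $E=\eR(\m,M)$ is MCM, since $G_\m(M)=k[y]$ is MCM over $G_\m(A)$. A degree-zero map from $E$ to its shift $E'$ with $E'_n=E_{n+1}$ amounts to some $\phi\in\Hom_A(M,\m M)=y\,k[[y]]$. Every such $\phi$ factors through $A$, whereas $\sHom_A(M,M)=k\neq 0$.

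\textbf{Closing the two points you flag as delicate.}
\begin{itemize}
\item \emph{Cones of $u^n$.} $\Psi(u^n)$ is already an isomorphism at module level and $\Psi$ is triangulated, so the cone of $u^n$ lies in $\ker\Psi$ automatically.
\item \emph{Stable Homs.} Let $\pi\colon F\to Y$ be a graded free cover. The maps $X\to Y$ factoring through projectives are exactly $\pi_*\Hom_{\eR}(X,F)$. Localization is exact and $\Hom_{\eR}(X,F)_u=\Hom_{\eR_u}(X_u,F_u)$, so this submodule localizes to the corresponding one over $A[t,t^{-1}]$. Taking degree zero gives $\sHom_A(\Psi X,\Psi Y)\cong\varinjlim_n \sHom(X,Y(n))_0$ in your indexing, with transition maps given by $u$.
\end{itemize}
Fullness of $\ov\Psi$ then follows as you say. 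For faithfulness: if $\Psi(f)=0$ stably, then $u^n\circ f=0$ in $\CMS_\Z(\eR)$ for some $n$. So $f$ is killed by a morphism whose cone lies in $\ker\Psi$, which is exactly the vanishing criterion in a Verdier quotient. Note that it is the cone of $u^n$, not the cone of $f$, that lies in $\ker\Psi$, contrary to what your last sentence suggests.
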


\textbf{V:} Let $T = \bigoplus_{n \in \Z}T_n$ be a graded Noetherian ring. Let $\smod(T)$ denote the abelian category of finitely generated graded $T$-modules. Let $G_0(\smod(T))$ denotes its zeroth $K$-group.
Then there is an action of $\Z[u,u^{-1}]$ on $G_0(\smod(T))$  defined as $u [M] = [M[1]]$. If $T$ is Gorenstein then similarly there is a $\Z[u,u^{-1}]$ module structure on $G_0(\CMS_\Z(T))$ the  zeroth Grothendieck group of the stable category of all graded MCM $T$-modules.
Let $\Q(u)$ be the Quotient field of $\Z[u,u^{-1}]$. If $E$ is a $\Z[u, u^{-1}]$-module then let $E_{\Q(u)} = E \otimes_{\Z[u, u^{-1}]}\Q(u)$.
The analysis of the proof of Theorem \ref{fourth}  yields the following result.

\begin{theorem}
\label{fifth} Let $(A,\m)$ be a Gorenstein local ring and let $I \subseteq \m$ be an ideal such that $\eR_A(I)$ is Gorenstein. Then the MCM approximation functor \\ $X \colon \CMS_\Z(G_I(A)) \rt
\CMS_\Z(\eR_A(I))$ yields an isomorphism
$$G_0(\CMS_\Z(G_I(A))_{\Q(u)} \rt G_0(\CMS_\Z(\eR_A(I)))_{\Q(u)}.$$
\end{theorem}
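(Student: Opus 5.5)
We prove Theorem \ref{fifth} with the localization sequence for Grothendieck groups of Verdier quotients, applied to the functor $\Psi$ of Theorem \ref{fourth}. Write $R = \eR_A(I)$ and $G = G_I(A) = R/(t^{-1})$. The construction of $\Psi$ will be, up to stable isomorphism, $M \mapsto M/(t^{-1}-1)M$, i.e. inverting $t^{-1}$ and taking degree zero. So $\ker \Psi$ will be the thick subcategory of graded MCM $R$-modules $M$ with $M_{t^{-1}}$ free over $R_{t^{-1}} = A[t,t^{-1}]$.

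\textbf{Step 1: $\ker\Psi$ is the thick closure of the image of $X$.} If $N$ is a graded MCM $G$-module, then $X(N)$ sits in an exact sequence $0 \rt F \rt X(N) \rt N \rt 0$ with $F$ graded free over $R$. Since $t^{-1}N = 0$, inverting $t^{-1}$ shows $X(N)_{t^{-1}}$ is free, so $X(N) \in \ker\Psi$. Conversely, let $M \in \ker \Psi$. The exact sequence $0 \rt M(-1) \xrightarrow{t^{-1}} M \rt M/t^{-1}M \rt 0$ gives a triangle
$$M(-1) \rt M \rt X(M/t^{-1}M) \rt \Omega^{-1}M(-1)$$
in $\CMS_\Z(R)$, using that $M/t^{-1}M$ is MCM over $G$ and its MCM approximation over $R$ is $M$ modulo free summands. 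One then argues that $M$ lies in $\thick(\operatorname{im} X)$. I expect the cleanest route is to use that $M_{t^{-1}}$ free means some power $t^{-n}$ annihilates $\sHom(M,M)$ in a suitable sense, and to iterate the triangle above.

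\textbf{Step 2: $K$-theory of the Verdier quotient.} The sequence $\ker\Psi \rt \CMS_\Z(R) \rt \CMS(A)$ yields the exact sequence
$$G_0(\ker\Psi) \rt G_0(\CMS_\Z(R)) \rt G_0(\CMS(A)) \rt 0$$
(Thomason's localization for triangulated Grothendieck groups). The $\Z[u,u^{-1}]$ action passes through it, with $u$ acting trivially on $G_0(\CMS(A))$, since $\Psi(M(1)) \cong \Psi(M)$. Hence $G_0(\CMS(A))$ is a quotient of $E/(u-1)E$, so it is killed by $u-1$. Since $u-1$ becomes invertible in $\Q(u)$, we get $G_0(\CMS(A))_{\Q(u)} = 0$. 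Localization is exact, so $G_0(\ker\Psi)_{\Q(u)} \rt G_0(\CMS_\Z(R))_{\Q(u)}$ is surjective. By Step 1, the classes $[X(N)]$ generate $G_0(\ker\Psi)$, which gives surjectivity of $X_*\otimes\Q(u)$.

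\textbf{Step 3: injectivity.} The plan is to construct a left inverse after tensoring with $\Q(u)$, using reduction mod $t^{-1}$. Define $\rho \colon G_0(\CMS_\Z(R)) \rt G_0(\CMS_\Z(G))$ by $[M] \mapsto [M/t^{-1}M]$. This is well defined on triangles, since $t^{-1}$ is regular on MCM modules and reduction preserves short exact sequences and sends free modules to free modules. From $0\rt F\rt X(N)\rt N\rt0$, reducing mod $t^{-1}$ gives $\Tor$-terms: $\Tor_1^R(N,G) = N(-1)$ and $N\otimes G = N$. Hence
$$\rho X[N] = [N] - [\Omega_G N(-1)] = (1 + u^{-1})[N] \ \text{(up to sign conventions)},$$
using $[\Omega N] = -[N]$. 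Since $1+u^{-1}$ is invertible in $\Q(u)$, $\rho\otimes\Q(u)$ is a left inverse of $X_*\otimes\Q(u)$ up to a unit, which gives injectivity.

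The main obstacle is Step 1, together with pinning down the precise form of $\ker\Psi$. Getting the Tor bookkeeping in Step 3 right (twists and signs) is routine but must yield a nonzero-divisor in $\Z[u,u^{-1}]$.
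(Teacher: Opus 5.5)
The gap is Step 1. All of your surjectivity rests on it, and you do not actually prove it. Two things are wrong as written.

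First, the justification for your triangle is false. The MCM approximation of $M/t^{-1}M$ over $R$ is not $M$ modulo free summands. Since $\deg t^{-1}=-1$, the relevant sequence is $0\rt M(1)\xrightarrow{t^{-1}}M\rt M/t^{-1}M\rt 0$. Then $X(M/t^{-1}M)$ is the cone of $t^{-1}\colon M(1)\rt M$, namely the pushout of $0\rt M(1)\rt F\rt\Omega^{-1}M(1)\rt 0$ along $t^{-1}$. That is why the triangle holds. It is not $M$ in general: when $t^{-1}\colon M(1)\rt M$ factors through a free module, this cone is $M\oplus\Omega^{-1}M(1)$.

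Second, even granting $\ker\Psi=\thick(\operatorname{im}X)$, the claim ``the classes $[X(N)]$ generate $G_0(\ker\Psi)$'' does not follow. A thick closure contains direct summands, and the class of a summand need not lie in the subgroup generated by the classes of the generators. The paper avoids both issues with the sharper Proposition~\ref{utah}. Any $E\in\ker\Psi$ is $\eR(\Fc,A^m)$, and after shifting $\Fc$ there is an exact sequence $0\rt R^m\rt E\rt C\rt 0$ with $t^{-s}C=0$, so $E\cong X_C$ itself. D\'evissage along $C\supseteq t^{-1}C\supseteq\cdots$, together with $G_0(\CMa(G))\cong G_0(\smod(G))$, then gives the generation you need.

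That said, the triangle you wrote down repairs everything at once. Taking classes in $M(1)\xrightarrow{t^{-1}}M\rt X(M/t^{-1}M)\rt\Omega^{-1}M(1)$ gives $X_*[M/t^{-1}M]=(1-u)[M]$ for \emph{every} $M\in\CMS_\Z(R)$, with $M/t^{-1}M$ a graded MCM $G$-module. In other words, $X_*\rho=1-u$ on $G_0(\CMS_\Z(R))$.

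Your Step 3 is the paper's injectivity argument, stated more cleanly as a homomorphism $\rho$ rather than through Thomason's lemma. It gives $\rho X_*=1-u$ on $G_0(\CMS_\Z(G))$. The correct constant is $1-u$, not $1+u^{-1}$: $\Tor_1^R(N,G)=N(1)$ enters through $\Omega_G^{-1}$ and so contributes $-u[N]$.

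Since $1-u$ is a unit in $\Q(u)$, $X_*\otimes\Q(u)$ is an isomorphism with inverse $(1-u)^{-1}(\rho\otimes\Q(u))$. With this route, neither $\Psi$, nor $\ker\Psi$, nor the localization sequence of Step 2 is needed.
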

We use Theorem \ref{fifth} to show that in many cases $G_0(\CMS_\Z(\eR_A(I)))_{\Q(u)}$ is a finite dimensional $\Q(u)$-vector space.

\textbf{VI:}
Let $(A,\m)$ be a Gorenstein local ring and let $I$ be an $\m$-primary ideal with $\eR(I)$ Gorenstein. Let $\wh{A}$ be the completion of $A$ and let
 $\wh{\eR(I)}= \bigoplus_{n \in \Z}I^n\wh{A} = \eR(I)\otimes_A \wh{A}$ be the extended Rees algebra of the ideal $I\wh{A}$.
 We have a natural functor $\Phi \colon \CMS_\Z(\eR(I)) \rt \CMS_\Z(\wh{\eR(I)})$ defined by sending $E \rt E\otimes_A \wh{A} = E\otimes_{\eR(I)}\wh{\eR(I)}$.
 Set $\CMS^0_\Z(\eR(I)) $ to be the thick subcategory of $\CMS_\Z(\eR(I))$ consisting of graded MCM $\eR(I)$-modules which are free on punctured homogeneous spectrum of $\eR(I)$.
\begin{theorem}
\label{equi}(with hypotheses as in \textbf{VI}).  $\Phi$ induces an equivalence  $$\Phi^\sharp \colon \CMS^0_\Z(\eR(I)) \rt \CMS^0_\Z(\wh{\eR(I)}).$$
\end{theorem}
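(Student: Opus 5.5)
We need to show that $\Phi^\sharp$ is well defined, fully faithful and essentially surjective. Throughout write $R = \eR(I)$ and $\wh{R} = R\otimes_A \wh{A}$, and let $\mathfrak{M} = \m \oplus \bigoplus_{n\neq 0} I^n t^n$ be the unique homogeneous maximal ideal of $R$.

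\textbf{Setup.} Since $I$ is $\m$-primary, $R/\mathfrak{M}^j$ has finite length for every $j$. Moreover $\wh{R}$ is the completion of $R$ along $\m R$ taken degreewise, and $\m R$ and $\mathfrak{M}$ have the same radical up to homogeneous components. Hence $R\to \wh{R}$ is a flat graded map with $\wh{R}/\mathfrak{M}\wh{R} = R/\mathfrak{M}$. Consequently base change preserves graded MCM modules and graded free modules, so $\Phi$ is a well-defined triangulated functor. $\wh{R}$ is Gorenstein because the fibres of $R\to\wh R$ are regular; $A$ is excellent-like enough, or one can argue that $\wh{R}_{\mathfrak{M}}$ is the completion of $R_{\mathfrak{M}}$ at the homogeneous localization level.

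\textbf{Well defined on $\CMS^0_\Z$.} Suppose $E$ is free on the punctured homogeneous spectrum. Then $\underline{\Hom}_R(E,E)$ and, more generally, all $\sHom(E,F)$ with $F$ graded MCM have finite length, since they are supported only at $\mathfrak{M}$. The same holds for $E\otimes\wh A$, because every homogeneous prime $Q\neq \mathfrak M\wh R$ of $\wh{R}$ contracts to a homogeneous prime of $R$ different from $\mathfrak{M}$, by flatness and the equality of maximal ideal fibres.

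\textbf{Full faithfulness.} For $E,F$ in $\CMS^0_\Z(R)$ we have $\Hom_{\wh R}(\wh E,\wh F)_0 = \Hom_R(E,F)_0\otimes_A\wh A$ by flatness and finite presentation. The same holds for the submodule of maps factoring through projectives, since $P(E,F) = \image(\Hom(E,G)\to\Hom(E,F))$ for a graded free cover $G\to F$, and images commute with flat base change. So $\sHom_{\wh R}(\wh E,\wh F)_0 \cong \sHom_R(E,F)_0\otimes_A\wh A$. The latter module is of finite length: it is $\m$-torsion, being killed by a power of $\mathfrak{M}$ and hence by a power of $\m$. A finite length $A$-module $N$ satisfies $N\otimes\wh A = N$, so $\Phi^\sharp$ is fully faithful.

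\textbf{Essential surjectivity (main obstacle).} Let $X\in\CMS^0_\Z(\wh R)$. I would first try to descend $X$ itself by representing it as a cokernel of a graded matrix over $\wh R$ and approximating. Because $X$ is free on the punctured spectrum, $\Ext^1_{\wh R}(X,-)$ and the stable endomorphisms of $X$ are annihilated by $\mathfrak{M}^c$ for some $c$. A graded version of the Guralnick / Elkik "$\m$-adically close presentations give isomorphic modules" argument then shows the following: if $X$ has presentation matrix $\phi$ and $\psi$ is a matrix over $R$ with $\phi\equiv\psi \bmod \mathfrak{M}^N\wh R$ for $N\gg c$, then $\coker\psi\otimes\wh A\cong X$ up to free summands. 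Such a $\psi$ exists because $R\to\wh R$ induces isomorphisms $R/\mathfrak{M}^N\cong \wh R/\mathfrak{M}^N\wh R$; this holds degreewise since each graded piece is finite length over $A$ modulo $\mathfrak M^N$.

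An alternative, cleaner route, which is the one I would pursue if the approximation bookkeeping gets messy, is a thick subcategory argument. Take a graded syzygy $Z$ over $R$ of a high twist-shifted module $R/\mathfrak{M}^N(j)$, and note that $\wh Z$ is the corresponding syzygy over $\wh R$. Then show that $X$ is a direct summand of a finite extension built from the $\wh{Z(j)}$'s, since every such $X$ lies in the thick subcategory generated by syzygies of finite length modules. Full faithfulness lets us lift idempotents and extensions: triangles lift because $\Phi^\sharp$ is fully faithful and triangulated, so the cone of a lifted map is a preimage. For summands one needs idempotent completeness, which holds here because $\sHom$ in degree $0$ between objects of $\CMS^0_\Z(R)$ is a finite length, hence complete, $A$-module, so idempotents lift. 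Thus the essential image is a thick subcategory containing the generators, and therefore everything. I expect the generation statement, namely that $\CMS^0_\Z(\wh R)$ is the thick closure of high syzygies of graded finite length modules, to be the crux. I would prove it by induction on the length of $\operatorname{Ext}$-annihilation, mimicking the ungraded argument from part I \cite{P-stable}.
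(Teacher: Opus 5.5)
\textbf{Gap: essential surjectivity.} Your full faithfulness argument is correct and is the paper's. Degree-zero stable Hom commutes with the flat base change $A\rt\wh{A}$ and has finite length, so it is unchanged.

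Essential surjectivity is where the proof fails. Neither route uses anything specific to the extended Rees algebra of an $\m$-primary ideal. Each, if valid, would prove verbatim that $\CMS^0(A)\rt\CMS^0(\wh{A})$ is dense for every Gorenstein local ring $A$. That is false without the Henselian hypothesis, which is why Lemma \ref{A-K} needs the Karoubi envelope.

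For a counterexample, take $A=k[x,y]_{(x,y)}/(y^2-x^2-x^3)$ with $\operatorname{char}k\neq 2$. Then $\wh{A}=k[[x,y]]/(uv)$ with $u=y-x\sqrt{1+x}$ and $v=y+x\sqrt{1+x}$. The module $\wh{A}/(u)$ lies in $\CMS^0(\wh{A})$ and has ranks $1$ and $0$ at the two minimal primes of $\wh{A}$. Any $\wh{N}\oplus\wh{A}^b$ has equal ranks there, since $A$ is a domain. So $\wh{A}/(u)$ is not stably a completion.

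\emph{Route 1.} Truncating $\sqrt{1+x}$ gives $\psi=y-xp(x)\in A$, congruent to $u$ modulo any prescribed power of $\m$. Yet $\coker\psi=A/(\psi)$ has finite length. Closeness of presentation matrices does not control the cokernel. Elkik-type descent uses the Henselian property to approximate a whole resolution, i.e.\ to solve equations, and that property is not available here.

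\emph{Route 2.} Finite length of degree-zero stable Homs does not give idempotent completeness. A stable idempotent splits only if it lifts to an idempotent of $\Hom_R(E\oplus F,E\oplus F)_0$ for some free $F$, and that ring is not of finite length. Stable Homs in $\CMS^0(A)$ also have finite length. If that implied idempotent completeness, then Lemma \ref{krs-equi} would make $\CMS^0(A)\rt\CMS^0(\wh{A})$ dense, since that functor is an equivalence up to direct summands (as used in Lemma \ref{A-K}). This contradicts the example. The graded generation statement you call the crux is also unproved, but the argument breaks before it matters.

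\textbf{The missing idea.} What you need is the structural fact that makes the graded case different.
\begin{enumerate}
\item By Lemma \ref{rain}, every $E\in\CMS^0_{\Z}(\wh{\eR})$ is $\eR(\Fc,W)$ with $W$ a free $\wh{A}$-module.
\item After a shift, $\Fc_n=W$ for $n<0$. Proposition \ref{utah} then identifies $E$ with the MCM approximation $X_M$ of $M=\coker(\wh{\eR}^m\rt E)$, a graded module killed by some $t^{-s}$.
\item Since $I$ is $\m$-primary, each graded piece $I^n/I^{n+s}$ of $\eR/(t^{-s})$ has finite length. Hence $\eR/(t^{-s})\cong\wh{\eR}/(t^{-s})$, and $M$ is already an $\eR$-module.
\item The MCM approximation $Y$ of $M$ over $\eR$ satisfies $Y\otimes_{\eR}\wh{\eR}\cong E$ in $\CMS_{\Z}(\wh{\eR})$ by flat base change. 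By faithful flatness, $Y$ is free on the punctured homogeneous spectrum.
\end{enumerate}
This gives density directly, with no matrix approximation and no idempotent splitting.

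A minor point: $\wh{\eR}$ is Gorenstein not because of regular formal fibres, which can fail for non-excellent $A$. It is Gorenstein because $G_{I\wh{A}}(\wh{A})=G_I(A)$.
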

We note that it is particularly difficult to construct equivalences of triangulated categories in commutative algebra. Theorem \ref{equi} constructs a large class of equivalences.

\textbf{VII:}
Recall a Noetherian ring $R$ is said to have $R_i$ property of Serre if $R_P$ is regular for all primes $P$ with $\htt P \leq i$. We prove:
\begin{theorem}
\label{ri}
Let $(A,\m), (B,\n) $ be  Gorenstein local rings. Suppose we have an equivalence $\Phi \colon \CMS(A) \cong \CMS(B)$ as triangulated categories. Assume $A$ (and so $B$) is not a hypersurface ring. Then $A$ satisfies $R_i$ if and only if $B$ satisfies $R_i$.
\end{theorem}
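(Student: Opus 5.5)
We would characterize $R_i$ intrinsically in terms of the triangulated category $\CMS(A)$, together with $\dim A$, which is a stable invariant by Theorem \ref{first}(1) since $A$ and $B$ are not hypersurfaces. The intrinsic tool is the support of objects. For $M \in \CMa(A)$ let $\Supp^{st}(M)=\{P \in \Spec A : M_P \text{ not free}\}$, a closed subset of the singular locus. For a prime $P$ we have $P\in \operatorname{Sing}(A)$ if and only if $P\in\Supp^{st}(M)$ for some MCM $M$: if $A_P$ is singular, take $M=\Omega^d_A(A/P)$ with $d=\dim A$, whose localization $M_P=\Omega^d(k(P))\oplus$ free is not free since $\projdim k(P)=\infty$. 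Hence $A$ satisfies $R_i$ if and only if every MCM $M$ satisfies $\dim \Supp^{st}(M) < \dim A - i$, where for equidimensional catenary local Gorenstein rings $\htt P = \dim A - \dim A/P$. So it suffices to show that $\dim\Supp^{st}(M)$ is preserved by $\Phi$, that is, $\dim \Supp^{st}_A(M)=\dim\Supp^{st}_B(\Phi M)$.

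To make $\dim \Supp^{st}$ categorical we would use thick subcategories. Set $\CMS_j(A)$ to be the full subcategory of MCM modules $M$ with $\dim\Supp^{st}(M)\le j$. This is a thick subcategory, closed under shifts, cones and summands, since supports behave well along short exact sequences of MCM modules and freeness is detected locally. We would characterize $\CMS_j(A)$ by induction on $j$ through a purely categorical property. The base case is $\CMS_0(A)$, the modules free on the punctured spectrum. These are exactly the objects $M$ with $\underline{\Hom}(M,N)$ of finite length for all $N$. Finite length of a Hom set, however, is not a triangulated notion, so it must be replaced. A natural replacement is: $M\in\CMS_0$ if and only if for every $N$ the graded module $\bigoplus_n \underline{\Hom}(M,\Omega^{-n}N)$ is annihilated by a power of the maximal ideal of the center. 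Alternatively, and more robustly, we would use the Balmer/Stevenson-type fact that thick subcategories of $\CMS(A)$ that are "supported" correspond to specialization-closed subsets of $\operatorname{Sing}A$. This identifies $\CMS_0$ with the minimal nonzero thick tensor-like pieces, and more generally ties the chain $\CMS_0\subset \CMS_1\subset\cdots$ to the lengths of chains of primes in $\operatorname{Sing}A$, which gives a categorical description of the Krull dimension of supports.

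The main obstacle is this categorical characterization of $\dim\Supp^{st}$ without assuming hypersurface or complete intersection structure, since the classification of thick subcategories is only known for complete intersections. A weaker route suffices here. By induction, $\dim\Supp^{st}(M)\le j$ should be equivalent to the Verdier quotient $\CMS(A)/\thick(M)$ retaining enough objects. More concretely, we would express it via localization: the Verdier quotient $\CMS(A)/\CMS_{j-1}(A)$ is equivalent, up to idempotent completion (Karoubi envelopes), to $\prod_{\dim A/P=j}$-type data built from $\CMS(A_P)$. This is a Krause/Orlov-style localization identifying the quotient $\CMS(A)/\CMS^{\le j-1}$ with a category of MCM modules over localizations. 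We would then show that, up to idempotent completion, nonzero objects of finite length in the quotient detect the primes of dimension exactly $j$. Since $\Phi$ preserves thick subcategories, Verdier quotients and Karoubi envelopes, the filtration is preserved. Finally, $\dim A=\dim B$ by Theorem \ref{first}(1) converts the condition on $\dim \Supp^{st}$ into the statement of $R_i$ for $B$. This last comparison is exactly where the non-hypersurface hypothesis enters.
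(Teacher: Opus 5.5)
\textbf{There is a genuine gap: the proposal never proves that $\Phi$ preserves $\dim \Supp^{st}$.}

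Your reduction is correct and is exactly the paper's Lemma \ref{pet}. Since $\sHom_A(M,M)_P\cong\sHom_{A_P}(M_P,M_P)$ vanishes iff $M_P$ is free, your $\Supp^{st}(M)$ equals $\Supp_A\sHom_A(M,M)$. Hence $R_i$ holds iff $\dim_A\sHom_A(M,M)\le d-i-1$ for every MCM $M$. You are also right that the non-hypersurface hypothesis is used only to get $\dim A=\dim B$ from Theorem \ref{first}(1).

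The step that remains, $\dim\Supp^{st}_A(M)=\dim\Supp^{st}_B(\Phi M)$, is only sketched:
\begin{enumerate}
\item The classification route is unavailable for general Gorenstein rings, as you concede.
\item The identification of $\CMS(A)/\CMS_{j-1}(A)$, up to idempotent completion, with data built from the $\CMS(A_P)$ is asserted, not proved.
\item Even granting it, your detecting criteria are not triangulated notions. ``Nonzero objects of finite length in the quotient'' and ``annihilated by a power of the maximal ideal of the center'' both refer to the $A$-module structure on Hom sets.
\end{enumerate}
So the sentence ``since $\Phi$ preserves thick subcategories, Verdier quotients and Karoubi envelopes, the filtration is preserved'' is precisely the missing step. $\Phi$ carries $\CMS_j(A)$ onto $\CMS_j(B)$ only once $\CMS_j$ has an intrinsic description, and you never give one.

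\textbf{The missing idea is much simpler and needs no thick subcategories.} Let $Z_M$ be the center of the ring $\sHom_A(M,M)$.
\begin{enumerate}
\item The structure map $\rho\colon A\to\sHom_A(M,M)$ lands in $Z_M$, and $Z_M$ is a finite $A$-module.
\item For a finite algebra $R\to S$ and a finite $S$-module $E$ one has $\dim_R E=\dim_S E$ (Proposition \ref{baby}). Applying this gives
\[
\dim\Supp^{st}(M)=\dim_A\sHom_A(M,M)=\dim_{Z_M}\sHom_A(M,M)=\dim Z_M .
\]
The last equality holds because $Z_M$ acts faithfully on a ring containing it.
\item The equivalence induces a ring isomorphism $\sHom_A(M,M)\cong\sHom_B(\Phi M,\Phi M)$. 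It therefore restricts to an isomorphism of centers $Z_M\cong Z_{\Phi M}$, compatible with the module structures.
\end{enumerate}
Hence $\dim\Supp^{st}_A(M)=\dim\Supp^{st}_B(\Phi M)$ for every $M$. This is the paper's argument: a purely ring-theoretic invariant of the stable endomorphism ring replaces the categorical characterization of the filtration $\CMS_0\subset\CMS_1\subset\cdots$ that your approach needs and does not supply.
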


\begin{remark}
Theorem \ref{ri} does not hold when $A$ is a hypersurface ring. For instance Kn\"orrer periodicity gives an equivalence of stable categories of \\ $A = \mathbb{C}[[X, Y]]/(Y^2)$ and
$B = \mathbb{C}[[X, Y, Z_1, Z_2]]/(Y^2 + Z_1^2 + Z_2^2)$. We note that $B$ is $R_1$ but $A$ is \emph{not} reduced.
\end{remark}
In part one of the paper we showed that if $\Phi \colon \CMS(A) \cong \CMS(B)$ is an equivalence then $A$ is an abstract complete intersection of codimension $c$ if and only if $B$ is an abstract complete intersection of codimension $c$. We say $A$ is a complete intersection on the punctured spectrum if $A_P$ is an abstract complete intersection for all prime $P \neq \m$.

We prove:
\begin{theorem}
\label{ci}
Let $(A,\m), (B,\n) $ be  Gorenstein local rings. Suppose we have an equivalence $\Phi \colon \CMS(A) \cong \CMS(B)$ as triangulated categories. Then $A$  is a complete intersection on the punctured spectrum of $A$ if and only if $B$ is a complete intersection on the punctured spectrum of $B$.
\end{theorem}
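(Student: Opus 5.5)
The plan is to characterize ``$A$ is a complete intersection on the punctured spectrum'' by a condition stated purely inside the triangulated category $\CMS(A)$, relative to the thick subcategory $\CMS^0(A)$ of MCM modules free on the punctured spectrum. The theorem then follows by transporting that condition along $\Phi$. The first step is to check that $\Phi$ carries $\CMS^0(A)$ onto $\CMS^0(B)$. I expect this to be intrinsic, either via part~1 of the paper \cite{P-stable} or via the description of $\CMS^0(A)$ as the objects $M$ for which $\sHom_A(M,N)$ has finite length for every $N$; if finite length is not intrinsic, one can use instead that $\CMS^0(A)$ consists of the objects $M$ such that $\sHom_A(M,-)$ is annihilated by a power of the maximal ideal of the centre.

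Second, I reduce the local condition to a statement about MCM $A$-modules. By Gulliksen and Avramov, $A_P$ is a complete intersection if and only if every finitely generated $A_P$-module has finite complexity. Every finitely generated $A_P$-module is a localization $N_P$ of a finitely generated $A$-module, and after taking MCM approximations only MCM $A$-modules are needed. Hence $A$ is a complete intersection on the punctured spectrum if and only if $\cx_{A_P}(M_P)<\infty$ for every MCM $A$-module $M$ and every $P\neq\m$.

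Third, I give the intrinsic characterization. I claim $A$ is a complete intersection on the punctured spectrum if and only if the following holds for every $M\in\CMS(A)$. There exist $c\ge 0$ and morphisms $\eta_1,\dots,\eta_c$ with $\eta_i\colon M_{i-1}\rt \Omega^{-2}M_{i-1}$, where $M_0=M$ and $M_i=\cone(\eta_i)$, such that $M_c\in \CMS^0(A)$. This condition is clearly preserved by $\Phi$, since $\Phi$ commutes with $\Omega^{-1}$ up to natural isomorphism, preserves cones, and preserves $\CMS^0$ by the first step.
\begin{enumerate}[\rm (a)]
\item For ``$\Leftarrow$'', localize at $P\neq\m$. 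Then $(M_c)_P$ is free, and each triangle $M_{i-1}\rt\Omega^{-2}M_{i-1}\rt M_i\rt$ gives $\beta_n(M_{i-1})\le \beta_{n-2}(M_{i-1})+\beta_n(M_i)$ over $A_P$. Descending induction on $i$ then shows that each $(M_i)_P$ has polynomially bounded Betti numbers, so in particular $\cx_{A_P}(M_P)<\infty$.
\item For ``$\Rightarrow$'', at each $P\neq\m$ the Eisenbud operators give such a sequence over $A_P$ making the final cone free. By the Avramov--Buchweitz finite generation of $\Ext^*_{A_P}(M_P,M_P)$ over the ring of operators, these operators can be taken inside the image of the graded centre, i.e.\ inside $\sHom_{A}(M,\Omega^{-2}M)_P$. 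After clearing denominators they lift to global morphisms.
\end{enumerate}

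The main obstacle is step (b): passing from local operators at each $P$ to finitely many global $\eta_i$ that work simultaneously on the whole punctured spectrum. I would handle this by Noetherian induction on the non-free locus of $M$, which is closed. Choose lifts working at the finitely many minimal primes of this locus (using prime avoidance to combine them), so that the cone has a strictly smaller non-free locus. Then iterate, appending further morphisms until the non-free locus lies inside $\{\m\}$.
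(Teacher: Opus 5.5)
Your argument is correct in outline, but it takes a genuinely different route from the paper.

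\textbf{The paper's route.} The paper never characterizes the property inside the category. It treats $\Ext^i_A(M,M)=\sHom_A(M,\Omega^{-i}M)$ as a module over the centre $Z_M$ of $\sHom_A(M,M)$. Claim-1 shows that, on the complete-intersection side, the lengths of $\Ext^n_A(M,M)_{\q}$ grow polynomially at every non-maximal prime $\q$ of $Z_M$; the cohomology operators are used only for this local growth. The paper then transports this growth along $Z_M\cong Z_{\Phi(M)}$. It compares lengths over $B_Q$ and over the semilocal ring $(Z_N)_Q$ by Proposition \ref{bella}. It finishes with the test module $N=\Omega^{\dim B}_B(B/Q)$ and Gulliksen's criterion. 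That proof is short, needs no local-to-global construction, and does not even use that $\Phi$ preserves $\CMS^0$.

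\textbf{Your route.} You prove an intrinsic triangulated characterization, a relative form of reducible complexity: every object reaches $\CMS^0(A)$ after finitely many cones of degree-two self-maps. This is conceptually stronger, since it is visibly invariant under any triangle equivalence preserving the subcategories $\CMS^0$, and lengths and centres never enter. The cost is Matsui's theorem, Eisenbud operators together with a Koszul-object argument, and a Noetherian induction.

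\textbf{Points to make explicit when you write out (b).}
\begin{enumerate}[\rm (1)]
\item Finite length at minimal primes carries the argument. At a minimal prime $P$ of the non-free locus, the modules $\sHom_A(M_{i-1},\Omega^{-2}M_{i-1})_P$ have finite length. This is what lets the operators of $\widehat{A_P}$ descend to $A_P$ when $A_P$ is only an abstract complete intersection. It is also what lets you kill cross-terms by taking high powers of prime-avoidance elements $w_j\in\bigcap_{i\neq j}P_i\setminus P_j$.
\item The locus does not shrink after a single cone. It shrinks only after all $\max_j\operatorname{codim}A_{P_j}$ cones are performed. Each morphism must be lifted from the current $M_{i-1}$; this is harmless because a unit multiple of a morphism has an isomorphic cone.
\item Freeness of the local Koszul object needs an argument, which the long exact sequence supplies. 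It shows $\Ext_{A_P}(M_P/\!\!/\chi_1,\kappa(P))$ is an extension of the kernel and cokernel of $\chi_1$, hence finitely generated over $\kappa(P)[\chi_2,\dots,\chi_c]$; iterate.
\end{enumerate}
In (a), the inequality should read $\beta_n(M_{i-1})\le\beta_{n-2}(M_{i-1})+\beta_{n+1}(M_i)$; this index slip is harmless.
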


\textbf{VIII:} Let $\C$ be a Krull-Remak -Schmidt (KRS) traingulated category and let $\D$ be a \emph{proper} thick subcategory of $\C$. Let $M \in \C$. Let $M = M_1^{a_1}\oplus \cdots \oplus M_n^{a_n}$ with $M_i$ indecomposable and $M_ i \ncong M_j$ for $i\neq j$; also $a_i \geq 1$. We define $\alpha_{\C, \D}(M)$ as follows:

(1) if each $M_i \in \D$ then set $\alpha_{\C, \D}(M) = 0$.

(2) if $M_{i_1}, \ldots, M_{i_r} \notin \D$ and $M_j \in \D$ for $j \neq i_l$ then set $\alpha_{\C, \D}(M) = a_{i_1} + \cdots + a_{i_r}.$

For $M \in \C$ define

$r(\C, \D, M) =  \{  t_s \mid t_s \colon N_s \rt E_s \rt M \rt N_s[1] \ \text{with} \ t_s  \ \text{a triangle and } \ N_s \in \D  \},$

and

$l(\C, \D, M) = \{ t_s \mid t_s \colon M \rt E_s \rt N_s \rt M[1] \ \text{with} \ t_s  \ \text{a triangle and }  N_s \in \D  \}.$

If $t_s \in r(\C, \D, M)$ with $t_s \colon N_s \rt E_s \rt M \rt N_s[1]$ then define \\ $\alpha_{\C, \D}(t_s) = \alpha_{\C, \D}(E_s)$. Analogously define $\alpha_{\C, \D}(t_s)$ for $t_s \in l(\C, \D, M)$.

Set $r_0(\C, \D, M) = \sup \{ \alpha_{\C, \D}(t_s) \mid t_s \in r(\C, \D, M) \}$. Also set

$l_0(\C, \D, M) = \sup \{ \alpha_{\C, \D}(t_s) \mid t_s \in l(\C, \D, M) \}$.

Note we are NOT claiming that $r_0(\C,\D, M)$ or $l_0(\C, \D, M)$ are finite.  We say that $(\C, \D)$ is \emph{rigid} pair if $r_0(\C,\D, M)$ and $l_0(\C, \D, M) $ are finite for all $M \in \C$.

The definition of rigid pair seems contrived. However they are abundant as the following result shows. Recall if $A$ is Henselian then $\CMS(A)$ is a KRS triangulated category. We show
\begin{theorem}
\label{rigid} Let $(A,\m)$ be a Henselian Gorenstein local ring and let $(B,\n)$ be another Gorenstein local ring (not necessarily Henselian). Let   $\Psi \colon \CMS(A) \rt \CMS(B)$ be a triangulated functor. Assume $\ker \Psi$  is a proper thick subcategory of $\CMS(A)$. Then $(\CMS(A), \ker \Psi)$ is a rigid pair.
\end{theorem}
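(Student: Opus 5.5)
The plan is to push every triangle through $\Psi$, where the $\D$-terms vanish, and then measure the remaining part by a numerical invariant on $\CMS(B)$ that is additive, zero exactly on free modules, and insensitive to free summands. The first Betti number $\beta^B_1(X) = \mu(\Omega_B X)$ of a MCM $B$-module $X$ should serve as this invariant.

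Set $\D = \ker \Psi$. Since $A$ is Henselian, $\CMS(A)$ is a KRS category, so $\alpha_{\C,\D}$ is well defined on objects. Fix $M \in \CMS(A)$ and a triangle $t_s \colon N_s \rt E_s \rt M \rt N_s[1]$ with $N_s \in \D$. Applying the triangulated functor $\Psi$ gives a triangle $\Psi(N_s) \rt \Psi(E_s) \rt \Psi(M) \rt \Psi(N_s)[1]$ in $\CMS(B)$. Because $\Psi(N_s) \cong 0$, the map $\Psi(E_s) \rt \Psi(M)$ is an isomorphism in $\CMS(B)$. Hence there are finitely generated free $B$-modules $F,G$ with $\Psi(E_s) \oplus F \cong \Psi(M) \oplus G$ as $B$-modules. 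The case of $l(\C,\D,M)$ is identical, using $M \rt E_s \rt N_s \rt M[1]$.

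Next, write $E_s = \bigoplus_i E_i^{a_i}$ as in the definition of $\alpha_{\C,\D}$. Since $\Psi$ is additive, we have $\Psi(E_s) \cong \bigoplus_i \Psi(E_i)^{a_i}$ in $\CMS(B)$. This means the two sides agree as $B$-modules up to free summands. For $E_i \in \D$, $\Psi(E_i)$ is free. For $E_i \notin \D$, $\Psi(E_i)$ is nonzero in $\CMS(B)$, so it is a non-free MCM $B$-module.

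Now use $\beta^B_1$. It is additive on direct sums, vanishes on free modules, and is unchanged by adding free summands. It is also at least $1$ on non-free modules, since the first syzygy of a non-free module in a minimal resolution is nonzero. Therefore
$$\alpha_{\C,\D}(t_s) = \sum_{E_i\notin \D} a_i \leq \sum_{E_i\notin\D} a_i\,\beta^B_1(\Psi(E_i)) = \beta^B_1(\Psi(E_s)) = \beta^B_1(\Psi(M)).$$
This bound depends only on $M$, so $r_0(\C,\D,M) \leq \beta^B_1(\Psi(M)) < \infty$, and likewise $l_0(\C,\D,M) \le \beta^B_1(\Psi(M))$. Hence $(\CMS(A),\ker\Psi)$ is a rigid pair.

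The step needing the most care is the choice of invariant. $B$ is not assumed Henselian, so we cannot count indecomposable summands on the $B$ side. We must instead use something like $\beta_1$, which is additive without KRS and blind to free summands. The remaining points to check are that $\beta_1$ is computed via minimal resolutions, so that it is additive and positive on non-free modules. Properness of $\D$ is only needed so that the notion of a rigid pair is defined as stated.
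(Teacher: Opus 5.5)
Your proof is correct and is essentially the paper's argument. The paper pulls back a triangle function on $\CMS(B)$, such as $\ell_B(\Tor^B_j(-,k))$, along the induced functor $\ov{\Psi}\colon \CMS(A)/\ker\Psi \rt \CMS(B)$ and applies Theorem \ref{infy}, whose key inequality $\alpha_{\C,\D}(E_s)\leq \xi([M])$ is exactly your computation with $\xi=\beta^B_1$; you simply bypass the Verdier quotient and use only the properties actually needed (additivity, invariance under adding free summands, positivity on non-free modules), which makes your version slightly more self-contained.
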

Using Theorem \ref{rigid} we give abundant examples of rigid pairs $(\C, \D)$. However there are examples of rigid pairs which occur outside the context of Theorem \ref{rigid}. We prove
\begin{theorem}\label{rigid-ci}
Let $(A,\m)$ be a Henselian complete intersection local ring of codimension $c \geq 2$. For $i = 1, \ldots, c$ let $\CMS^{\leq i}(A)$ be the thick subcategory consisting of MCM modules of complexity $\leq i$. Then for $i = 2, \ldots,c$ we have that $(\CMS^{\leq i}(A), \CMS^{\leq i-1}(A))$ is a rigid pair.
\end{theorem}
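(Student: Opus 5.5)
We must show that for $i \ge 2$ and every $M \in \CMS^{\leq i}(A)$, the numbers $r_0$ and $l_0$ for the pair $(\CMS^{\leq i}(A), \CMS^{\leq i-1}(A))$ are finite.

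\textbf{Step 1 (setting).} Since $A$ is Henselian, $\CMS(A)$ is KRS, and so is the thick subcategory $\C = \CMS^{\leq i}(A)$; thus $\alpha$ makes sense. Set $\D = \CMS^{\leq i-1}(A)$. It is a proper thick subcategory of $\C$: complexity is subadditive on triangles, so $\D$ is thick, and there are modules of complexity exactly $i$ (e.g. syzygies of $A/(\text{part of a system of parameters})$ over a suitable quotient).

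\textbf{Step 2 (the invariant).} For $E \in \C$ I use the theory of support varieties over complete intersections (Avramov--Buchweitz). Pass to $\wh{A}$; this is harmless because complexity and Betti numbers are unchanged, and decompositions over the Henselian ring $A$ remain compatible. The relevant objects are the cohomological operators $\chi_1,\dots,\chi_c$ and the graded module $\Ext^*_A(E,k)$ over $S = k[\chi_1,\dots,\chi_c]$, whose Krull dimension is $\cx_A(E)$. For an object of complexity $i$, define $e(E)$ to be the multiplicity (degree) of $\Ext^*_A(E,k)$ as an $S$-module of dimension $i$; equivalently, the normalized leading coefficient of the Betti growth, i.e. the leading coefficient of the quasi-polynomial $\beta_n(E) \sim e(E)\, n^{i-1}/(i-1)!$, averaged over parity. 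Set $e(E)=0$ if $\cx E \le i-1$. Then:
\begin{enumerate}[\rm (a)]
\item $e$ is additive on direct sums.
\item $e(E) \geq 1$ for each indecomposable $E \notin \D$, being a positive integer.
\item Whenever $N \to E \to M \to N[1]$ is a triangle with $N \in \D$, we have $e(E) = e(M)$.
\end{enumerate}
To prove (c), use the long exact sequence of $\Ext(-,k)$ coming from the triangle. Since it is realized by a short exact sequence of MCM modules up to free summands, the Betti numbers of $E$ differ from those of $M$ by at most $\beta_n(N)+\beta_{n\pm1}(N) = O(n^{i-2})$. Leading coefficients in degree $n^{i-1}$ therefore agree. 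To be careful with parity, I will use $\beta_n + \beta_{n+1}$, which is an honest polynomial for large $n$.

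\textbf{Step 3 (conclusion).} Write $E = \bigoplus E_j^{a_j}$. Then $\alpha(E) = \sum_{E_j\notin\D} a_j \le \sum a_j e(E_j) = e(E) = e(M)$. So $r_0(\C,\D,M) \le e(M) < \infty$. The dual triangle $M \to E \to N \to M[1]$ gives $l_0 \le e(M)$ by the same estimate. Hence the pair is rigid.

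\textbf{Main obstacle.} The hard part is the integrality and positivity claim (b), together with (c): the normalized leading coefficient must be a positive integer, or at least bounded below by a fixed positive constant, for every indecomposable of complexity exactly $i$. The first approach I would try is Hilbert--Serre applied to the finitely generated graded $S$-module $\Ext^*_A(E,k)\otimes$, with $S$ regraded so that the $\chi_j$ have degree $2$. The even and odd parts are then finitely generated over a standard graded polynomial ring, and each has a Hilbert polynomial whose leading coefficient is (degree)$/(\dim-1)!$ with the degree a positive integer whenever the dimension is $i$. This gives $e(E) \in \Z_{>0}$, with $e$ defined as the sum of the even and odd degrees. Additivity in (a) is clear, and the estimate in (c) then compares these degrees directly.
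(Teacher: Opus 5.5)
Your proposal is correct and is essentially the paper's argument. The paper uses the same invariant, $\xi(M)=\lim_n i!\,\beta_{2n}(M)/n^{i-1}$, shows via the short exact sequence realizing a triangle that it is a weak triangle function on $\CMS^{\leq i}(A)$ with kernel $\CMS^{\leq i-1}(A)$, and gets $\alpha(E)\le\xi(M)$ by passing to the Verdier quotient (Proposition \ref{pardesi} and Theorem \ref{infy}). You instead prove $e(E)=e(M)$ directly from the long exact $\Tor$ sequence and make integrality explicit through Hilbert--Serre on the even and odd parts of $\Ext^*_A(E,k)$, which also avoids needing that even and odd Betti numbers share a leading term.

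One aside is unjustified in general: that $\beta_n+\beta_{n+1}$ is eventually a polynomial in $n$. This is harmless, because you only use that its normalized leading coefficient exists, and the parity-separated Hilbert polynomials provide that.
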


\textbf{IX:}
There exists natural triangulated functors from $\CMS(A)$ to $\D(A)$. Here $\D(A)$ is the unbounded derived category of $A$-modules. These functors are constructed as follows.

Let $\apc(A)$ denote the
homotopy category of (unbounded) acyclic complexes of finitely  generated  free
$A$-modules
If $X \in \apc(A)$ then let $\Omega_0(X) = \coker(X^{-1} \rt X^0)$. We note that if $(A,\m)$ is a Gorenstein local ring then Buchweitz, \cite[4.4.1]{Bu},  constructs a triangle equivalence
$\Omega_0 \colon \apc(A) \rt \CMS(A)$. The inverse map $\Omega_0^{-1}$  takes $M$ to its complete resolution $\Fb_M$.  Let $X \in \CMS(A)$ be any MCM $A$-module. Let
$\Kc(A)$ denote the (unbounded) homotopy category of complexes of $A$-modules and let $q \colon \Kc(A) \rt \D(A)$ be the localization map.

Consider the functors
$$ \alpha_X^\prime \colon \apc(A) \rt \Kc(A) \ \quad \text{given by }  \ \mathbb{F} \rt \Fb\otimes X.$$
$$ \beta_X^\prime \colon \apc(A) \rt \Kc(A) \ \quad \text{given by }  \ \mathbb{F} \rt \Hom(\Fb,  X).$$
$$ \gamma_X^\prime \colon \apc(A) \rt \Kc(A) \ \quad \text{given by }  \ \mathbb{F} \rt \Hom(X, \Fb).$$
Finally consider the maps
$$\alpha_X = q \circ \alpha^\prime_X \circ \Omega_0^{-1}  \colon \CMS(A) \rt \D(A), $$
$$\beta_X = q \circ \beta^\prime_X \circ \Omega_0^{-1}  \colon \CMS(A) \rt \D(A), $$
$$\gamma_X = q \circ \gamma^\prime_X \circ \Omega_0^{-1}  \colon \CMS(A) \rt \D(A).$$
A natural question is to describe the kernels of these functors. We are unable to describe it for a general $X$. However when $X$ is free on the punctured spectrum of $A$ then we can prove:
\begin{theorem}
\label{tate} Let $(A,\m)$ be a Gorenstein local ring and let $X$ be an non-free MCM $A$-module which is free on the punctured spectrum of $A$. Then we have
\begin{enumerate}[\rm (1)]
\item
Assume $\dim A \geq 2$. The following assertions are equivalent:
\begin{enumerate}[\rm (i)]
\item
$M \in \ker \alpha_X$.
\item
$\Omega^n(M)\otimes X$ is an MCM $A$-module for all $n \in \Z$.
\item
$\depth \Omega^n(M)\otimes X  \geq 1$ for all $n \in \Z$.
\end{enumerate}
\item
Assume $\dim A \geq 3$. The following assertions are equivalent:
\begin{enumerate}[\rm (i)]
\item
$M \in \ker \beta_X$.
\item
$\Hom_A(\Omega^n(M),  X)$ is an MCM $A$-module for all $n \in \Z$.
\item
$\depth \Hom_A(\Omega^n(M),  X) \geq 3$ for all $n \in \Z$.
\end{enumerate}
\item
Assume $\dim A \geq 3$. The following assertions are equivalent:
\begin{enumerate}[\rm (i)]
\item
$M \in \ker \gamma_X$.
\item
$\Hom_A(X, \Omega^n(M))$ is an MCM $A$-module for all $n \in \Z$.
\item
$\depth \Hom_A(X, \Omega^n(M)) \geq 3$ for all $n \in \Z$.
\end{enumerate}
\end{enumerate}
\end{theorem}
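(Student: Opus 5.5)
Set $\Fb = \Fb_M$, the complete resolution of $M$, so that $\Omega^n(M) = \coker(\Fb^{-n-1}\rt \Fb^{-n})$ for every $n\in\Z$. Since $q$ kills exactly the acyclic complexes, $M\in\ker\alpha_X$ iff $\Fb\otimes X$ is exact. Likewise $M\in\ker\beta_X$ iff $\Hom(\Fb,X)$ is exact, and $M \in \ker\gamma_X$ iff $\Hom(X,\Fb)$ is exact. The common strategy is to compute homology locally: $X$ is free on the punctured spectrum, so for $P\neq\m$ each of these complexes localizes to a direct sum of copies of $\Fb_P$ or $\Hom(\Fb_P,A_P)$. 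Both of these are acyclic, the first because $\Fb$ is, the second because $\Fb$ is a complete resolution of an MCM module over a Gorenstein ring. Hence all homology modules have finite length, and the whole problem reduces to depth counting on the short exact sequences $0\rt \Omega^{n+1}M\rt \Fb^{-n}\rt \Omega^n M\rt 0$.

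\textbf{Part (1).} Tensor this sequence with $X$. Since $\Fb^{-n}\otimes X$ is a sum of copies of $X$, the homology of $\Fb\otimes X$ at the relevant spot is
$$H_n \cong \ker(\Omega^{n+1}M\otimes X \rt \Fb^{-n}\otimes X),$$
and this is a quotient of $\Tor_1(\Omega^nM,X)$. For (i)$\Rightarrow$(ii), exactness gives short exact sequences $0\rt \Omega^{n+1}M\otimes X\rt \Fb^{-n}\otimes X\rt \Omega^nM\otimes X\rt 0$ for all $n$. By the depth lemma, $\depth \Omega^{n}M\otimes X \ge \min(d, \depth\Omega^{n+1}M\otimes X -1)$. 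Iterating $d$ times in the positive direction, using $\Omega^{n+d}M\otimes X$, gives depth at least $\min(d,\depth(\Omega^{n+d}M\otimes X)-d)$, which is insufficient on its own. Instead I use that exactness for all $n$ makes $\Omega^nM\otimes X$ a $d$-th syzygy: the tail $\cdots\rt\Fb^{-n-1}\otimes X\rt\Fb^{-n}\otimes X\rt\cdots$ is an exact complex of MCM modules ending at it. Since $\Omega^{n}M\otimes X$ is the image of $\Fb^{-n+d}\otimes X$ inside this exact complex of MCM modules, repeated depth lemma gives depth $d$. (ii)$\Rightarrow$(iii) is trivial because $d\ge 2\ge 1$. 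For (iii)$\Rightarrow$(i), $H_n$ has finite length and is a submodule of $\Omega^{n+1}M\otimes X$, which has positive depth; hence $H_n=0$.

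\textbf{Parts (2) and (3).} I apply $\Hom(-,X)$ to the sequence. The homology of $\Hom(\Fb,X)$ is then $\Ext^1(\Omega^nM,X)$ up to indexing: we get $0\rt \Hom(\Omega^nM,X)\rt \Hom(\Fb^{-n},X)\rt \Hom(\Omega^{n+1}M,X)\rt \Ext^1(\Omega^nM,X)\rt 0$, and the finite length of $\Ext^1$ follows as above. For (iii)$\Rightarrow$(i), I split the four-term sequence into two short exact sequences with middle image $C$. The module $\Hom(\Fb^{-n},X)$ is MCM, so its depth is $d\ge3$. Hypothesis (iii) gives $\depth \Hom(\Omega^nM,X)\ge 3$, so $\depth C\ge 2$. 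Then $0\rt C\rt \Hom(\Omega^{n+1}M,X)\rt E\rt0$ with $\depth C\ge 2$ and $\depth\Hom(\Omega^{n+1}M,X)\ge 3$ forces $\depth E\ge 1$. Since $E$ has finite length, $E=0$. For (i)$\Rightarrow$(ii), exactness makes each $\Hom(\Omega^nM,X)$ a kernel in an exact complex of MCM modules extending infinitely to the right. So it is an arbitrarily high syzygy of MCM modules and is therefore MCM. Part (3) is identical using $\Hom(X,-)$: the homology is $\Ext^1(X,\Omega^{n+1}M)$, which has finite length, and $\Hom(X,\Fb^{i})$ is a sum of copies of $X^*$, which is MCM.

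The main obstacle is the (i)$\Rightarrow$(ii) direction. There I must check carefully that the infinite exact complex of MCM modules really yields depth $d$ for every kernel, which holds because the complex extends unboundedly on the side where the kernel sits as a high syzygy. I also need to get the indexing direction right for $\Hom$, which reverses arrows. The thresholds $\dim A\ge 2$ and $\dim A\ge 3$ enter only in the depth-lemma step of (iii)$\Rightarrow$(i).
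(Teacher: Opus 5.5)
Your proposal is correct and follows essentially the same route as the paper. You identify the homology of $\Fb\otimes X$, $\Hom(\Fb,X)$ and $\Hom(X,\Fb)$ with $\Tor_1$ and $\Ext^1$ terms, which have finite length because $X$ is free on the punctured spectrum, and you kill them by depth counting on $0\to\Omega^{n+1}M\to\Fb^{-n}\to\Omega^nM\to 0$ for (iii)$\Rightarrow$(i). You get (i)$\Rightarrow$(ii) by splicing the resulting short exact sequences into an exact complex of MCM modules, exactly as the paper does.

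The only slip is in wording, in part (1). The splice you need is $0\to\Omega^nM\otimes X\to\Fb^{-n+1}\otimes X\to\cdots\to\Fb^{-n+d}\otimes X\to\Omega^{n-d}M\otimes X\to 0$. So $\Omega^nM\otimes X$ is the $d$-th syzygy of the image of $\Fb^{-n+d}\otimes X$, not that image itself, and the depth lemma in the form $\depth K_j\ge\min(d,\depth K_{j-1}+1)$ then gives depth $d$.
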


The technique  to prove Theorem  \ref{tate} is also interesting. It proves the following result. Recall a ring $S$ has trivial Tor-vanishing if $\Tor^S_n(M. N) = 0$ for $n \gg 0$ implies that $M$ or $N$ has finite projective dimension. Analogously $S$ is said to have trivial Ext-vanishing if $\Ext^n_A(M, N) = 0$ for $n \gg 0$ implies that either $M$ has finite projective dimension or $N$ has finite injective dimension. If S is a Cohen-Macaulay local ring
having a canonical module, then both properties are equivalent, see \cite[3.2]{LM}.

\begin{theorem}
\label{tate-trivial} Let $(A,\m)$ be a Gorenstein local ring and let $X$ be an non-free MCM $A$-module which is free on the punctured spectrum of $A$. Assume $A$ has trivial Tor-vanishing property.  Let $M$ be an MCM $A$-module which is not free. Then we have
\begin{enumerate}[\rm (1)]
\item
Assume $\dim A \geq 2$. Then $\depth \Omega^n(M)\otimes X = 0$ for infinitely many $n > 0$.
\item
Assume $\dim A \geq 3$. Then
\begin{enumerate}[\rm (a)]
\item
$\depth \Hom_A(\Omega^n(M),  X) = 2$ for infinitely many  $n > 0$.
\item
$\depth \Hom_A(X, \Omega^n(M)) = 2$ for infinitely many  $n > 0$.
\end{enumerate}
\end{enumerate}
\end{theorem}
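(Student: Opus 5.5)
We argue by contradiction, using Theorem \ref{tate} together with the trivial Tor-vanishing hypothesis.

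\emph{Part (1).} Suppose $\depth \Omega^n(M)\otimes X \geq 1$ for all $n \gg 0$, say for $n \geq n_0$. The first step is to get depth for every $n \in \Z$. Replace $M$ by $N = \Omega^{n_0}(M)$. Then $\Omega^{n}(N) = \Omega^{n+n_0}(M)$ up to free summands. Free summands do not matter, since $A\otimes X = X$ is MCM and so has positive depth. Hence $\depth \Omega^n(N)\otimes X \geq 1$ for all $n \geq 0$.

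For negative $n$ I would argue directly. The key local fact is the following: since $X$ is free on the punctured spectrum, for any MCM $L$ the modules $\Tor_i^A(L,X)$ with $i \geq 1$ have finite length. Tensor $0 \rt \Omega(L) \rt F \rt L \rt 0$ with $X$. This gives $0\rt \Tor_1(L,X) \rt \Omega(L)\otimes X \rt F\otimes X$. So if $\depth \Omega(L)\otimes X \geq 1$, the finite-length module $\Tor_1(L,X)$ must vanish. Iterating over the degrees $n \geq 0$, we get $\Tor_i^A(N,X)=0$ for all $i \geq 1$, that is, $\Tor_i^A(M,X) = 0$ for $i \gg 0$.

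Now the trivial Tor-vanishing property applies: either $M$ or $X$ has finite projective dimension. Both are MCM, so by Auslander–Buchsbaum one of them is free. This contradicts the hypotheses that $M$ and $X$ are non-free. So we do not even need the negative $n$ or the full strength of Theorem \ref{tate}(1); the hypothesis $\dim A \geq 2$ is just the setting in which the statement is framed.

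\emph{Part (2).} Suppose $\depth \Hom_A(\Omega^n(M),X)\geq 3$ for $n \gg 0$. Since $\Omega^n(M)$ is MCM over a Gorenstein ring, $\Hom(\Omega^n M, X)$ has depth at least $2$, which explains the value $2$ in the statement. The plan is to show that depth $\geq 3$ forces $\Ext^1_A(\Omega^{n-1}(M),X)$, a finite-length module, to vanish.

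Apply $\Hom(-,X)$ to $0\rt\Omega^{n}M\rt F\rt\Omega^{n-1}M\rt0$. This gives
$$0\rt \Hom(\Omega^{n-1}M,X)\rt F^*\otimes X\rt \Hom(\Omega^n M,X)\rt \Ext^1(\Omega^{n-1}M,X)\rt 0.$$
Let $C$ be the image of $F^*\otimes X$ in $\Hom(\Omega^n M,X)$. Apply the depth lemma first to $0\rt\Hom(\Omega^{n-1}M,X)\rt F^*\otimes X\rt C\rt 0$: the first term has depth $\geq 3$ for large $n$ and the middle term is MCM of dimension $\geq 3$, so $\depth C\geq 2$. Then apply it to $0\rt C\rt \Hom(\Omega^n M,X)\rt E\rt0$, where $E = \Ext^1$ has finite length. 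If $E\neq 0$ then $\depth E = 0$, and the lemma gives $\depth C = 1$, a contradiction. Hence $\Ext^i(M,X)=0$ for $i\gg0$.

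By the cited equivalence \cite[3.2]{LM}, trivial Tor-vanishing implies trivial Ext-vanishing. So $M$ has finite projective dimension or $X$ has finite injective dimension; since both are MCM over a Gorenstein ring, one of them is free, a contradiction.

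Part (2)(b) is symmetric, using $\Hom(X,-)$ applied to $0\rt\Omega^{n+1}M\rt F\rt\Omega^n M\rt0$, and $\Ext^1(X,\Omega^{n+1}M)$ of finite length.

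\emph{Main obstacle.} The hardest step is the depth bookkeeping in (2): one must check precisely that depth $\geq 3$ kills the finite-length cokernel. This uses $\dim A\geq 3$, so that $F^*\otimes X$ has depth at least $3$.
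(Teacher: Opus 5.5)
\textbf{Parts (1) and (2)(a).} These are correct and follow the paper's route. You kill the finite-length module $\Tor^A_1(\Omega^nM,X)$, resp.\ $\Ext^1_A(\Omega^{n}M,X)$, by depth counting. You then dimension-shift in the first variable to get $\Tor^A_i(M,X)=0$, resp.\ $\Ext^i_A(M,X)=0$, for $i\gg0$, which contradicts trivial Tor/Ext-vanishing. Your depth bookkeeping in (2)(a) is more careful than the paper's.

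\textbf{Part (2)(b) is not symmetric, and this is a genuine gap.} Your depth count does correctly give $\Ext^1_A(X,\Omega^{j}M)=0$ for all $j\gg0$. But now the syzygies sit in the \emph{second} variable, and dimension shifting runs the wrong way. Over a Gorenstein ring, $\Ext^1_A(X,\Omega^{j}M)\cong\sHom_A(X,\Omega^{j-1}M)$, which is Tate cohomology of $(X,M)$ in degree $1-j\to-\infty$.

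Trivial Ext-vanishing is a statement about $\Ext^i_A(U,V)$ for fixed $U,V$ as $i\to+\infty$. Syzygy shifting does not convert your vanishing into that form. Nor does the duality $(-)^*=\Hom_A(-,A)$: it satisfies $\sHom_A(U,\Omega^{-k}V)\cong\sHom_A(V^*,\Omega^{-k}U^*)$, so it preserves the Tate degree. Equivalently, rewriting $\Hom_A(X,\Omega^nM)\cong\Hom_A((\Omega^nM)^*,X^*)$ only turns (b) into an (a)-type statement about the cosyzygies $\Omega^{-n}(M^*)$, again the wrong direction.

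\textbf{Missing ingredient.} What you need is Auslander--Reiten duality, which is available precisely because $X$ is free on the punctured spectrum. It says $\sHom_A(X,N)$ is Matlis dual to $\Ext^1_A(N,\tau X)$, where $\tau X\cong\Omega^{2-d}X$ in $\CMS(A)$ and $d=\dim A$. Taking $N=\Omega^{j-1}M$ gives
$$\ell_A\bigl(\Ext^1_A(X,\Omega^{j}M)\bigr)=\ell_A\bigl(\sHom_A(X,\Omega^{j-1}M)\bigr)=\ell_A\bigl(\Ext^{j+d-2}_A(M,X)\bigr).$$
So your vanishing yields $\Ext^i_A(M,X)=0$ for $i\gg0$, and you finish exactly as in (2)(a). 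The paper's written proof treats only (1) and (2)(a), so this step has to be supplied in any case.
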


We now describe in brief the contents of this paper. In section two we discuss a few preliminaries that we need. In section three we discuss some properties of the Karoubi envelope of a triangulated category. In section four we prove Theorem \ref{first}. In the next section we prove Theorem \ref{second}. In section six we give a proof of Theorem \ref{third}. In the next section we give a proof of Theorem \ref{fourth}. In section eight we give a proof of Theorem \ref{fifth}. In the next section we prove Theorem \ref{equi}. In section ten we prove Theorem \ref{ri}. In the next section we prove Theorem \ref{ci}. In section twelve we prove Theorem's \ref{rigid} and \ref{rigid-ci}. In the next section we give proofs of Theorem \ref{tate} and \ref{tate-trivial}.
\section{Preliminaries}
In this section we discuss a few preliminary facts that we need.

\s Let $R$ be a commutative ring. Throughout we work with $R$-linear additive categories and $R$-linear functors between them.  For this notion see \cite[p.\ 28]{ARS}. We say an $R$-additive category $\C$ is Hom-finite if for any $X, Y$ in $\C$ the $R$-module $\Hom_\C(X, Y)$ has finite length.

\begin{remark}
Let $R$ be a $S$-algebra. If $\C$ is an additive $R$ category then it is also an additive $S$-category. Also if $\phi \colon \C \rt \D$ is an $R$-functor then it is also a $S$-functor.
\end{remark}

\s Let $\C$ be a $R$-category. A map $e \colon M \rt M$ is said to be an \emph{idempotent}
if $e^2 = e$. We say an idempotent $e$ splits provided there
is a factorization $M \xrightarrow{p} K \xrightarrow{u} M$ such that $e = u\circ p$ and $p\circ u = 1_K$. We say that $\C$ is \emph{idempotent} complete if every idempotent in $\C$ splits.

\s Let $\C$ be any $R$-category. The \emph{Karoubi envelope}  of $\C$ is an idempotent complete $R$-category $K(\C)$ with a $R$-functor $\theta_\C \colon \C \rt K(\C)$ such that if $f \colon \C \rt \D$ is a $R$-functor where $\D$ is an idempotent complete $R$-category then there exists a (essentially unique) $R$-functor $K(f) \colon K(\C) \rt \D$ such that
$f = K(f)\circ \theta_\C$. Furthermore the pair $(\theta_\C, K(\C))$ is unique upto equivalence of $R$-categories, see \cite[I.6.10]{Ka}. We will need two facts. The map $\theta_\C$ is fully faithful and for any  $Y \in K(\C)$ there exists $X \in \C$ such that
$Y$ is a direct summand of $\theta_\C(X)$.

For the definition of Krull-Remak-Schmidt(KRS) category see \cite[1.1.3]{LW}.
We need the following easily proven fact.
\begin{proposition}\label{K-hom-f}
If $\C$ is a Hom-finite $R$-category then
\begin{enumerate}[\rm (1)]
\item
$K(\C)$ is a Hom-finite $R$-category.
\item
$K(\C)$ is a KRS category.
\end{enumerate}
\end{proposition}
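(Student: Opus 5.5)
I would use the standard model of the Karoubi envelope: objects of $K(\C)$ are pairs $(X,e)$ with $X \in \C$ and $e \colon X \rt X$ an idempotent. Morphisms $(X,e) \rt (Y,f)$ are the maps $g \colon X \rt Y$ in $\C$ with $g = f\circ g \circ e$, and $\theta_\C(X) = (X, 1_X)$. By the uniqueness of the pair $(\theta_\C, K(\C))$ up to $R$-equivalence quoted above, it suffices to prove both assertions for this model. Being Hom-finite and being KRS are invariant under $R$-linear equivalence.

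For (1), take objects $(X,e)$ and $(Y,f)$. The set $\Hom_{K(\C)}((X,e),(Y,f))$ equals $f\circ \Hom_\C(X,Y)\circ e$. This is an $R$-submodule of $\Hom_\C(X,Y)$, in fact a direct summand, since $g \mapsto fge$ is an idempotent $R$-linear endomorphism of $\Hom_\C(X,Y)$. A submodule of a finite length $R$-module has finite length, so $K(\C)$ is Hom-finite.

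For (2), I would use the criterion (cf.\ \cite[1.1.3]{LW} and the surrounding discussion there) that an additive category is KRS if it is idempotent complete and every object has a semiperfect, or local, endomorphism ring decomposition. Concretely:

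(a) $K(\C)$ is additive and idempotent complete; this is part of the definition of the Karoubi envelope.

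(b) For any $Y \in K(\C)$, the ring $\Lambda = \operatorname{End}(Y)$ is an $R$-algebra of finite length as an $R$-module, by (1). Hence $\Lambda$ is both Artinian and Noetherian as a ring, since its left and right ideals are $R$-submodules.

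(c) Artinian rings are semiperfect, so $1_Y$ decomposes as a finite sum of orthogonal primitive idempotents $e_1,\dots,e_n$ with each $e_i\Lambda e_i$ local. Since idempotents split in $K(\C)$, this gives $Y \cong Y_1\oplus\cdots\oplus Y_n$ with $\operatorname{End}(Y_i)\cong e_i\Lambda e_i$ local.

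(d) Objects with local endomorphism rings are indecomposable. The uniqueness part of Krull--Remak--Schmidt then follows from the standard Krull--Schmidt--Azumaya argument, which applies in any additive category where every object is a finite direct sum of objects with local endomorphism rings.

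Alternatively, one can bypass the semiperfect language. Finite length of $\operatorname{End}(Y)$ bounds the number of nonzero orthogonal idempotents, so a decomposition into indecomposables exists by induction on the length $\ell_R(\operatorname{End}(Y))$. For an indecomposable $Y_i$, the endomorphism ring $E$ has only trivial idempotents. Fitting's lemma for the Artinian ring $E$ says each element is a unit or nilpotent, so $E$ is local.

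The step I expect to need the most care is this passage from ``finite length $R$-algebra'' to ``local endomorphism rings of indecomposables''. The $R$-structure gives finite length only as an $R$-module, and one must note that ring ideals are $R$-submodules because the category is $R$-linear, so composition is $R$-bilinear. Once that is checked, Fitting's lemma applies: for $\phi \in E$ the chains $\phi^n E$ and $E\phi^n$ stabilize. Everything else is formal.
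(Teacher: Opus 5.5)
The paper gives no argument for this proposition; it is quoted as an ``easily proven fact''. Your proposal is a correct and complete version of the standard verification. For (1), $\Hom_{K(\C)}((X,e),(Y,f)) = f\,\Hom_\C(X,Y)\,e$ is an $R$-direct summand of a finite-length module. For (2), finite-length endomorphism rings are Artinian, hence semiperfect. Together with idempotent completeness of $K(\C)$, this gives decompositions into objects with local endomorphism rings, and the Krull--Schmidt--Azumaya argument gives uniqueness.

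One point to make explicit in your Fitting-lemma variant: you use that an indecomposable object $Y$ of $K(\C)$ has only trivial idempotent endomorphisms. This holds because idempotents split in $K(\C)$: a nontrivial idempotent $e$, together with $1-e$, would split into a nontrivial direct sum decomposition of $Y$.
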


\s Let $\C$ be a triangulated category. Then there exists a unique triangulated structure on $K(\C)$ such that $\theta_\C$ is a triangulated functor. Furthermore if $f \colon \C \rt \D$ is a $R$- triangulated functor where $\D$ is an idempotent complete triangulated $R$-category then there exists a (essentially unique) triangulated  $R$-functor \\ $K(f) \colon K(\C) \rt \D$ such that
$f = K(f)\circ \theta_\C$. For these facts see \cite[1.5]{BS}.

\s \label{center-eta} Let $(A,\m)$ be a Gorenstein local ring and let $M$ be a MCM $A$-module. There is a natural ring homomorphism $\eta_M \colon A \rt \sHom_A(M,M)$ defined by $q \rt \mu_a $ where $\mu_a \colon M \rt M$ defined by $\mu_a(m) = am$. We note that $\eta_M(a) \in Z_M = $ center of $\sHom_A(M,M)$. Furthermore via $\eta_M$ we get that $\sHom_A(M,M)$ (and so $Z_M$) is a finite $A$-module.

\s Let $(A, \m)$ be a Gorenstein local ring. Then $\CMS^0(A)$ denotes the thick subcategory of MCM $A$-modules $M$ such that $M_P$ is free for all primes  $P \neq \m$.
\s Let $(A,\m)$ be a local Noetherian ring. Let $I \subseteq \m$ be an ideal in $A$.
Let $M$ be a finitely generated $A$-module.
Recall an $I$-\textit{filtration} $\Fc = {\{M_n\}}_{n \in \Z}$ on $M$ is a
collection of submodules of $M$ with the properties
\begin{enumerate}
\item
$M_n \supseteq M_{n+1}$ for all $n \in \Z$,
\item
$M_n = M$ for all $n \ll 0$,
\item
$I M_n \subseteq M_{n+1}$ for all $n \in \Z$.
\end{enumerate}
If
$I M_n = M_{n+1}$ for all $n \gg 0$ then we say $\Fc$ is $I$-\emph{stable}.

Let $\eR_A(I) = \bigoplus_{n\in \Z} I^n$ be the extended Rees algebra of $A$ \wrt \ $I$.
If $\Fc = \{M_n\}_{n \in \Z}$ is an $I$-stable filtration on $M$, then  set
$\eR(\Fc ,M) = \bigoplus_{n \in \Z}M_n$ the \emph{extended Rees-module} of $M$
\emph{\wrt }\ $\Fc$.
 Notice that $\eR(\Fc,M)$ is a finitely generated graded $\eR_A(I)$-module. If $\Fc$ is $I$-adic then set $\eR(\Fc,M) = \eR(I, M)$.

\section{The Karoubi envelope}

Let $\C, \D$ be triangulated $R$-categories.
\s
Recall, a triangulated $R$-functor $F\colon \C\rt \D$ is called an \emph{equivalence up to direct summand's}
if it is fully faithful and any object $X \in \D$  is isomorphic to a direct summand of $F(Y)$ for
some $Y \in \C$.

We will need the following:
\begin{lemma}
\label{krs-equi}
Let  $\C, \D$ be triangulated KRS $R$-categories. If $f \colon \C \rt \D$ is a $R$-triangulated functor which is an equivalence upto direct summands then $f$ is dense. In particular $f$ is an equivalence.
\end{lemma}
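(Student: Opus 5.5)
The plan is to show that every object $X \in \D$ is isomorphic to $f(Y)$ for some $Y \in \C$. By hypothesis there is $Y \in \C$ and $X' \in \D$ with $f(Y) \cong X \oplus X'$. Since $\C$ is KRS, write $Y = Y_1 \oplus \cdots \oplus Y_n$ with each $Y_i$ indecomposable with local endomorphism ring $\Hom_\C(Y_i, Y_i)$. Then $f(Y) \cong f(Y_1) \oplus \cdots \oplus f(Y_n)$, because $f$ is additive.

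The first step is to check that each $f(Y_i)$ is indecomposable in $\D$. Since $f$ is fully faithful, $\Hom_\D(f(Y_i), f(Y_i)) \cong \Hom_\C(Y_i, Y_i)$ as rings, and this ring is local. An object whose endomorphism ring is local has no nontrivial idempotents, so it is indecomposable.

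Next, $\D$ is KRS, so $X$ and $X'$ decompose into finite direct sums of indecomposables. Then $f(Y_1)\oplus\cdots\oplus f(Y_n) \cong X \oplus X'$ gives two decompositions of the same object into indecomposables with local endomorphism rings. By uniqueness of such decompositions in a KRS category (the Krull--Remak--Schmidt--Azumaya theorem, \cite[1.1.3]{LW}), each indecomposable summand of $X$ is isomorphic to some $f(Y_i)$, with multiplicities matching. Hence $X \cong \bigoplus_{i \in S} f(Y_i) \cong f\bigl(\bigoplus_{i\in S} Y_i\bigr)$ for some subset $S$, where $S$ is understood with multiplicity; the case $X = 0$ is trivial. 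So $f$ is dense, and a fully faithful dense functor is an equivalence.

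The only point needing care is the KRS uniqueness. One must match the summands of $X$ against the list $f(Y_1), \dots, f(Y_n)$ injectively, so that distinct summands of $X$ are paired with distinct indices $i$. This is exactly what the Krull--Schmidt uniqueness statement provides. The triangulated structure plays no role beyond $f$ being additive, although if needed one can note that the quasi-inverse is automatically triangulated.
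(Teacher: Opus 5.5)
Your argument is correct, but it takes a different route from the paper. You decompose $Y$ into indecomposables in $\C$ and use full faithfulness to carry their local endomorphism rings over to $\D$. You then invoke Krull--Schmidt uniqueness in $\D$ to identify $X$ with $f$ of a sub-sum of $Y_1 \oplus \cdots \oplus Y_n$.

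The paper works directly with idempotents instead. It writes $X$ as the image of an idempotent $\phi$ on $f(Y)$ and lifts it to $\phi = f(\psi)$ by full faithfulness. Then $\psi$ is again idempotent, so it splits in $\C$ as $Y \to K \to Y$, because KRS categories have split idempotents. Applying $f$ to this splitting gives a second splitting of $\phi$ through $f(K)$. Uniqueness of the image of a split idempotent then yields $X \cong f(K)$.

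The paper's route is shorter and more general. It uses only that idempotents split in $\C$, and it needs nothing of $\D$ beyond additivity: no decompositions into indecomposables and no Krull--Schmidt--Azumaya theorem. Your proof, by contrast, genuinely uses the KRS property in both categories: existence of decompositions in $\C$ and uniqueness of decompositions in $\D$. What you get in return is an explicit preimage as a sub-sum of a fixed indecomposable decomposition of $Y$. The paper's argument recovers this anyway, since $K$ is a summand of $Y$ in the KRS category $\C$.
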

\begin{proof}
Let $Y \in \D$. Then $Y$ is a direct summand of $f(Z)$ for some $Z \in \C$. So there exists
an idempotent $\phi \colon f(Z) \rt f(Z)$ and a factorization $f(Z) \xrightarrow{p} Y \xrightarrow{u} f(Z)$ such that $\phi = u\circ p$ and $p\circ u = 1_Y$. As $f$ is fully faithful there exists $\psi \colon Z \rt Z$ such that $\phi = f(\psi)$. We note that $\psi$ is also an idempotent in $\C$. As $\C$ is KRS we get that $\psi$-splits. Let $Z \xrightarrow{q} K \xrightarrow{v} Z$ such that $\psi = v\circ q$ and $q\circ v = 1_K$.
It follows that $\phi$ also splits as $f(Z) \xrightarrow{f(q)} f(K) \xrightarrow{f(v)} f(Z)$ such that $\phi = f(v)\circ f(q)$ and $f(q)\circ f(v) = 1_f(K)$. By uniqueness of splittings
we get $Y \cong f(K)$. So $f$ is dense.
\end{proof}

As an application of Lemma \ref{krs-equi} we give a considerably simpler proof of Elkik's result \cite{E}  at least for MCM modules over Henselian Gorenstein rings.
\begin{corollary}
\label{elkik} Let $(A,\m)$be a Henselian  Gorenstein local ring. Let $M$ be a maximal \CM \ $\wh{A}$-module which is free on $\Spec^*(\wh{A})$. Then there exists maximal \CM \ $A$-module $N$ which is free on the punctured spectrum of $A$ such that $M \cong \wh{N}$.
\end{corollary}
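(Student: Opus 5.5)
We apply Lemma \ref{krs-equi} to the completion functor
$$f \colon \CMS^0(A) \rt \CMS^0(\wh{A}), \qquad N \mapsto \wh{N} = N\otimes_A \wh{A}.$$
Since $A \rt \wh{A}$ is flat, $f$ sends MCM modules to MCM modules and syzygies to syzygies. It preserves the triangles built from short exact sequences, so it is a triangulated $A$-linear functor on $\CMS(A)$. To see that it restricts as stated, we use that for $N$ MCM the following are equivalent: $N$ is free on the punctured spectrum; $\Ext^i_A(N,N)$ (equivalently $\sHom_A(N,\Omega^{-j}N)$) has finite length for $i\geq 1$; and $\wh{N}$ is free on $\Spec^*(\wh{A})$. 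The last equivalence holds because $\Ext^i_{\wh{A}}(\wh N,\wh N) = \Ext^i_A(N,N)\otimes\wh{A}$ and finite length is preserved and reflected by completion.

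\textbf{Hom-finiteness and full faithfulness.} For $M,N \in \CMS^0(A)$, the module $\sHom_A(M,N)$ has finite length, because it localizes to $\sHom_{A_P}(M_P,N_P)=0$ for $P\neq\m$. Also $\sHom_{\wh A}(\wh M,\wh N) = \sHom_A(M,N)\otimes_A \wh{A}$. This follows from flatness: $\Hom$ commutes with completion, and the submodule of maps factoring through projectives is the image of $\Hom(M,F)\otimes\Hom(F,N)$ with $F$ a free cover of $N$, which is also compatible with completion. A finite length module is unchanged by $-\otimes_A\wh{A}$, so $f$ is fully faithful. Both categories are Hom-finite, so by Proposition \ref{K-hom-f} their idempotent completions are KRS. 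Moreover $\CMS^0(\wh A)$ is already KRS, since $\wh A$ is complete. The Henselian hypothesis also makes $\CMS^0(A)$ KRS: endomorphism rings in $\CMa(A)$ of Henselian rings are semiperfect, and idempotents in $\sHom$ lift to $\Hom$.

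\textbf{Main obstacle: equivalence up to direct summands.} We must show that each $M\in \CMS^0(\wh A)$ is a direct summand of some $\wh N$ with $N\in\CMS^0(A)$. First try: $\wh{A}$ is Gorenstein with isolated-singularity-type support condition on $M$, so $M$ lies in the thick subcategory generated by the high syzygy $\Omega^d_{\wh A}(k)$, $d=\dim A$. This uses the standard fact that for MCM modules free on the punctured spectrum, the residue field's syzygy generates $\CMS^0$ up to summands, for instance via a Koszul/Hopkins–Neeman style argument or by building $M$ from $M\otimes$ Koszul complexes on a system of parameters annihilating $\sHom(M,M)$. Since $\Omega^d_{\wh A}(k) = \wh{\Omega^d_A(k)}$ and $\Omega^d_A(k)\in\CMS^0(A)$, the essential image of $f$ contains a generator. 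A fully faithful triangulated functor has essential image closed under cones and shifts up to isomorphism, so the image is a triangulated subcategory. Hence every object of the thick closure is a summand of an object in the image. Lemma \ref{krs-equi} then gives density, so $M\cong\wh N$ in $\CMS(\wh A)$, meaning $M\oplus \wh A^{a}\cong \wh N\oplus \wh A^{b}$. Finally, cancel free summands using Krull–Schmidt over $\wh A$ together with adding or removing free $A$-summands of $N$; this yields $M\cong\wh{N'}$ as modules.

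The step I expect to be hardest is the generation statement for $\CMS^0(\wh A)$ by the syzygy of $k$ up to summands. If a direct citation is unavailable, I would instead realize $M$ as a summand of $\wh N$ by choosing an $\m$-primary ideal $J$ with $J\sHom(M,M)=0$. Then $M$ is a summand of $\Omega^{d}(M/JM)$ up to free modules, and $M/JM=\wh{N_0}$ for the finite length $A$-module $N_0=M/JM$, so $M$ is a summand of $\wh{\Omega^d_A(N_0)}$.
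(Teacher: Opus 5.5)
Your overall plan is the same as the paper's. You apply Lemma~\ref{krs-equi} to the completion functor $\CMS^0(A)\rt\CMS^0(\wh A)$, use the Henselian hypothesis to make both categories KRS, and then cancel free summands over $\wh A$.

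The difference is in the step ``completion is an equivalence up to direct summands''. The paper takes this as a black box from \cite[3.2]{T}; you reprove it.
\begin{itemize}
\item Full faithfulness comes from $\sHom_{\wh A}(\wh M,\wh N)\cong\sHom_A(M,N)\otimes_A\wh A$ together with finite length.
\item The summand property comes from Takahashi's generation result $\thick(\Omega^d k)=\CMS^0$, which the paper itself quotes as \cite[4.3]{T-p} in Section~5.
\item You add the observation that the direct-summand closure of the essential image of a fully faithful triangulated functor is thick.
\end{itemize}
That argument is sound, and it makes the corollary nearly self-contained.

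Two steps need repair.

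\textbf{The restriction step.} To check that completion sends $\CMS^0(A)$ into $\CMS^0(\wh A)$, you use the claim that an MCM module is free on the punctured spectrum iff $\Ext^i(N,N)$ has finite length for all $i\geq 1$. The backward implication, localized at a non-maximal prime, says: an MCM module $L$ over a Gorenstein local ring with $\Ext^i(L,L)=0$ for all $i\geq1$ is free. That is the Auslander--Reiten conjecture, which is open for Gorenstein rings. Either fix works:
\begin{itemize}
\item Use $\sHom(N,N)$ instead, i.e.\ $j=0$. Its support is exactly the non-free locus, and you already show it commutes with completion.
\item Argue directly: a prime $Q\neq\wh\m$ of $\wh A$ contracts to some $P\neq\m$, and $\wh N_Q\cong N_P\otimes_{A_P}\wh A_Q$ is free.
\end{itemize}

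\textbf{The fallback.} It is false for an arbitrary $\m$-primary $J$ killing $\sHom(M,M)$. Take $A=k[[x]]/(x^3)$ and $M=A/(x^2)\cong\Omega k$. Then $\sHom_A(M,M)\cong k$, so $J=\m$ qualifies. But $M$ is not a stable summand of $\Omega^0(M/\m M)=k$.

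You must take $J=(x_1,\dots,x_d)$ generated by a system of parameters lying in $\operatorname{ann}\sHom(M,M)$. Then each $x_i$ is $M$-regular and zero in $\sHom(M,M)$. Hence the triangle $M\xrightarrow{x_i}M\rt C\rt\Omega^{-1}M$ splits, and $\Omega(M/x_iM)\cong M\oplus\Omega M$ stably. Iterating this (the annihilator of $\sHom(M,M)$ also kills $\sHom$ between the various $\Omega^jM$) gives $\Omega^d(M/JM)\cong\bigoplus_{i=0}^{d}(\Omega^iM)^{\binom{d}{i}}$ in $\CMS(\wh A)$. In particular $M$ is a stable summand of $\Omega^d(M/JM)$, and that module is the completion of an object of $\CMS^0(A)$.
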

\begin{proof}
We have nothing to show if $M$ is free. So assume $M$ is not free. By \cite[3.2]{T}, the map $\eta \colon \CMS^0(A) \rt \CMS^0(\wh{A})$ is an equivalence upto direct summands. As both $A, \wh{A}$ are Henselian both  $\CMS^0(A)$ and $ \CMS^0(\wh{A})$ are KRS categories. So by  \ref{krs-equi} we get that $\eta$ is dense. Thus there exists $E \in \CMS(A)$ such that $\wh{E} \cong M$ in $\CMS(\wh{A})$. So there exists free $\wh{A}$-modules $F, G$-modules such that $F \oplus M \cong G \oplus \wh{E}$ as $\wh{A}$-modules. We may assume that $E$ has no free summands. So $\wh{E}$  has no free summands. As $\wh{A}$ is a KRS category it follows that   $M \cong H\oplus \wh{E}$ for some free $\wh{A}$-module $H$. Say $H = \wh{A}^r$. Set
$N = A^r \oplus E$. Then $\wh{N} \cong M$. The result follows.
\end{proof}

\s If $\C$ is an additive category then let $K(\C)$ denote the Karoubi envelope of $\C$.
The next result is a crucial ingredient in the proof of  Theorem \ref{first}.

\begin{theorem}
\label{crucial}
Let $\C, \D$ be Hom-finite triangulated $R$-categories with $\D$ a KRS category. Suppose there exists $\eta \colon \C \rt \D$ an equivalence up to direct summands. Then  $K(\eta) \colon K(\C) \rt \D$ is an equivalence of triangulated $R$-categories.
\end{theorem}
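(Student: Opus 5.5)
The strategy is to reduce to Lemma \ref{krs-equi} applied to $K(\eta) \colon K(\C) \rt \D$. Since $\D$ is a KRS category, every idempotent in $\D$ splits, so $\D$ is idempotent complete. The universal property of the Karoubi envelope of a triangulated category (the fact from \cite[1.5]{BS} recalled above) then gives a triangulated $R$-functor $K(\eta) \colon K(\C) \rt \D$ with $\eta = K(\eta)\circ \theta_\C$. By Proposition \ref{K-hom-f}, $K(\C)$ is a Hom-finite KRS $R$-category, and it is triangulated by \cite[1.5]{BS}. So both categories are triangulated KRS $R$-categories, and it remains to show that $K(\eta)$ is an equivalence up to direct summands.

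\textbf{Essential surjectivity up to summands.} Let $Y \in \D$. By hypothesis $Y$ is a direct summand of $\eta(X) = K(\eta)(\theta_\C(X))$ for some $X \in \C$. Hence $Y$ is a summand of an object in the image of $K(\eta)$.

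\textbf{Full faithfulness.} I would use the standard model of $K(\C)$: objects are pairs $(X,e)$ with $e$ an idempotent on $X$, and
$$\Hom_{K(\C)}((X,e),(X',e')) = e'\Hom_\C(X,X')e.$$
The functor $K(\eta)$ sends $(X,e)$ to the image of the idempotent $\eta(e)$, which is a summand of $\eta(X)$ and exists because $\D$ is idempotent complete. On morphisms it is the restriction of the bijection
$$\eta \colon \Hom_\C(X,X') \rt \Hom_\D(\eta X, \eta X').$$
This bijection commutes with pre- and post-composition by $e$ and $e'$ (respectively $\eta(e)$ and $\eta(e')$). It therefore restricts to a bijection
$$e'\Hom_\C(X,X')e \rt \eta(e')\Hom_\D(\eta X,\eta X')\eta(e),$$
and the right-hand side is identified with $\Hom_\D(\mathrm{Im}\,\eta(e), \mathrm{Im}\,\eta(e'))$.

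The main obstacle is that $K(\eta)$ is only determined up to isomorphism by the universal property, while the description above uses a concrete model. I would handle this by noting that any two such factorizations are naturally isomorphic, and full faithfulness is invariant under natural isomorphism. One should also check that the triangulated structure on $K(\C)$ from \cite{BS} is compatible with this model; only the $R$-linear additive structure is needed for full faithfulness anyway.

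Having shown that $K(\eta)$ is a triangulated $R$-functor between triangulated KRS categories which is an equivalence up to direct summands, Lemma \ref{krs-equi} shows that it is dense, and hence an equivalence of triangulated $R$-categories.
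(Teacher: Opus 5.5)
Your proof is correct. Its skeleton matches the paper's: reduce, via Proposition \ref{K-hom-f} and Lemma \ref{krs-equi}, to showing that $K(\eta)$ is an equivalence up to direct summands, with the summand condition immediate from $K(\eta)\circ\theta_\C = \eta$.

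The difference is in the full faithfulness step. The paper never picks a model of $K(\C)$. It writes $U\oplus U_1=\theta(X)$ and $V\oplus V_1=\theta(Y)$, and notes that $K(\eta)$ is bijective on $\Hom_{K(\C)}(\theta X,\theta Y)$. From this it deduces that $K(\eta)$ is injective on each of the four pieces $\Hom(U,V)$, $\Hom(U,V_1)$, $\Hom(U_1,V)$, $\Hom(U_1,V_1)$. It then uses Hom-finiteness: the four length inequalities sum to an equality, so each is an equality and each injection is a bijection.

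You instead use the idempotent model $(X,e)$ and show directly that the bijection $\eta$ on $\Hom_\C(X,X')$ restricts to a bijection of corners $e'\Hom_\C(X,X')e \rt \eta(e')\Hom_\D(\eta X,\eta X')\eta(e)$. Surjectivity follows from $g=\eta(e')g\eta(e)=\eta(e'fe)$ together with faithfulness.

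What your route buys is that it is purely formal. It uses no finiteness in the full-faithfulness step, so it shows that any fully faithful additive functor into an idempotent complete category induces a fully faithful functor on the Karoubi envelope. The paper's route stays entirely within the abstract universal property, which avoids the model-independence bookkeeping you had to do (and did adequately). The price is the Hom-finiteness hypothesis.

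Even in the paper's abstract setting, the length count is dispensable. $K(\eta)$ is additive, so it respects the biproduct decomposition of $\Hom(\theta X,\theta Y)$ into the four components, and a bijection that is a direct sum of four maps is bijective on each summand.
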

\begin{proof}
By Proposition \ref{K-hom-f} and   Lemma \ref{krs-equi} it suffices to prove that $K(\eta)$ is an equivalence up to direct summands. Let $\theta \colon \C \rt K(\C)$ be the  canonical functor.

We first show that $K(\eta)$ is  fully faithful. Let $U, V \in K(\C)$. We want to show
the map $K(\C)_{U,V} \colon \Hom_{K(\C)}(U, V) \rt \Hom_\D(K(\eta)(U), K(\eta)(V))$ is an isomorphism of $R$-modules.

There exists  $X, Y \in \C$ such that $U$ is a direct summand of $\theta(X)$ and $V$ is a direct summand of $\theta(Y)$. So there exists $U_1,V_1$ in $K(\C)$ with $U\oplus U_1 = \theta(X)$ and
$V \oplus V_1 = \theta(Y)$.

We have a commutative diagram
\[
\xymatrix{
\
&\Hom_{K(\C)}(\theta(X), \theta(Y))
\ar@{->}[dr]^{K(\eta)(\theta(X), \theta(Y))}
 \\
\Hom_\C(X, Y)
\ar@{->}[rr]_{\eta(X,Y)}
\ar@{->}[ur]^{\theta(X,Y)}
&\
&\Hom_\D(\eta(X), \eta(Y))
}
\]

We note that $K(\eta)\circ \theta(X) = \eta(X)$. Similar assertion for $Y$. We note that $\eta(X,Y)$ and  $\theta(X,Y)$ are isomorphisms. So $K(\eta)(\theta(X), \theta(Y))$ is an isomorphism.

We have a commutative diagram
\[
  \xymatrix
{
 \Hom_{K(C)}(U, V)
\ar@{->}[r]^{ K(\C)_{U,V} }
\ar@{->}[d]^{i}
 & \Hom_\D(K(\eta)(U), K(\eta)(V))
\ar@{->}[d]^{j}
\\
 \Hom_{K(C)}(\theta(X), \theta(Y))
 \ar@{->}[r]^{ K(\eta)(\theta(X), \theta(Y))  }
 & \Hom_\D(\eta(X), \eta(Y))
\
 }
\]
Here $i,j$ are inclusions. Also $K(\eta)(\theta(X), \theta(Y))$ is an isomorphism. So $K(\C)_{U,V}$ is an inclusion of $R$-modules. In particular

\begin{equation*}
\ell_R(\Hom_{K(\C)}(U,V)) \leq
\ell_R (\Hom_\D(K(\eta)(U), K(\eta)(V))).  \tag{i}
\end{equation*}
Similarly we have
\begin{equation*}
\ell_R(\Hom_{K(\C)}(U,V_1)) \leq
\ell_R (\Hom_\D(K(\eta)(U), K(\eta)(V_1))),  \tag{ii}
\end{equation*}
\begin{equation*}
\ell_R(\Hom_{K(\C)}(U_1,V)) \leq
\ell_R (\Hom_\D(K(\eta)(U_1), K(\eta)(V))).  \tag{iii}
\end{equation*}
Finally we have
\begin{equation*}
\ell_R(\Hom_{K(\C)}(U_1,V_1)) \leq
\ell_R (\Hom_\D(K(\eta)(U_1), K(\eta)(V_1))).  \tag{iv}
\end{equation*}
If we add the left side of the above four inequalities we get \\ $\ell_R(\Hom_{K(\C)}(\theta(X), \theta(Y)))$. While if add up the right side of the four inequalities we obtain
$$\ell_R(\Hom_\D(K(\eta)(\theta(X), K(\eta)(\theta(Y))) = \ell_R(\Hom_\D(\eta(X), \eta(Y))).$$
We had earlier demonstrated that the natural map \\ $\Hom_{K(\C)}(\theta(X), \theta(Y))) \rt \Hom_\D(\eta(X), \eta(Y))$ is an isomorphism of $R$-modules. In particular they have the same length.
It follows that all the four inequalities are equalities. In particular $K(\C)_{U, V}$ is an isomorphism (as we had earlier proved it is an injection). Thus $K(\eta)$ is fully faithful.

Let $V \in \D$. Then by assumption $V$ is a direct summand of $\eta(U)$ for some $U \in \C$. We note that $K(\eta)(\theta (U)) = \eta(U)$. Thus $K(\eta)$ is an equivalence upto direct summands.
\end{proof}

\section{Proof of Theorem \ref{first}}
In this section we give a proof of Theorem \ref{first}. We first need
the following result:
\begin{lemma}
\label{A-K} Let $(A,\m)$ be a Gorenstein local ring. Then there is an equivalence of $A$-triangulated categories $K(\CMS^0(A)) \rt \CMS^0(\wh{A})$.
\end{lemma}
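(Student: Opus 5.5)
The plan is to apply Theorem \ref{crucial} with $\C = \CMS^0(A)$, $\D = \CMS^0(\wh{A})$, $R = A$, and $\eta \colon \CMS^0(A) \rt \CMS^0(\wh{A})$ the completion functor $M \mapsto \wh{M} = M\otimes_A \wh{A}$. Theorem \ref{crucial} then yields directly that $K(\eta) \colon K(\CMS^0(A)) \rt \CMS^0(\wh{A})$ is an equivalence of triangulated $A$-categories. So there are four hypotheses to verify.

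\textbf{The functor $\eta$.} It is well defined and triangulated. $\wh{A}$ is flat over $A$, so $\wh{M}$ is MCM over the Gorenstein ring $\wh{A}$, and completion preserves free modules, syzygies and cosyzygies. Hence $\eta$ is an $A$-linear triangulated functor $\CMS(A)\rt\CMS(\wh{A})$. If $M$ is free on the punctured spectrum of $A$, then $\wh{M}$ is free on $\Spec^*(\wh{A})$. Indeed, for $Q \neq \wh{\m}$ with contraction $P = Q\cap A$, either $P \neq \m$, so $M_P$ is free and $\wh{M}_Q = M_P\otimes \wh{A}_Q$ is free; or $P = \m$. The latter cannot occur for a nonmaximal $Q$, because $\m\wh{A} = \wh{\m}$ is the maximal ideal.

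\textbf{Hom-finiteness.} For $M,N \in \CMS^0(A)$ the module $\sHom_A(M,N)$ is finitely generated and its localization at every $P\neq \m$ is $\sHom_{A_P}(M_P,N_P)=0$, since $M_P$ is free. So it has finite length. The same argument works over $\wh{A}$, and since $\ell_A = \ell_{\wh{A}}$ for $\wh{A}$-modules of finite length, $\CMS^0(\wh{A})$ is Hom-finite as an $A$-category. Because $\wh{A}$ is complete, hence Henselian, $\CMS^0(\wh{A})$ is a KRS category, exactly as used in the proof of Corollary \ref{elkik}.

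\textbf{$\eta$ is an equivalence up to direct summands.} This is the content of \cite[3.2]{T}, already invoked in the proof of Corollary \ref{elkik}; its statement there does not require $A$ to be Henselian. Should one need to justify it, the argument runs as follows. Full faithfulness holds because $\sHom_{\wh{A}}(\wh{M},\wh{N}) \cong \sHom_A(M,N)\otimes_A \wh{A}$ by flatness, and a finite length module equals its completion. Density up to summands follows from Elkik/Auslander-type approximation: a MCM $\wh{A}$-module $X$ free on the punctured spectrum is determined by a module of finite length data, and it is a direct summand of the completion of some $A$-module (for example, obtained by descending a presentation matrix modulo a high power of $\m$, using that $X$ is isolated-singularity-like).

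The main obstacle is this last density step, and I would rely on the citation \cite[3.2]{T} rather than reprove it. Once it is granted, Theorem \ref{crucial} applies verbatim and gives the equivalence $K(\CMS^0(A)) \rt \CMS^0(\wh{A})$.
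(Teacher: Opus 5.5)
Your proposal is correct and follows the paper's proof: apply Theorem \ref{crucial} to the completion functor $\eta \colon \CMS^0(A) \rt \CMS^0(\wh{A})$, using \cite[3.2]{T} for the equivalence up to direct summands, Hom-finiteness of both categories, and the KRS property of $\CMS^0(\wh{A})$ (as $\wh{A}$ is Henselian). You also verify details the paper leaves implicit: that completion preserves freeness on the punctured spectrum, and the localization argument for Hom-finiteness. These checks are correct.
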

\begin{proof}
  We note that the $A$-functor $\eta \colon \CMS^0(A) \rt \CMS^0(\wh{A})$ is an equivalence upto direct summands, see \cite[3.2]{T}. Furthermore $\CMS^0(A)$ and $\CMS^0(\wh{A})$ are Hom-finite $A$-categories. We also have that $\CMS^0(\wh{A})$ is a KRS category. The result follows from Theorem \ref{crucial}.
\end{proof}
We now give
\begin{proof}[Proof of Theorem \ref{first}]
By H. Matsui, \cite[4.6]{MH} we have an equivalence \\ $\Phi^0 \colon \CMS^0(A) \rt \CMS^0(B)$. It follows that $K(\CMS^0(A)) \cong K(\CMS^0(B))$. By Lemma \ref{A-K} it follows that $\CMS^0(\wh{A}) \cong \CMS^0(\wh{B})$. We note that $\wh{A}, \wh{B}$ are Henselian. The result follows from  \cite[6.12]{P-stable}.
\end{proof}

\section{Proof of Theorem \ref{second}}
We need a few preliminaries before we  prove Theorem \ref{second}.

\s\label{setup-second} Let $(A,\m)$ be a Gorenstein local ring of dimension $d$. Let $M \in \CMS(A)$. Consider $T_M = \sHom_A(M, M)$.  Let $Z_M = $ center of $T$.  We have a ring homomorphism $\rho \colon A \rt Z_M$. We note that via this map $Z$ is a finite $A$-module. Let $N \in \CMS^0(A)$. We have $\Ext^n_A(M, N)$ has finite length as an $A$-module for $n \geq 1$. We note that $\Ext^n_A(M, N) = \sHom_A(M, \Omega^{-n}(N))$ and so is a  (right) $T_M$ and hence a  $Z_M$-module. If $u \in \Ext^n_A(M, N)$ then note that for $a \in A$ we have $a.u = \rho(a).u$
We define
\begin{enumerate}
  \item  $\curv_A(M, N) = \limsup_n \sqrt[n]{\ell_A(\Ext^n_A(M, N)}$
  \item $\curv_{Z_M}(M, N) = \limsup_n \sqrt[n]{\ell_{Z_M}(\Ext^n_A(M, N)}$
  \item $\cx_A(M, N)  = \inf\{ r \in \mathbb{N} \mid  \limsup_n \frac{\ell_A(\Ext^n(M, N))}{n^{r - 1}}  < \infty \}.$
  \item $\cx_{Z_M}(M, N)  = \inf\{ r \in \mathbb{N} \mid  \limsup_n \frac{\ell_{Z_M}(\Ext^n(M, N))}{n^{r - 1}}  < \infty \}.$
\end{enumerate}
Let $k$ be the residue field of $A$. Let $0 \rt Y \rt X \rt k \rt 0$ be an MCM approximation of $k$, i.e., $X$ is MCM and $\projdim_A Y < \infty$. We prove:
\begin{theorem}
\label{growth} (with hypotheses as in \ref{setup-second}). We have
\begin{enumerate}[\rm (1)]
  \item $\curv_A(M, N) \leq \curv_A(M,X) = \curv_A(M)$.
  \item $\cx_A(M, N) \leq \cx_A(M, X) = \cx_A(M)$.
\end{enumerate}
\end{theorem}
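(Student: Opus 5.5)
We prove the equalities first and then the inequalities.

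\emph{The equalities.} The MCM approximation gives an exact sequence $0 \rt Y \rt X \rt k \rt 0$ with $\projdim_A Y < \infty$. Since $M$ is MCM over a Gorenstein ring, $\Ext^n_A(M, F) = 0$ for $n \geq 1$ and $F$ free. Resolving $Y$ by a finite free resolution and using dimension shifting, we get $\Ext^n_A(M, Y) = 0$ for $n \geq 1$. The long exact sequence then gives $\Ext^n_A(M, X) \cong \Ext^n_A(M, k)$ for $n \geq 2$. Now $\ell(\Ext^n_A(M,k)) = \beta_n(M)$, because $\Ext^n_A(M,k) \cong \Hom_k(\Tor^A_n(M,k),k)$ via the minimal resolution. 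Hence
$\curv_A(M,X) = \curv_A(M)$ and $\cx_A(M,X) = \cx_A(M)$, since finitely many terms do not affect $\limsup$ or polynomial growth. The same argument shows $X \in \CMS^0(A)$, which is not needed here.

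\emph{The inequalities.} Let $N \in \CMS^0(A)$. Then $N_P$ is free for $P \neq \m$, so $\Ext^n_A(M,N)$ has finite length for $n \geq 1$. I would bound $\ell(\Ext^n_A(M,N))$ linearly by $\ell(\Ext^{n}_A(M,k))$, or by a few neighbouring Ext modules with values in $k$. The cleanest route is a filtration argument on $N$ itself.

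Consider the largest submodules or subquotients of $N$. The problem is that $N$ has positive dimension, so a direct composition series is unavailable. Instead I would replace $N$ up to stable isomorphism, $N \cong \Omega^{d}\Omega^{-d} N$, and use the standard fact (Buchweitz, or a local cohomology argument) that an MCM module free on the punctured spectrum lies in the thick subcategory of $\CMS(A)$ generated by $X$. Concretely, for an $A$-regular sequence $\mathbf{x} = x_1,\ldots,x_d$ in a sufficiently high power of $\m$, one has $\Ext^n_A(M, N) \cong \Ext^n_A(M, N) \otimes A/(\mathbf x)$. This holds because a power of $\m$ annihilates $\underline{\Hom}(N,N)$, and hence annihilates all $\Ext^{n}_A(-,N)$ for $n \geq 1$.

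The finite-length module $N/\mathbf{x}N$ has a composition series of length $L = \ell(N/\mathbf x N)$ with factors $k$. Using the Koszul complex on $\mathbf x$ (each $x_i$ acts as zero on the Ext groups), the long exact sequences yield
\[
\ell(\Ext^{n}_A(M,N/\mathbf{x}N)) = \sum_{j} \binom{d}{j}\ell(\Ext^{n+j}_A(M,N)) \geq \ell(\Ext^n_A(M,N)).
\]
They also give $\ell(\Ext^n_A(M,N/\mathbf x N)) \leq L\cdot \beta_n(M)$. Therefore $\ell(\Ext^n_A(M,N)) \leq L\,\beta_n(M)$ for all $n \geq 1$.

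Taking $n$-th roots and $\limsup$ gives $\curv_A(M,N) \leq \curv_A(M)$, since $L^{1/n} \rt 1$. Dividing by $n^{r-1}$ gives $\cx_A(M,N) \leq \cx_A(M)$.

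\emph{The main obstacle.} The key step is the claim that each $x_i$ acts as zero on $\Ext^n_A(M,N)$ for $n \geq 1$, so that the Koszul long exact sequences split into short exact pieces, and that $\Ext^n_A(M,N/\mathbf x N)$ injects or surjects appropriately. To justify that some power $\m^s$ annihilates $\Ext^{\geq 1}_A(-,N)$, I would use that $\underline{\Hom}_A(N,N)$ has finite length, since $N \in \CMS^0(A)$. Every $\Ext^n_A(M,N) = \sHom(\Omega^n M, N)$ is a module over it through the action $a \mapsto \eta_N(a)$ of \ref{center-eta}, so it is killed by $\ker\eta_N \supseteq \m^s$. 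Choosing $x_i \in \m^s$ then makes the sequences $0 \rt \Ext^{n}(M,N) \rt \Ext^n(M,N/x_1N) \rt \Ext^{n+1}(M,N) \rt 0$ exact. Iterating yields the displayed inequality, and the composition-series bound is a routine induction on $L$.
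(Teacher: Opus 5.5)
Your route differs from the paper's and can be completed. However, the step you flag as the main obstacle is justified only for the first of the $d$ short exact sequences. The $\eta_N$ argument shows that $x\in\m^s$ kills $\Ext^n_A(M,N)=\sHom_A(\Omega^nM,N)$ for $n\ge1$. That gives $0\rt\Ext^n_A(M,N)\rt\Ext^n_A(M,N/x_1N)\rt\Ext^{n+1}_A(M,N)\rt0$.

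To iterate, you need $x_j$ to annihilate $\Ext^n_A(M,N_{j-1})$ for all $n\ge1$, where $N_{j-1}=N/(x_1,\ldots,x_{j-1})N$. This module is not MCM, so there is no stable endomorphism ring to appeal to. The sequence above only shows that $x_2^2$, not $x_2$, kills $\Ext^n_A(M,N_1)$. Without this, the later long exact sequences do not break into short exact pieces. So neither the binomial formula nor $\ell(\Ext^n_A(M,N))\le\ell(\Ext^n_A(M,N/\mathbf xN))$ is proved as written.

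The repair is short. For $a\in\m^s$, multiplication by $a$ on $N$ factors as $N\rt G\rt N$ with $G$ free. Applying $-\otimes_A A/(x_1,\ldots,x_{j-1})$ factors multiplication by $a$ on $N_{j-1}$ through $G/(x_1,\ldots,x_{j-1})G$, which has finite projective dimension (Koszul resolution). As you already used for $Y$, $\Ext^n_A(M,-)$ vanishes for $n\ge1$ on such modules. Hence $a$ kills $\Ext^n_A(M,N_{j-1})$ for all $n\ge1$, and the iteration goes through. Alternatively, take $\mathbf x\subseteq\m^{2^{d-1}s}$ and track annihilators through the extensions. You then get $\ell(\Ext^n_A(M,N))\le\ell(N/\mathbf xN)\,\beta_n(M)$ for all $n\ge1$. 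The rest of your argument, including the equalities (which match the paper's), is fine.

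The paper does not filter $N$. It invokes Takahashi's theorem $\thick(X)=\CMS^0(A)$ and inducts on the levels $\langle\text{add}(X)\rangle_i$. It uses only that $\curv_A(M,-)$ and $\cx_A(M,-)$ are invariant under $\Omega^{\pm1}$, do not increase on summands, and are bounded by the larger of the outer terms of a triangle. That argument is formal and applies to any growth invariant that is subadditive along triangles, but it relies on an external generation theorem and yields only asymptotic comparisons. Yours is self-contained and gives an explicit uniform bound. In effect it is a quantitative version of the standard proof that $N$ is built from $k$: modulo perfect complexes, $N/\mathbf xN$ is a direct sum of copies of $\Omega^{-j}(N)$, $0\le j\le d$.
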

Before proving Theorem \ref{growth} we need a few preliminaries.  We note that $X \cong \Omega^{-d}(\Omega^d(k))$ in $\CMS(A)$. By a result of Takahashi, see \cite[4.3]{T-p}, we get that \\
$\thick(X) = \CMS^0(A)$. As $Y$ has finite injective dimension it follows that for $n > d$ we have  $\ell(\Ext^n_A(M, X)) = \ell (\Ext^n_A(M, k)) = \beta_n(M)$. So $\curv_A(M, X) = \curv_A(M)$.
\s\label{t-r} Let $\Tc$ be a triangulated category with shift functor $\sum$. Let $\Ic_1, \Ic_2$ be two subcategories of $\Tc$. We denote by $\Ic_1 * \Ic_2$ the full subcategory  of $\Tc$ consisting of objects $M$  such that there is a  triangle  $M_1 \rt M \rt M_2 \rt \sum M_1$ with $M_i \in \Ic_i$.
Let $\Ic$ be a full subcategory of $\Tc$. By $\langle \Ic  \rangle$ we denote the smallest subcategory of $\Tc$ containing $\Ic$  which is closed under finite direct sums, direct summands, shifts and isomorphisms. Set  $\Ic_1 \diamond \Ic_2 = \langle  \Ic_1 * \Ic_2 \rangle$.

Set $\langle \Ic \rangle_0 = 0$. Then inductively define $\langle   \Ic   \rangle_i = \langle \Ic \rangle_{i-1} \diamond  \langle \Ic  \rangle.$
It is clear that $\langle \Ic \rangle_{i-1} $ is a subcategory of  $\langle   \Ic   \rangle_i$ for all $i \geq 1$.
Set
$$ \langle \Ic\rangle_{\infty} = \bigcup_{i \geq 0} \langle   \Ic   \rangle_i.$$

\s \label{thick} Let $\Ic$ be a subcategory of $\Tc$. By $\thick(\Ic)$ we mean the intersection of all triangulated subcategories of $\Tc$ containing $\Ic$. It can be shown that
$ \thick(\Ic) =  \langle \Ic\rangle_{\infty}$.
\begin{proof}[Proof of Theorem \ref{growth}]
Let $\Ic = \text{add}(X)$. We have $$  \langle \Ic\rangle_{\infty}  = \thick(\text{add}(X)) = \CMS^0(A).$$

(1) We make the observation that
\begin{enumerate}[\rm (i)]
\item $\curv_A(M, \Omega^j(N)) = \curv_A(M, N)$ for all $j \in \Z$.
\item If $N_1$ is a direct summand of $N$ then
$\curv_A(M, N_1) \leq \curv_A(M, N)$.
\item
$\curv_A(M, N_1 \oplus N_2) \leq \max\{ \curv_A(M, N_1), \curv_A(M, N_2) \}$.
\end{enumerate}
It follows that if $\curv_A(M, E) \leq \curv_A(M, X)$ for all $E \in \C$ (here $\C$ is a subcategory of $\CMS^0(A)$) then
$\curv_A(M, N) \leq \curv_A(M, X)$ for all $N \in \langle \C  \rangle$.

We prove by induction on $i$ that if $N \in \langle   \Ic   \rangle_i $ then $\curv_A(M,N) \leq \curv(M, X)$.  By the above assertions we have nothing to prove if $i = 1$. Assume the result for $i = r$ and we prove it for $i = r +1$. Let $N \in \langle   \Ic   \rangle_r *  \langle   \Ic   \rangle $. Then by the structure of triangles in $\CMS(A)$ we have an exact sequence $0 \rt E_r \rt N \oplus F \rt E_1 \rt 0$ where $E_i \in \langle   \Ic   \rangle_i$ and $F$ a free $A$-module.
So for $n \geq 1$ we have
\[
\ell(\Ext^n(M, N)) \leq \ell(\Ext^n(M, E_r)) +  \ell(\Ext^n(M, E_1)).
\]
Set $\alpha = \curv(M) = \curv(M, X)$. Let $\epsilon > 0$. By induction hypotheses there exists positive integer $n_0(\epsilon)$  such that for $n \geq n_0(\epsilon)$ we have
\[
\ell(\Ext^n(M, E_r))  \leq (\alpha + \epsilon)^n \quad \text{and} \quad \ell(\Ext^n(M, E_1))  \leq (\alpha + \epsilon)^n.
\]
It follows that $\curv(M, N) \leq \alpha + \epsilon$. As $\epsilon > 0$ was arbitrary it follows that $\curv(M, N) \leq \alpha$. So our assertion holds for
$N \in \langle   \Ic   \rangle_r *  \langle   \Ic   \rangle $. By our earlier argument it follows that $\curv(M, L) \leq \alpha$ for all $L \in \langle   \Ic   \rangle_{r +1}$. Thus the result holds by induction.

(2) This is similar to (1).
\end{proof}
We need the following result.
\begin{proposition}\label{bella}
Let $(A,\m)$ be a local Noetherian ring and let $B$ be a commutative $A$-algebra such that $B$ is a finite $A$-module. Let $k = A/\m$. Let $\n_1, \ldots,\n_r$ be the maximal ideals of $B$ and set $\delta = \max \{ \dim_k B/\n_i \mid 1 \leq i \leq r\}$. If $D$ is a finite length $B$-module then
\[
\ell_A(D) \geq \ell_B(D) \geq \frac{1}{\delta}\ell_A(D).
\]
\end{proposition}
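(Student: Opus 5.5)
The plan is to reduce to composition series and compare lengths factor by factor. Since $D$ is a finite length $B$-module, choose a composition series
\[
0 = D_0 \subsetneq D_1 \subsetneq \cdots \subsetneq D_s = D
\]
of $B$-modules. Then $s = \ell_B(D)$ and each $D_j/D_{j-1} \cong B/\n_{i_j}$ for some maximal ideal $\n_{i_j}$ of $B$. Length over $A$ is additive on short exact sequences, and each $D_j$ is also an $A$-module via $A \rt B$. Hence
\[
\ell_A(D) = \sum_{j=1}^{s} \ell_A(B/\n_{i_j}).
\]
The proof therefore reduces to bounding $\ell_A(B/\n_i)$ for each maximal ideal $\n_i$ of $B$.

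The key step is to show that every $\n_i$ contracts to $\m$, so that $\m B \subseteq \n_i$. This holds because $B$ is integral over $A$, being a finite $A$-module, and a prime of $B$ is maximal if and only if its contraction to $A$ is maximal; the only maximal ideal of $A$ is $\m$. Consequently $B/\n_i$ is an $A$-module killed by $\m$, that is, a $k$-vector space. Its $A$-length equals $\dim_k B/\n_i$, which is finite because $B$ is a finite $A$-module.

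It follows that
\[
1 \leq \ell_A(B/\n_i) = \dim_k B/\n_i \leq \delta ,
\]
where the lower bound holds because $B/\n_i \neq 0$. Summing over the $s$ composition factors gives
\[
s \leq \ell_A(D) \leq \delta s ,
\]
which is exactly $\ell_A(D) \geq \ell_B(D) \geq \frac{1}{\delta}\ell_A(D)$.

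There is no serious obstacle. The one point needing care is the lying-over/maximality argument showing $\n_i \cap A = \m$. If $A \rt B$ is not injective, this can be handled by replacing $A$ with its image $A/\ker$, which is still local with residue field $k$, and applying integrality there.
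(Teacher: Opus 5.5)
Your proof is correct and is essentially the paper's argument. The paper first kills $\operatorname{ann}_B D$ to make $B$ Artinian, then splits $B = B_1 \times \cdots \times B_s$ and uses $\ell_A(D_i) = \ell_{B_i}(D_i)\dim_k B/\n_i$. That identity is exactly your composition-series count with factors $B/\n_i$ of $A$-length $\dim_k B/\n_i$, so your version just skips the unneeded Artinian reduction and spells out the fact $\n_i \cap A = \m$ (via integrality), which the paper uses without comment.
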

\begin{proof}
  After going mod the annihilator of $D$ we may assume that $A$ and $B$ are Artin. So $B = B_1\times \cdots \times B_s$ where $B_i$ is local with maximal ideal $\n_i$. Let $D = D_1\times \cdots \times D_s$ along this decomposition.
  Then we have $\ell_B(D) = \sum_{i =1}^{s}\ell_{B_i}(D_i)$ and
  \[
  \ell_A(D) = \sum_{i = 1}^{s} \ell_A(D_i) = \sum_{i=1}^{s} \ell_{B_i}(D_i)\dim_k B/\n_i.
  \]
  The result follows.
\end{proof}
Next we show
\begin{proposition}\label{compare}
(with hypotheses as in \ref{setup-second}). Let $M \in \CMS(A)$ and $N \in \CMS^0(A)$. We have
\begin{enumerate}[\rm (1)]
  \item $\curv_A(M, N) = \curv_{Z_M}(M, N)$.
  \item $\cx_A(M, N)$ is infinite if and only if $\cx_{Z_M}(M, N)$ is infinite.
  \item If $\cx_A(M, N)$ is finite  then $\cx_{Z_M}(M, N) = \cx_A(M, N)$.
\end{enumerate}
\end{proposition}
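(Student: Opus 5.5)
The plan is to apply Proposition \ref{bella} with $B = Z_M$ and $D = \Ext^n_A(M,N)$ for each $n \geq 1$. First I check the hypotheses. By \ref{center-eta} and \ref{setup-second}, $Z_M$ is a commutative $A$-algebra (via $\rho$) which is finite as an $A$-module: it is an $A$-submodule of the finite $A$-module $T_M = \sHom_A(M,M)$, and $A$ is Noetherian. Next, $\Ext^n_A(M,N) = \sHom_A(M,\Omega^{-n}(N))$ is a module over $T_M$, hence over $Z_M$. Since $N \in \CMS^0(A)$, it has finite length over $A$. Because $a\cdot u = \rho(a)\cdot u$, the $A$-structure is the restriction of the $Z_M$-structure, so $\Ext^n_A(M,N)$ is a finite length $Z_M$-module (any $Z_M$-composition series refines to an $A$-one).

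Let $\n_1,\ldots,\n_r$ be the maximal ideals of $Z_M$. Each $Z_M/\n_i$ is a finite extension of $k$, since $Z_M$ is module-finite over $A$ and $\n_i \cap A = \m$. So $\delta = \max_i \dim_k Z_M/\n_i$ is a finite positive integer that does not depend on $n$. Proposition \ref{bella} then gives, for every $n \geq 1$,
\[
\ell_A(\Ext^n_A(M,N)) \geq \ell_{Z_M}(\Ext^n_A(M,N)) \geq \tfrac{1}{\delta}\,\ell_A(\Ext^n_A(M,N)).
\]

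The three assertions follow from this uniform two-sided comparison.

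For (1), take $n$-th roots. Since $\sqrt[n]{1/\delta} \to 1$, the two $\limsup$'s coincide: the upper inequality gives $\curv_{Z_M} \leq \curv_A$, and the lower one gives $\curv_{Z_M} \geq \lim_n \delta^{-1/n}\cdot\curv_A = \curv_A$. A little care is needed with $\limsup$ of a product in which one factor converges to $1$, but that is routine.

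For (2) and (3), note that for every $r$,
\[
\limsup_n \frac{\ell_A(\Ext^n)}{n^{r-1}} < \infty \iff \limsup_n \frac{\ell_{Z_M}(\Ext^n)}{n^{r-1}} < \infty,
\]
because the two sequences differ by at most the constant factor $\delta$. Hence the sets of $r$ over which the infimum is taken are identical. This gives both that one complexity is infinite exactly when the other is, and that they are equal when finite.

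The only point needing attention is the finiteness of $\delta$, that is, that residue fields of $Z_M$ are finite over $k$, together with the compatibility of the two module structures. Both follow from module-finiteness of $Z_M$ over $A$ and the identity $a\cdot u = \rho(a)\cdot u$. Everything else is formal.
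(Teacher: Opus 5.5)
Your proposal is correct and is the paper's proof: apply Proposition \ref{bella} with $B = Z_M$ and $D = \Ext^n_A(M,N)$ to get the $n$-independent bound $\ell_A(D) \geq \ell_{Z_M}(D) \geq \delta^{-1}\ell_A(D)$, and read off all three assertions. Your additional checks (module-finiteness of $Z_M$ over $A$, finiteness of $\delta$, agreement of the $A$- and $Z_M$-module structures on $\Ext^n_A(M,N)$, and the $\limsup$ manipulation) supply details the paper leaves implicit.
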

\begin{proof}
Let $\n_1, \ldots, \n_r$ be all the maximal ideals in $Z_M$. Set $\delta = \max \{ \dim_k B/\n_i \mid 1 \leq i \leq r\}$. If $D$ is a finite length $B$-module then by \ref{bella} we have
\[
\ell_A(D) \geq \ell_{Z_M}(D) \geq \frac{1}{\delta}\ell_A(D).
\]
Set $D = \Ext^n_A(M, N)$ and conclude the assertions in the proposition.
\end{proof}
We now give
\begin{proof}[Proof of Theorem \ref{second}]
let $d = \dim A$ and $r = \dim B$.
(1) Let $\Phi(M) = D$. Then $T = \sHom_A(M, M) \cong \sHom_B(D, D)$. So they have isomorphic centers. If $N \in \CMS^0(A)$ then $\Phi(N) \in \CMS^0(B)$ (and conversely). It follows that
$\curv_{Z_M}(M, N) = \curv_{Z_D}(D, \Phi(N))$. Taking $N = \Omega^{-d}(\Omega^d(k))$ we obtain \\ $\curv_A(M) =\curv_{Z(D)}(D, \Phi(N))$. It follows from \ref{growth} and \ref{compare}  that
 $\curv_A(M) \leq \curv_B(D)$. By considering $\Phi^{-1}$ we get that $\curv_B(D) \leq \curv_A(M)$. Thus \\ $\curv_A(M) = \curv_B(D)$.

 (2) By (1) the sets $\{ \curv_A M \mid M \in \CMa(A) \}$ and  $\{ \curv_A D \mid D \in \CMa(B) \}$ are equal. The maximum of the first set is $\curv_A(\Omega^{\dim A}(A/\m)) = \curv_A(A/\m)$ and that of second is  $\curv_B(\Omega^{\dim B}(B/\n)) = \curv_B(B/\n)$. The result follows.

 (3) and (4) This follows as in (1). We have to use \ref{growth} and \ref{compare}.
\end{proof}
\section{Proof of Theorem \ref{third}}
In this section we give a proof of Theorem \ref{third}. We first need the following result.
\begin{lemma}
  Let $(R,\m)$ be a complete Gorenstein local ring. Let $M$ be a MCM indecomposable $R$-module and  assume $M \ncong R$. Then
  \begin{enumerate}[\rm (1)]
    \item $D = \sHom_R(M,M)/\rad \sHom_R(M,M)$ is a division ring.
    \item $D$ is a finite extension of $k = R/\m$.
    \item The center of $D$ is a finite field extension of $k$.
  \end{enumerate}
\end{lemma}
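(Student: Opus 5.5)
The plan is to reduce everything to the structure of the ordinary endomorphism ring $E = \Hom_R(M,M)$ of an indecomposable module over a complete local ring, and then pass to the stable quotient. Since $R$ is complete, $\CMa(R)$ is a KRS category. Hence $E$ is a local ring; it is also a module-finite $R$-algebra. Set $T = \sHom_R(M,M) = E/P(M,M)$, where $P(M,M)$ is the two-sided ideal of endomorphisms factoring through a free module. Because $M$ is indecomposable and $M \ncong R$, $M$ is not free, so $1_M \notin P(M,M)$ and $T \neq 0$. A nonzero quotient of a local ring is local, so $T$ is a local ring and $\rad T$ is its unique maximal two-sided ideal, equal to the set of non-units. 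Then $D = T/\rad T$ is a ring in which every nonzero element is a unit, that is, a division ring. This proves (1).

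For (2), I will use that $T$ is a finite $R$-module via $\eta_M$ (see \ref{center-eta}), since it is a quotient of the finite $R$-module $E$. The key point is that $\m T \subseteq \rad T$. For this I would argue that $\m T$ is contained in the Jacobson radical of the module-finite $R$-algebra $T$, and the standard Nakayama argument gives this: for $x \in \m T$, the element $1 - x$ is a unit, because $T(1-x) + \m T = T$ forces $T(1-x) = T$ by Nakayama, and similarly on the right. Consequently $D$ is a quotient of $T/\m T$, which is a finite-dimensional $k$-vector space, and the $k$-structure is central because $\eta_M(A)$ lies in the center. So $D$ is a finite-dimensional $k$-algebra; in the statement's language, a finite extension of $k$.

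For (3), let $F = Z(D)$. Then $F$ is a commutative division ring, hence a field. It contains the image of $k$, because $\eta_M(R)$ maps into the center of $T$ and hence into the center of $D$, and $\m$ maps to $0$ since $\m T \subseteq \rad T$. Finally, $F$ is a $k$-subspace of the finite-dimensional $k$-space $D$, so $[F:k] < \infty$.

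The only step with real content is (1): I must make sure that locality of $E$, which comes from completeness and the KRS property (the Henselian property suffices), passes to $T$, and that $T \neq 0$, which uses $M \ncong R$. Everything after that is Nakayama and linear algebra.
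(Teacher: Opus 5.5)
Your proposal is correct and follows essentially the same route as the paper: locality of $\Hom_R(M,M)$ over the complete (hence Henselian) ring passes to $\sHom_R(M,M)$, and $\m$ maps into the radical, so $D$ is finite-dimensional over the central image of $k$. The only differences are minor. You prove $P(M,M)\subseteq\rad\Hom_R(M,M)$ from the properness of $P(M,M)$, using $M\ncong R$, and you prove $\m T\subseteq\rad T$ by Nakayama; the paper instead quotes an external lemma for both containments.
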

\begin{proof}
(1) As $M$ is indecomposable we have that $\Hom_R(M,M)$  is a local (not necessarily commutative) ring, (this follows from \cite[21.35]{Lam}). The ideal of $\Hom_R(M,M)$ consisting of all maps which factor through a free $R$-module is clearly contained in the radical of $\Hom_R(M,M)$, see \cite[2.2]{P-ar}. The result follows.

(2) We have a ring map $\eta \colon A \rt \Hom_A(M, M)$ with $a \mapsto \mu_a$ where $\mu_a \colon M \rt M$ is multiplication by $a$. We note that if $a \in \m$ then $\mu_a(M) \subseteq \m M$.
 So $\mu_a \in \rad \Hom_A(M, M)$, see \cite[2.2]{P-ar}. Next $\eta^\prime \colon A \rt \sHom_A(M, M)$ obtained by composing $\eta$ with the projection $\Hom_A(M, M) \rt \sHom_A(M, M)$. The image of $\mu_a$ in $\sHom_A(M.M)$ is in its radical. So we have a map $k \rt D$. As $\sHom_A(M,M)$ is a finite  $A$-module, it follows that $D$ is a finite extension of $k$.

(3) This follows from (2).
\end{proof}
We now give
\begin{proof}[Proof of Theorem \ref{third}]
By proof of Theorem \ref{first} the equivalence $\Phi \colon \CMS(A) \cong \CMS(B)$ induces an equivalence $\Psi \colon \CMS^0(\wh{A}) \cong \CMS^0(\wh{B})$. Let $M \in \CMS^0(\wh{A})$ be indecomposable.  Let $N = \Psi(M)$. We note that $\sHom_A(M, M) \cong \sHom_B(N,N)$. This implies
$$D_M = \frac{\sHom_A(M, M)}{\rad \sHom_A(M,M)} \cong \frac{\sHom_B(N, N)}{\rad \sHom_B(M,M)} = D_N.$$
So it follows that $Z_M$, the center of $D_M$, is isomorphic to $Z_N$, the center of $D_N$.
We have $Z_M$ is a finite extension of $k$ and $Z_N$ is a finite extension of $l$.

(1) We have $\charp k = \charp Z_M = \charp Z_N = \charp l$.

(2) $Z_M$ is a finite extension of $k$. So $k$ is finite if and only if $Z_M$ is finite. Similarly $l$ is finite if and only if $Z_N$ is finite. As $Z_M \cong Z_N$ the result follows.

(3) Assume $k$ is infinite. As $Z_M$ is a finite extension of $k$ we get that $|Z_M| = |k|$. As $l$ is also infinite, similarly we get that $|Z_N| = |l|$. Since $Z_M \cong Z_N$ the result follows.

(4) If $k$ is perfect then $Z_M$ being a finite extension of $k$ is also perfect. So $Z_N \cong Z_M$ is also perfect. As $Z_N$ is a finite extension of $l$ it follows that $l$ is also perfect
 (this face is known though a little tricky to prove, see \cite{WS}).

(5) If $k$ is algebraically closed then $Z_M = k$. So $Z_N \cong k$ is algebraically closed. We have that $Z_N$ is a finite extension of $l$. If $Z_N \neq l$ then necessarily $\charp l = 0$ and $l$ is a real closed field with $l(\sqrt{-1}) = Z_N \cong k$; see \cite[VI, 9.3]{L} and \cite[XI, 2.4]{L}. The result follows.
\end{proof}

\section{Proof of Theorem \ref{fourth}}
\s \label{set-4} Let $I \subseteq \m$ be an ideal in $A$ and let  $\eR_A(I) = A[It, t^{-1}]$.
If $\eR_A(I)$ is Gorenstein then let $\CMS_\Z(\eR_A(I))$ denotes the category of $\Z$-graded MCM $\eR_A(I)$-modules.
We first construct a triangulated functor $\Psi \colon \CMS_\Z(\eR_A(I)) \rt \CMS(A)$.

\s Let $S$ be a Noetherian ring. Let  $\apc(S)$ denotes the
homotopy category of (unbounded) acyclic complexes of finitely  generated projective
$S$-modules.
It is a full
subcategory of homotopy category of complexes of $S$-modules ($K(S)$), closed under translation and forming mapping cones, hence
inherits a triangulated structure from $K(S)$.  If $T = \bigoplus_{n \in \Z}T_n$ is a graded Noetherian ring then  $\gapc(T)$ denotes the
homotopy category of (unbounded) acyclic complexes of finitely  generated graded projective
$T$-modules (and degree zero maps).
If $X \in \apc(S)$ then let $\Omega_0(X) = \coker(X^{-1} \rt X^0)$. We note that if $(A,\m)$ is a Gorenstein local ring then Buchweitz, \cite[4.4.1]{Bu} constructs a triangle equivalence
$\Omega_0 \colon \apc(A) \rt \CMS(A)$. Similarly if $T$ is a graded Gorenstein ring  then a similar proof to  \cite[4.4.1]{Bu} yields an equivalence
$\Omega_0 \colon \gapc(T) \rt \CMS_\Z(T)$ where $\CMS_\Z(T)$ is the stable category of all MCM graded $T$-modules.

\begin{construction}
\label{mom} We give a triangulated functor $\Phi \colon \gapc(\eR_A(I)) \rt \apc(A)$ as follows.
We note that $\eR_A(I), A[t,t^{-1}]$ are $^*$-local rings. So graded projective modules over them are free. Furthermore $\eR_A(I)_{t^{-1}} = A[t,t^{-1}]$.
First let $\mathbf{f} \colon \gapc(\eR_A(I)) \rt \gapc(A[t,t^{-1}])$ given by $X \mapsto X_{t^{-1}}$. Clearly $\mathbf{f}$ is a triangulated functor. Let $\mathbf{g} \colon \gapc(A[t,t^{-1}]) \rt \apc(A)$ defined by
$\mathbf{g}(X) = X_0$ (here $X_0$ is the complex of free $A$-modules consisting of degree zero components of $X$). Then it is evident that $\mathbf{g}$ is a triangulated functor. Set $\Phi = \mathbf{g}\circ \mathbf{f}$. So $\Phi \colon \gapc(\eR_A(I)) \rt \apc(A)$ is a triangulated functor.
\end{construction}

\begin{remark}\label{induce}
  The triangulated functor $\Phi \colon \gapc(\eR_A(I)) \rt \apc(A)$ yields a triangulated functor $\Psi \colon \CMS_\Z(\eR_A(I)) \rt \CMS(A)$.  The rest of this section  involves understanding $\Psi$.
\end{remark}

We first prove:
\begin{lemma}
\label{bella-l} Let $(A,\m)$ be a \CM \ local ring and let $I$ be an ideal in $A$ with $\eR_A(I)$ \CM. Let $E$ be a MCM $\eR_A(I)$-module. Then there exists a MCM $A$-module $M$ and an $I$-stable filtration $\Fc$ on $M$ such that $\eR(\Fc, M) \cong  E$.
\end{lemma}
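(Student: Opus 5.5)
The natural candidate is $M = E_{t^{-1}}$ in degree zero, i.e.\ $M = (E_{t^{-1}})_0$, with the filtration induced by the graded pieces of $E$. Write $R = \eR_A(I)$ and $E = \bigoplus_{n \in \Z} E_n$. Multiplication by $t^{-1}$ gives maps $E_{n+1} \rt E_n$.

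\textbf{Step 1: the maps $t^{-1}$ are injective.} Since $E$ is MCM over the \CM\ ring $R$, we have $\operatorname{Ass}_R E \subseteq \Min R$. The element $t^{-1}$ lies in no minimal prime of $R$, because it is $R$-regular and $R$ is \CM. Hence $t^{-1}$ is $E$-regular. So each map $E_{n+1} \rt E_n$ is injective, and we may view $\cdots \subseteq E_{n+1} \subseteq E_n \subseteq \cdots$ as submodules of $M = \varinjlim_{n \rt -\infty} E_n$. We have $E_{t^{-1}} = M[t,t^{-1}]$, with degree zero part $M$.

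\textbf{Step 2: $\Fc = \{E_n\}$ is an $I$-stable filtration on $M$.} Since $E$ is finitely generated over $R$, choose homogeneous generators of degrees $\le c$. For $n \le \min\{\deg\}$ every element of $E_n$ is a combination of the generators with coefficients in $R_{\le 0}$. These coefficients are just powers of $t^{-1}$ times elements of $A$. It follows that $E_n = E_{n_0}$ (identified in $M$) for all $n \le n_0$, and so $M = E_{n_0}$ is a finitely generated $A$-module.

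The containment $I E_n \subseteq E_{n+1}$ holds because $It \subseteq R_1$, so $It \cdot E_n \subseteq E_{n+1}$. Under the identification inside $M$, multiplication by $at$ corresponds to multiplication by $a$.

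For $n \gg 0$, generation in degrees $\le c$ gives $E_{n+1} = (It)E_n$, i.e.\ $E_{n+1} = I E_n$ for $n \ge c$. So $\Fc$ is $I$-stable. By construction $\eR(\Fc, M) = \bigoplus_n E_n \cong E$ as graded $R$-modules.

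\textbf{Step 3: $M$ is MCM over $A$.} This is the main obstacle. We have $M \cong E/(t^{-1}-1)E$, since $R/(t^{-1}-1) \cong A$ and the degree-zero part of the localization matches this quotient.

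First we need $t^{-1}-1$ to be $E$-regular. Any associated prime of $E$ is graded and minimal in $R$, so it does not contain $t^{-1}$ and hence does not contain $t^{-1}-1$ either. Indeed, a graded prime containing $t^{-1}-1$ would contain $1$.

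Next we need a depth count. We have $\dim R = \dim A + 1$. Since $E$ is MCM, the depth of $E$ at any maximal ideal of $R$ containing $t^{-1}-1$ equals the height of that ideal. It follows that $M_\m$ has depth $\ge \dim A$ at the maximal ideal $(\m, t^{-1}-1)$ of $R$, whose height is $\dim A + 1$.

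The subtle point is to check that this maximal ideal is exactly the preimage of $\m$ under $R \rt A = R/(t^{-1}-1)$, and that $E_{(\m,t^{-1}-1)}$ is MCM of the right dimension. This uses that $\mathcal{N} = (\m, It, t^{-1})$ is the unique graded maximal ideal and the standard dimension formula for $\eR_A(I)$. I would simply invoke the fact that localizations of MCM modules over \CM\ rings are MCM, together with $\height(\m, t^{-1}-1) = \dim A + 1$. Alternatively, one can argue that $M[t,t^{-1}] = E_{t^{-1}}$ is MCM over $A[t,t^{-1}]$, which forces $M$ to be MCM over $A$ by faithful flatness of $A \rt A[t,t^{-1}]$ localized at $\m A[t,t^{-1}]$.
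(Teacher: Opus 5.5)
Your proposal is correct. Steps 1--2 match the paper, but your main argument in Step 3 takes a different route.

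Steps 1--2 give a self-contained proof of what the paper simply quotes from \cite[3.1]{P-Itoh-g}: once $t^{-1}$ is $E$-regular, $E\cong\eR(\Fc,M)$ with $M=(E_{t^{-1}})_0$. Your verification there is fine.

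For the MCM property, the paper inverts $t^{-1}$. It shows $U=E_{t^{-1}}=M[t,t^{-1}]$ is Cohen-Macaulay of $^*$-dimension $d$. It then gets $\Ext^i_{A[t,t^{-1}]}(k[t,t^{-1}],U)=0$ for $i<d$ via \cite[1.5.15]{BH}, and descends to $\Ext^i_A(k,M)=0$ by faithful flatness of $A\rt A[t,t^{-1}]$. This is essentially your ``alternative''.

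Your main argument instead specializes at the non-homogeneous element $t^{-1}-1$. It uses $E/(t^{-1}-1)E\cong M$, the fact that $t^{-1}-1$ is $E$-regular (associated primes of $E$ are graded), and a depth drop of one at $P=(\m,t^{-1}-1)R$. This avoids the Ext base change and the chain-of-primes count.

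The step you wave at needs more than ``localizations of MCM modules are MCM''. Since $1=t^{-1}-(t^{-1}-1)$, we have $P\not\subseteq\mathcal N$, so $R_P$ is not a localization of $R_{\mathcal N}$. What you need is \cite[1.5.9]{BH}. Set $P^*=\m A[t,t^{-1}]\cap R\subseteq\mathcal N$. Then $\depth E_P=\depth E_{P^*}+1$ and $\height P=\height P^*+1$, and $E_{P^*}$ is MCM as a localization of $E_{\mathcal N}$. The height statement also follows directly: $t^{-1}-1$ is $R_P$-regular and $R_P/(t^{-1}-1)R_P\cong A$.

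Your alternative is in fact the cleanest version of either argument. $E_{P^*}=M[t,t^{-1}]_{\m A[t,t^{-1}]}$ is MCM over $R_{P^*}=A[t,t^{-1}]_{\m A[t,t^{-1}]}$. The map $A\rt A[t,t^{-1}]_{\m A[t,t^{-1}]}$ is flat local with closed fibre the field $k(t)$. Hence depth and dimension are preserved, and $\depth_A M=\depth E_{P^*}=d$ immediately.
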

\begin{proof}
Set $\eR = \eR_I(A)$.
We note that $t^{-1}$ is $\eR$-regular and as $E$ is a MCM $\eR$-module we get that $t^{-1}$ is $E$-regular.
By \cite[3.1]{P-Itoh-g} there exists a finitely generated $A$-module $M$ and an $I$-stable filtration $\Fc$ on $M$ such that $\eR(\Fc, M) \cong  E$.
Let $P$ be a minimal rime of $E$. Then $P$ is a minimal prime of $\eR$. So $P = \q A[t, t^{-1}] \cap \eR$ for some minimal prime $\q $ of $A$. Let $d = \dim A$. Let $\q = \q_0 \subseteq \q_1 \subseteq \cdots \subseteq \q_d = \m$ be the chain of prime ideals in $A$. Then we have a chain $P= P_0 \subseteq \cdots \subseteq P_d = \m A[t,t^{-1} \cap \eR$  where $P_i = \q_i A[t,t^{-1}] \cap \eR$ in the  support of $E$. It follows that the $^*$-dimension of $E_{t^{-1}} = M[t,t^{-1}]$ is $d$ as a  graded $A[t,t^{-1}]$-module.  We have $U = M[t,t^{-1}]$ is \CM \ as a $A[t,t^{-1}]$-module. Note $\n = \m A[t,t^{-1}]$ is
 the $^*$-maximal ideal of $A[t,t^{-1}]$. Let $\kappa(\n)$ be the residue field of $A[t,t^{-1}]_\n$. As $U_\n$ is a  \CM  \  $A[t,t^{-1}]_\n$-module of dimension $d$ we have
 $$\Ext^i_{A[t,t^{-1}]_\n}(\kappa(\n), U_\m) = 0 \quad \text{for} \ i < d. $$
 Notice that $\Ext^i_{A[t,t^{-1}]}(A/\m[t,t^{-1}], U)$ is a graded $A[t,t^{-1}]$-module whose localization at its $^*$-maximal ideal $\n$ is zero for $i < d$. It follows that \\
  $\Ext^i_{A[t,t^{-1}]}(A/\m[t,t^{-1}], U) = 0$ for $i < d$, see \cite[1.5.15]{BH}. We have
  $$\Ext^i_A(A/\m, M)\otimes_A A[t,t^{-1}]  \cong \Ext^i_{A[t,t^{-1}]}(A/\m[t,t^{-1}], U) = 0 \quad \text{for} \ i < d.$$
  As the map $A \rt A[t,t^{-1}]$ is faithfully flat it follows that $\Ext^i_A(A/\m, M) = 0$ for $i < d$. So $M$ is a MCM $A$-module.
\end{proof}

\s \label{infty} Let $\Fc$ and $\Gc$ be $I$-stable filtration's on $M$ and $N$ respectively. Let \\ $f \colon \eR(\Fc, M) \rt \eR(\Gc, N)$ be $\eR_A(I)$-linear. Then there exists $A$-linear maps
\\ $f_n \colon \Fc_n \rt \Gc_n$ for all $n \in \Z$ with $f(mt^n) = f_n(m)t^n$. Let $s(\Fc, \Gc) = \max \{ i \mid \Fc_i = M \ \text{and} \ \Gc_i = N \}$. We note that $\Fc_i = M$ and $\Gc_i = N$ for all $i \leq s(\Fc, \Gc)$. We have
\begin{lemma}\label{f*}
(with hypotheses as in \ref{infty}). If $n, m \leq s(\Fc, \Gc)$ then $f_n = f_m \colon M \rt N$.
\end{lemma}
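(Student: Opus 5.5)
We must show that $f_n = f_m$ for $n, m \le s = s(\Fc,\Gc)$, using the $\eR_A(I)$-linearity of $f$ with respect to the element $t^{-1}$, which has degree $-1$. It suffices to show $f_{n} = f_{n-1}$ for every $n \le s$; induction (or just chaining the equalities) then gives $f_n = f_m$ for all $n,m \le s$.

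Fix $n \le s$ and $x \in \Fc_n = M$. In $\eR(\Fc,M)$ the element $xt^n$ has degree $n$. Multiplying by $t^{-1}\in \eR_A(I)$ gives
$$t^{-1}\cdot xt^n = xt^{n-1},$$
which is the element $x \in \Fc_{n-1} = M$ sitting in degree $n-1$. Here we use that $\Fc_{n-1}\supseteq \Fc_n$, so the inclusion is the identity on $M$.

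Now apply $f$ and use linearity:
$$f_{n-1}(x)t^{n-1} = f(xt^{n-1}) = f(t^{-1}\cdot xt^n) = t^{-1} f(xt^n) = t^{-1} f_n(x)t^n = f_n(x)t^{n-1}.$$
Here $f_n(x) \in \Gc_n = N$, and multiplying by $t^{-1}$ just regards it as an element of $\Gc_{n-1} = N$. Comparing degree $n-1$ components inside $\eR(\Gc,N) \subseteq N[t,t^{-1}]$ gives $f_{n-1}(x) = f_n(x)$ as elements of $N$.

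This works for every $x\in M$, so $f_{n-1} = f_n$ as maps $M \rt N$ whenever $n \le s$. Since $\Fc_i = M$ and $\Gc_i = N$ for all $i\le s$, all these maps have the same source and target, and chaining yields $f_n = f_m$ for all $n,m\le s$.

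There is no real obstacle. The only point needing care is the identification of $\eR(\Fc,M)$ with $\bigoplus_n \Fc_n t^n \subseteq M[t,t^{-1}]$. Under it, multiplication by $t^{-1}$ is the inclusion $\Fc_n \hookrightarrow \Fc_{n-1}$, which is the identity on $M$ in the range $n\le s$. If $M$ has $t^{-1}$-torsion this identification could fail, but $\eR(\Fc,M)$ is by definition a submodule of $M[t,t^{-1}]$, so it holds.
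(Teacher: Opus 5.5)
Your proof is correct and matches the paper's argument: both reduce to showing $f_n = f_{n-1}$ for $n \le s(\Fc,\Gc)$, and both obtain it from the chain $f_{n-1}(m)t^{n-1} = f(t^{-1}mt^{n}) = t^{-1}f_n(m)t^{n} = f_n(m)t^{n-1}$, which uses the $\eR_A(I)$-linearity of $f$ with respect to $t^{-1}$. Your closing remark about $t^{-1}$-torsion in $M$ can be dropped. It is not needed, since $\eR(\Fc,M)=\bigoplus_n \Fc_n t^n$ sits inside $M[t,t^{-1}]$ by definition.
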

\begin{proof}
  It suffices to prove $f_n = f_{n-1}$ for all $n \leq s(\Fc, \Gc)$.
  We have
  \begin{align*}
    f_{n-1}(m)t^{n-1} &= f(m t^{n-1}) = f(t^{-1}mt^{n})   \\
     & = t^{-1}f(mt^{n}) = t^{-1}f_n(m)t^{n} = f_n(m)t^{n-1}.
  \end{align*}

  The result follows.
\end{proof}
\begin{definition}
(with hypotheses as in \ref{infty}). Define $f_* \colon M \rt N$ to be $f_n$ for (any) $n \leq s(\Fc, \Gc)$.
\end{definition}
The next result describes $\Psi$.
\begin{theorem}
\label{psi}(with hypotheses as in \ref{set-4}). The functor $\Psi \colon \CMS_\Z(\eR_A(I)) \rt \CMS(A)$ induced by $\Phi$ takes $\Psi(\eR(\Fc, M)) = M$ and
$$\Psi(f \colon \eR(\Fc, M) \rt \eR(\Gc, N)) = f_* \colon M \rt N.$$
\end{theorem}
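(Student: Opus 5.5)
We need to trace $E=\eR(\Fc,M)$ through $\Omega_0^{-1}$, then $\mathbf f$, then $\mathbf g$, then $\Omega_0$, and check what happens on morphisms.

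\textbf{Objects.} Let $E = \eR(\Fc, M)$ be a graded MCM $\eR_A(I)$-module; by Lemma \ref{bella-l} every object has this form with $M$ MCM. Take a graded complete resolution $X$ of $E$, so $X \in \gapc(\eR_A(I))$ and $\Omega_0(X) = \coker(X^{-1}\rt X^0) \cong E$.

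Localization at $t^{-1}$ is exact, so $X_{t^{-1}}$ is an acyclic complex of graded free $A[t,t^{-1}]$-modules. Its cokernel at position $0$ is $E_{t^{-1}}$.

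We identify $E_{t^{-1}}$ directly. Since $\Fc_n = M$ for $n \ll 0$ and $t^{-1}$ acts on $\eR(\Fc,M)$ by the inclusions $\Fc_{n}\subseteq \Fc_{n-1}$, we have $E_{t^{-1}} = \varinjlim(E \xrightarrow{t^{-1}} E(-1) \rt \cdots)$. In degree $n$ this is the direct limit of $\Fc_n \subseteq \Fc_{n-1}\subseteq\cdots$, which stabilizes at $M$. Hence $E_{t^{-1}} \cong M[t,t^{-1}] = M\otimes_A A[t,t^{-1}]$.

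Next apply $\mathbf g$, taking degree-zero parts. This functor is exact, and since $A[t,t^{-1}](a)_0 \cong A$ it sends graded free modules to free $A$-modules. Therefore $(X_{t^{-1}})_0$ is an acyclic complex of finitely generated free $A$-modules with $\coker$ at position $0$ equal to $M[t,t^{-1}]_0 = M$. So $\Psi(E) = \Omega_0\Phi(X) \cong M$ in $\CMS(A)$.

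One point needs care: $\Psi$ is defined as $\Omega_0\circ\Phi\circ\Omega_0^{-1}$, so the identification above must be compatible with the choice of the inverse equivalence. This is formal, because $\Omega_0$ on $\apc(A)$ is given by the cokernel at degree $0$, and we have just computed that cokernel.

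\textbf{Morphisms.} Let $f\colon E=\eR(\Fc,M) \rt F=\eR(\Gc,N)$. Lift $f$ to a chain map $\tilde f\colon X\rt Y$ of complete resolutions inducing $f$ on $\coker$ at degree $0$. Then $\Phi(\tilde f) = (\tilde f_{t^{-1}})_0$, and it induces on $\coker$ the map $(f_{t^{-1}})_0\colon E_{t^{-1},0}\rt F_{t^{-1},0}$.

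Under the identification $E_{t^{-1},0} = \varinjlim_n \Fc_{-n} = M$, an element $m\in M$ is represented by $m t^{n}\cdot t^{-n}$ for $n\le s(\Fc,\Gc)$. The map $f_{t^{-1}}$ sends it to $f(mt^n)t^{-n} = f_n(m)t^nt^{-n}$, which corresponds to $f_n(m) = f_*(m)$ by Lemma \ref{f*}. Thus $\Psi(f) = f_*$ as a class in $\sHom_A(M,N)$.

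Well-definedness in the stable category is automatic, since $\Psi$ is already a functor by Remark \ref{induce}. Still, as a consistency check: if $f$ factors through a graded free module $\eR(a)$, then $f_*$ factors through $\eR(a)_{t^{-1},0} = A$, which is free.

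The main obstacle is the naturality bookkeeping: making precise the isomorphism $\coker((X_{t^{-1}})_0)\cong M$ and showing it is natural in $E$, so that the induced map on cokernels really is $f_*$ and not merely some map agreeing with it up to the chosen identifications. I would handle this by writing the identification explicitly, as the composite $\Fc_n t^n \hookrightarrow E_{t^{-1}}$ followed by multiplication by $t^{-n}$, for $n \le s(\Fc,\Gc)$ and $n\ll0$. With that formula in hand, naturality follows from Lemma \ref{f*}.
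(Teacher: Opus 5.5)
Your proposal is correct and follows essentially the same route as the paper. You take a graded complete resolution, localize at $t^{-1}$ to identify the cokernel with $M[t,t^{-1}]$, pass to degree-zero components, and trace a lift of $f$. Your explicit identification $mt^n\cdot t^{-n}\mapsto m$ for $n\le s(\Fc,\Gc)$ is exactly what the paper's Lemmas \ref{bella-3} and \ref{loc-tilde} accomplish, namely that the localized map is $f_*[t,t^{-1}]$.
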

We will need a few preliminary results.
\s \label{inv} Let $f \colon M[t,t^{-1}] \rt N[t,t^{-1}]$ be a graded  $A[t,t^{-1}]$-linear map. Define \\ $f_n \colon M \rt N$ by setting $f(mt^n) = f_n(m)t^n$ for $n \in \Z$.
We show
\begin{lemma}\label{bella-3}(with hypotheses as in \ref{inv}). We have $f_n = f_{n-1}$ for all $n \in \Z$.
\end{lemma}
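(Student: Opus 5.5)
The argument is a direct computation that mirrors the proof of Lemma \ref{f*}. The only new input is that $t$ (and not just $t^{-1}$) is a unit of degree $1$ in $A[t,t^{-1}]$, so the relation between consecutive components holds in every degree rather than only for $n \leq s(\Fc,\Gc)$.

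Here $M[t,t^{-1}] = \bigoplus_{n\in\Z} Mt^n$, and since $f$ is graded of degree zero it maps $Mt^n$ into $Nt^n$. This justifies the definition of $f_n$ in \ref{inv}. Each $f_n$ is $A$-linear because $f$ is $A$-linear and $A \subseteq A[t,t^{-1}]$ sits in degree zero.

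For $m \in M$ and $n \in \Z$, write $mt^{n-1} = t^{-1}\cdot (mt^n)$ and use the $A[t,t^{-1}]$-linearity of $f$:
\begin{align*}
f_{n-1}(m)t^{n-1} &= f(mt^{n-1}) = f(t^{-1}\cdot mt^{n}) \\
&= t^{-1}f(mt^{n}) = t^{-1}f_n(m)t^{n} = f_n(m)t^{n-1}.
\end{align*}
Comparing coefficients in the degree $n-1$ component $Nt^{n-1} \cong N$ gives $f_{n-1}(m) = f_n(m)$. Since $m$ and $n$ were arbitrary, this proves the lemma.

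No step here is a real obstacle. The only point needing care is that the identification $Nt^{n-1} \cong N$, $xt^{n-1} \mapsto x$, is an isomorphism, so equality of the elements $f_{n-1}(m)t^{n-1}$ and $f_n(m)t^{n-1}$ forces $f_{n-1}(m) = f_n(m)$; this holds because $N[t,t^{-1}] = N\otimes_A A[t,t^{-1}]$ is free over $N$ in each degree. As a consequence, $f$ is determined by the single $A$-map $f_0$, namely $f = f_0\otimes A[t,t^{-1}]$. This is the form in which I expect the lemma to be used, to identify the degree-zero part of $\mathbf{f}(X) = X_{t^{-1}}$ and hence $\Psi$ on morphisms in Theorem \ref{psi}.
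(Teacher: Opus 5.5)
Your proof is correct and is the paper's argument: the same one-line computation $f_{n-1}(m)t^{n-1} = f(t^{-1}\cdot mt^{n}) = t^{-1}f_n(m)t^{n} = f_n(m)t^{n-1}$, followed by comparing components in degree $n-1$. One small correction to your framing: the computation only multiplies by $t^{-1}$, and it holds in every degree because every graded piece of $M[t,t^{-1}]$ is all of $M$ (in contrast to $\eR(\Fc,M)$, where $\Fc_n \neq M$ for large $n$), not because $t$ is a unit.
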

\begin{proof}
  We have
  $$f_{n-1}(m)t^{n-1} = f(mt^{n-1}) = f(t^{-1}mt^{n}) = t^{-1}f_n(m)t^{n} = f_n(m)t^{n-1}.$$
  The result follows.
\end{proof}
\begin{definition}
Let $f \colon M[t.t^{-1}] \rt N[t,t^{-1}]$ be $A[t,t^{-1}]$-linear. Define $\wt{f} \colon M \rt N$ to be (any) $f_n$ with $n \in \Z$.
\end{definition}

\begin{lemma}
\label{loc-tilde}
Let $\Fc$ and $\Gc$ be $I$-stable filtration's on $M$ and $N$ respectively. Let \\ $f \colon \eR(\Fc, M) \rt \eR(\Gc, N)$ be $\eR_A(I)$-linear. Consider the map $f_{t^{-1}} \colon \eR(\Fc, M)_{t^{-1}} \rt \eR(\Gc, N)_{t^{-1}}$. We have $\eR(\Fc, M)_{t^{-1}} = M[t,t^{-1}]$ and  $\eR(\Gc, N)_{t^{-1}} = N[t,t^{-1}]$. Then $\wt{f_{t^{-1}}} = f_*$.
\end{lemma}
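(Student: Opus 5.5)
We prove Lemma \ref{loc-tilde} by computing directly inside the graded module $N[t,t^{-1}]$, where every element is determined by its homogeneous components.

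\textbf{Step 1: identify the localizations.} An element of $\eR(\Fc,M)_{t^{-1}}$ has the form $x/(t^{-1})^j$ with $x \in \eR(\Fc,M)$. Because $t^{-1}$ is invertible in $A[t,t^{-1}]$, the element $mt^n/t^{-j}$ is identified with $mt^{n+j}$. The map $\eR(\Fc,M)_{t^{-1}} \rt M[t,t^{-1}]$ given by $x/t^{-j} \mapsto t^j x$ is injective, since $\eR(\Fc,M) \subseteq M[t,t^{-1}]$ is torsion-free with respect to $t^{-1}$. It is surjective because $\Fc_n = M$ for $n \ll 0$: given $mt^n$, choose $j$ with $n-j \leq s(\Fc,\Gc)$, and then $mt^n = t^j (m t^{n-j})$ with $mt^{n-j} \in \eR(\Fc,M)$. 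The same argument identifies $\eR(\Gc,N)_{t^{-1}}$ with $N[t,t^{-1}]$.

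\textbf{Step 2: describe $f_{t^{-1}}$ on these identifications.} By definition $f_{t^{-1}}(x/t^{-j}) = f(x)/t^{-j}$. Transported to the polynomial modules, this says $f_{t^{-1}}(t^j x) = t^j f(x)$ for $x \in \eR(\Fc,M)$.

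\textbf{Step 3: compute $\wt{f_{t^{-1}}}$ at a convenient degree.} By Lemma \ref{bella-3}, $\wt{f_{t^{-1}}}$ may be computed as $(f_{t^{-1}})_n$ for any $n \in \Z$. Take $n \leq s(\Fc,\Gc)$. Then $mt^n \in \eR(\Fc,M)$, so we may use $j=0$ and get
$$f_{t^{-1}}(mt^n) = f(mt^n) = f_n(m)t^n.$$
Hence $(f_{t^{-1}})_n = f_n$. Since $n \leq s(\Fc,\Gc)$, the definition of $f_*$ (justified by Lemma \ref{f*}) gives $f_n = f_*$. Therefore $\wt{f_{t^{-1}}} = f_*$.

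The only point needing care is Step 1, namely checking that the identifications of the localizations with $M[t,t^{-1}]$ and $N[t,t^{-1}]$ are canonical. Once that is settled, Step 2 makes $f_{t^{-1}}$ agree with $f$ on $\eR(\Fc,M)$ with no twist in degree, and Step 3 is immediate.
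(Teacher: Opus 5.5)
Your proof is correct and follows the paper's argument: choose $n \leq s(\Fc,\Gc)$, so that $mt^n \in \eR(\Fc,M)$ and $f_{t^{-1}}(mt^n) = f(mt^n) = f_*(m)t^n$, which gives $\wt{f_{t^{-1}}} = f_*$ by Lemma \ref{bella-3}. Your Step 1, which makes the identification $\eR(\Fc,M)_{t^{-1}} \cong M[t,t^{-1}]$ explicit, just fills in a detail the paper takes for granted.
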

\begin{proof}
Let $n \leq s(\Fc, \Gc)$. We note that $\Fc_n = M$ and $\Gc_n = N$. Let $m \in M$.
Then $$f_*(m)t^n = f(mt^n) = f_{t^{-1}}(mt^{n}) = f_*(m)t^{n}/1 = \wt{f}(m)t^n.$$
The result follows.
\end{proof}
We now give
\begin{proof}[Proof of Theorem \ref{psi}]
Let $\eR(\Fc, M)$ be  a MCM $\eR = \eR_A(I)$-module. Let $X$ be a graded complete resolution of $\eR(\Fc, M)$ with  $\coker(X^{-1} \rt X^0) = \eR(\Fc, M)$. So $\coker(X^{-1}_{t^{-1}} \rt X^0_{t^{-1}}) = M[t, t^{-1}]$. Taking degree zero component we get that $\Phi(X)$ is a complete resolution of $M$. Thus $\Psi(\eR(\Fc, M)) = M$.

Let $\eR(\Fc, M)$ and $\eR(\Gc, N)$  be   MCM $\eR$-modules and let $f \colon \eR(\Fc, M) \rt \eR(\Gc, N)$ be $\eR$-linear. Let $X$ (resp. $Y$) be complete resolutions of $\eR(\Fc, M)$(resp. $\eR(\Gc, N)$). Let $\wh{f} \colon X \rt Y$ be a lift of $f$. So we have a commutative diagram
\[
  \xymatrix
{
 X^{-1}
\ar@{->}[r]
\ar@{->}[d]^{u^{-1}}
 & X^{0}
\ar@{->}[d]^{u^0}
\ar@{->}[r]
& \eR(\Fc, M)
\ar@{->}[r]
\ar@{->}[d]^{f}
& 0
\\
 Y^{-1}
 \ar@{->}[r]
 & Y^0
 \ar@{->}[r]
& \eR(\Gc, N)
\ar@{->}[r]
&0
\
 }
\]
Localizing at $t^{-1}$ and by \ref{bella-3} and  \ref{loc-tilde} we obtain a commutative diagram
\[
  \xymatrix
{
 X^{-1}_{t^{-1}}
\ar@{->}[r]
\ar@{->}[d]^{u^{-1}_*[t,t^{-1}]}
 & X^{0}_{t^{-1}}
\ar@{->}[d]^{u^0_*[t,t^{-1}]}
\ar@{->}[r]
& M[t,t^{-1}]
\ar@{->}[r]
\ar@{->}[d]^{f_*[t,t^{-1}]}
& 0
\\
 Y^{-1}_{t^{-1}}
 \ar@{->}[r]
 & Y^0_{t^{-1}}
 \ar@{->}[r]
& N[t,t^{-1}]
\ar@{->}[r]
&0
\
 }
\]
Taking the degree zero component we obtain that $\Psi(f) = f_*$. The result follows.
\end{proof}
\begin{remark}
On might wonder whether we can directly define  \\ $\Psi \colon \CMS_\Z(\eR_A(I)) \rt \CMS(A)$ by the formula given in Theorem \ref{psi}. To do this  note that we have have to show there exists \textit{natural isomorphism}
$$\Psi(\Omega^{-1}_{\eR}(-)) \cong \Omega^{-1}_A(\Psi(-)).$$  Note that
$\Om^{-1}_{\eR}(\eR(\Fc, M)) = \eR(\Gc, \Omega^{-1}_A(M) \oplus H)$ where $H$ is a free $A$-module and $\Gc$ is an $I$-stable filtration on $\Omega^{-1}_A(M) \oplus H$. The \emph{naturality} of  this isomorphism $\Psi(\Omega^{-1}_{\eR}(-)) \cong \Omega^{-1}_A(\Psi(-))$ is not clear to this author.
\end{remark}
The following Corollary is immediate from Theorem \ref{psi}.
\begin{corollary}\label{cocaine}
(with hypotheses as in Theorem  \ref{psi}). We have
$$ \ker \Psi = \{ \eR(\Fc, M) \in \CMS_\Z(\eR_A(I)) \mid M \ \text{is free}\}. $$
\end{corollary}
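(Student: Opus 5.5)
By Lemma \ref{bella-l}, every object of $\CMS_\Z(\eR_A(I))$ is isomorphic to one of the form $\eR(\Fc, M)$, where $M$ is an MCM $A$-module and $\Fc$ is an $I$-stable filtration on $M$. So it suffices to determine when $\Psi(\eR(\Fc,M))$ is zero in $\CMS(A)$. Theorem \ref{psi} computes this: $\Psi(\eR(\Fc, M)) = M$ as an object of $\CMS(A)$. An object of $\CMS(A)$ is zero if and only if it is a free $A$-module. Hence $\eR(\Fc,M) \in \ker\Psi$ if and only if $M$ is free, which gives both inclusions.

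The one point needing care is that the module $M$ attached to $E$ must be well defined up to free summands, so that the description does not depend on which presentation $E \cong \eR(\Fc,M)$ we choose. This is where I expect the only subtlety to lie. I would handle it by observing that $E_{t^{-1}} \cong M[t,t^{-1}]$, so $M \cong (E_{t^{-1}})_0$ is determined by $E$ up to isomorphism. More generally, if $E \cong E'$ in the stable category, then applying the functor $\Psi$ (equivalently, localizing at $t^{-1}$ and taking degree-zero components) gives $M \cong M'$ in $\CMS(A)$.

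Freeness is therefore an invariant of the object $E$, and the equality of sets in the corollary follows.
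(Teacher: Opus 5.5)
Your proof is correct and is essentially the paper's argument. The paper simply declares the corollary immediate from Theorem \ref{psi}. That theorem, together with Lemma \ref{bella-l} (every graded MCM $\eR_A(I)$-module is some $\eR(\Fc,M)$ with $M$ MCM) and the fact that $M \cong 0$ in $\CMS(A)$ exactly when $M$ is free, is precisely what you use. Your remark that $M \cong (E_{t^{-1}})_0$, or more simply that $\Psi$ is a functor, correctly settles the well-definedness point the paper leaves implicit.
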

We note the following
\begin{proposition}\label{cocoa}
(with hypotheses as in Theorem  \ref{psi}). The functor $\Psi$ is dense and full.
\end{proposition}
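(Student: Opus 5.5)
Density: given an MCM $A$-module $M$, we take the $I$-adic filtration, so $\Fc_n = I^nM$ for $n\geq 0$ and $\Fc_n = M$ for $n<0$, and consider $E=\eR(I,M)$. This $E$ need not be MCM over $\eR = \eR_A(I)$. We therefore replace it by an MCM approximation over the graded Gorenstein ring $\eR$: there is an exact sequence $0 \rt Y \rt X \rt E \rt 0$ with $X$ graded MCM and $\projdim_{\eR} Y < \infty$. Localizing at $t^{-1}$ gives
\[
0 \rt Y_{t^{-1}} \rt X_{t^{-1}} \rt M[t,t^{-1}] \rt 0 .
\]
Here $Y_{t^{-1}}$ has finite projective dimension over $A[t,t^{-1}]$, and $X_{t^{-1}}$ and $M[t,t^{-1}]$ are MCM. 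Since $A[t,t^{-1}]$ is Gorenstein, the Auslander--Buchsbaum formula forces $Y_{t^{-1}}$ to be free. By Lemma \ref{bella-l} we may write $X = \eR(\Gc, N)$. Taking degree zero components gives $0 \rt F \rt N \rt M \rt 0$ with $F$ a free $A$-module. This sequence splits because $\Ext^1_A(M,A)=0$, so $N \cong M \oplus F$. Then Theorem \ref{psi} gives $\Psi(X) = N \cong M$ in $\CMS(A)$. So $\Psi$ is dense.

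Fullness: let $\eR(\Fc,M)$ and $\eR(\Gc,N)$ be MCM, and let $g\colon M \rt N$ be $A$-linear. By Theorem \ref{psi} it suffices to find a graded map $f$ with $f_*=g$, possibly after shifting the source. The natural attempt is to replace $\Fc$ by the shifted filtration $\Fc(-c)$, that is $\Fc(-c)_n = \Fc_{n-c}$, for $c \gg 0$. Since $\Gc$ is $I$-stable, there is $n_0$ with $\Gc_n \supseteq I^{n-n_0}N$ for all $n$; this uses Artin--Rees-type comparison of a stable filtration with the $I$-adic one. Also $\Fc_n \subseteq I^{n-n_1}M$ for suitable $n_1$, since $\Fc_n = I^{n-n_1}\Fc_{n_1}$ for $n\ge n_1$. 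Hence $g(\Fc_{n-c}) \subseteq g(I^{n-c-n_1}M) \subseteq I^{n-c-n_1}N \subseteq \Gc_n$ once $c \geq n_0 - n_1$ (with the trivial check for small $n$). So $g$ defines a graded map $f\colon \eR(\Fc,M)(-c) \rt \eR(\Gc,N)$ with $f_* = g$.

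The obstacle is that $\eR(\Fc,M)(-c)$ is a different object from $\eR(\Fc,M)$, although $\Psi$ sends both to $M$. To get fullness in the strict sense, I would exploit the fact that multiplication by $t^{-c}$ gives a morphism $\tau\colon\eR(\Fc,M) \rt \eR(\Fc,M)(-c)$ wait, its direction must be checked: $t^{-1}$ has degree $-1$, so $t^{-c}$ gives a degree-zero map $\eR(\Fc,M)(c) \rt \eR(\Fc,M)$ or its dual version. The map $\tau$ satisfies $\Psi(\tau) = \mathrm{id}_M$ because $(t^{-c})_* = 1$ after localization. So $\tau$ is invertible after applying $\Psi$. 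The precise statement I expect to prove is therefore: every morphism $\Psi(E)\rt\Psi(E')$ has the form $\Psi(f)\Psi(\tau)^{-1}$. Equivalently, $\Psi$ is full as a functor on the localization at maps of the type $t^{-c}$.

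I will try to arrange the shift on the target rather than the source, so that the composite $f\circ\tau$ is already a genuine map $\eR(\Fc,M) \rt \eR(\Gc,N)$. This works if the inclusion $I^{n-n_0}N\subseteq \Gc_n$ can be used in the right direction. If the direction of the shift cannot be fixed, I would fall back to proving fullness in this "up to $\tau$" sense. That suffices for the equivalence $\CMS_\Z(\eR)/\ker\Psi \cong \CMS(A)$, since $\mathrm{cone}(\tau) \in \ker\Psi$ and $\tau$ therefore becomes invertible in the Verdier quotient.
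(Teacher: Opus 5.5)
Your density argument is the paper's; the only difference is that you localize at $t^{-1}$ and use Auslander--Buchsbaum over $A[t,t^{-1}]$, where the paper reads off very negative degrees of $Y$ and of a finite free resolution of $Y$.

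\textbf{The fullness computation has its inequality reversed.} From $\Gc_n\supseteq I^{n-n_0}N$, the inclusion $I^{n-c-n_1}N\subseteq\Gc_n$ needs $n-c-n_1\geq n-n_0$, i.e.\ $c\leq n_0-n_1$. Passing to $\Fc_{n-c}$ with $c\gg0$ enlarges the source filtration, so $g$ does \emph{not} induce a map $\eR(\Fc,M)(-c)\rt\eR(\Gc,N)$. No Artin--Rees is needed: $\Gc_n\supseteq I^{n-n_0}N$ follows directly from $I\Gc_n\subseteq\Gc_{n+1}$ and $\Gc_{n_0}=N$. What is true is this: for $c\geq\max\{0,n_1-n_0\}$, $g$ induces $f\colon\eR(\Fc,M)(c)\rt\eR(\Gc,N)$ with $f_*=g$, where $\eR(\Fc,M)(c)_n=\Fc_{n+c}$.

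\textbf{Your plan to obtain a genuine map $\eR(\Fc,M)\rt\eR(\Gc,N)$ cannot work.} Your erroneous version, composed with the degree-zero map $t^{-c}\colon\eR(\Fc,M)\rt\eR(\Fc,M)(-c)$, would give literal fullness. But literal fullness fails whenever $A$ is not regular. Take an MCM $E=\eR(\Hc,N)$ with $N\cong M\oplus A^r$ and $M\neq0$ without free summands; this exists by density. Choose $s$ with $\Hc_s=N$ and $c$ with $\Hc_{s+c}\subseteq\m N$. Since $E(c)$ is $t^{-1}$-torsion free, every degree-zero $f\colon E\rt E(c)$ has $f_s=f_*$, so $f_*(N)\subseteq\m N$. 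Suppose $1_N=f_*+h$ with $h$ factoring through a free module. Compressing to the summand $M$ gives $M=\m M$, contradicting Nakayama. Yet $\Psi(E)=\Psi(E(c))=N$ by Theorem \ref{psi}. So the proposition, read literally, is false, and no argument can prove it.

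\textbf{With the sign fixed, your fallback is correct and takes a genuinely different route from the paper.} The map $\tau=t^{-c}\colon\eR(\Fc,M)(c)\rt\eR(\Fc,M)$ satisfies $\tau_*=1_M$. Hence $\cone(\tau)\in\ker\Psi$ and $g=\Psi(f)\Psi(\tau)^{-1}$. The roof $(\tau,f)$ therefore shows directly that $\ov\Psi$ is full for every pair of objects.

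The paper instead lifts $f^\sharp\colon\eR(I,M)\rt\eR(I,N)$ to the MCM approximations $X_M\rt X_N$. This proves surjectivity on morphisms only between those particular preimages, so it too establishes only a weak form of fullness. To conclude in Theorem \ref{fourth} that $\ov\Psi$ is full, one still needs each $E$ to be isomorphic in the quotient to $X_{\eR(I,\Psi(E))}$. That requires an argument of the Claim-1/Lemma \ref{clinton} type, which the paper leaves implicit.

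Your roof avoids MCM approximations entirely and gives exactly what Theorem \ref{fourth} uses. Neither your argument nor the paper's proves literal fullness of $\Psi$, which fails by the example above.
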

\begin{proof}
We first prove that $\Psi$ is dense. Let $M$ be any MCM $A$-module. Consider $\eR(I, M) = \bigoplus_{n \in \Z} I^nM$. Let $0 \rt Y \rt X \rt \eR(I, M) \rt 0$ be a MCM approximation of $\eR(I, M)$. As $X$ is a MCM $\eR = \eR_A(I)$-module we get that $X = \eR(\Fc, N)$ where
$N$ is a MCM $A$-module and $\Fc$ is an $I$-stable filtration on $N$, see \ref{bella-l}. We note that $t^{-1}$ is $Y$-regular. So $Y =  \eR(\Gc, U)$ where
$U$ is an $A$-module and $\Gc$ is an $I$-stable filtration on $U$, see \cite[3.1]{P-Itoh-g}. Looking at sufficiently negative degrees we obtain an exact sequence of $A$-modules $0 \rt U \rt N \rt M \rt 0$.
It follows that $U$ is an MCM  $A$-module.
We have $\projdim_{\eR} Y$ is finite. So we have an exact sequence
$0 \rt H_r \rt H_{r-1} \rt \cdots \rt H_0 \rt \eR(\Gc, U) \rt 0$ where $H_i$ are free $\eR$-modules. By looking at sufficiently negative components we get that $\projdim_A U < \infty$. As $U$ is also MCM $A$-module it follows that $U$ is free.
Thus $N = M \oplus U$. We have $\Psi(X) = N \cong M$ in $\CMS(A)$.

Next we show that $\Psi$ is full. Let $M, N$ be MCM $A$-modules and let $f \colon M \rt N$ be $A$-linear. Note $f$ induces an $\eR$-linear map $f^\sharp \colon \eR(I, M) \rt \eR(I, N)$. We take MCM approximations $0 \rt Y_M \rt X_M \rt \eR(I, M) \rt 0$ and $0 \rt Y_N \rt X_N \rt \eR(I, N) \rt 0$  of $\eR(I, M)$ and $\eR(I, N)$ respectively. We note that $f^\sharp$ induces a $\eR$-linear map $g \colon X_M \rt X_N$ such that we have a commutative diagram

\[
  \xymatrix
{
0
\ar@{->}[r]
 & Y_M
\ar@{->}[r]
\ar@{->}[d]^{u}
 & X_M
\ar@{->}[d]^{g}
\ar@{->}[r]
& \eR(I, M)
\ar@{->}[r]
\ar@{->}[d]^{f^\sharp}
& 0
\\
0
\ar@{->}[r]
 & Y_N
 \ar@{->}[r]
 & X_N
 \ar@{->}[r]
& \eR(I, N)
\ar@{->}[r]
&0
\
 }
\]
By our earlier argument there exists MCM $A$-modules $U, V$ and $I$-stable filtrations $\Fc, \Gc$ on $U$, $V$ respectively such that $X_M  = \eR(\Fc, U)$ and  $X_N = \eR(\Gc, V)$ respectively. Similarly there exists free $A$-modules $K, L$ and $I$-stable filtrations $\Fc^\prime, \Gc^\prime$ on $K$, $L$ respectively such that $Y_M  = \eR(\Fc, K)$ and  $Y_N = \eR(\Gc, L)$ respectively.
Taking components for sufficiently negative degree we obtain a commutative diagram
\[
  \xymatrix
{
0
\ar@{->}[r]
 & K
\ar@{->}[r]
\ar@{->}[d]^{u_*}
 & U
\ar@{->}[d]^{g_*}
\ar@{->}[r]
& M
\ar@{->}[r]
\ar@{->}[d]^{f}
& 0
\\
0
\ar@{->}[r]
 & L
 \ar@{->}[r]
 & V
 \ar@{->}[r]
& N
\ar@{->}[r]
&0
\
 }
\]
The two rows above split. We note that $U \cong M$ and $V \cong N$ in $\CMS(A)$ and under this correspondence $g_*$ is mapped to $f$.
 The result follows.
\end{proof}
We will need the following result.
\begin{lemma}\label{clinton}
Let $V = \bigoplus_{n \in \Z}V_n$ be a finitely generated graded $\eR = \eR_A(I)$-module with $V_n = 0$ for all $n \ll 0$.
Let $0 \rt Y \rt X \rt V \rt 0$ be a MCM-approximation of $V$. Then $X = \eR(\Fc, H)$ where $H$ is a free $A$-module and $\Fc$ is an $I$-stable filtration on $H$.
\end{lemma}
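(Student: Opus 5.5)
The plan mirrors the density argument in Proposition \ref{cocoa}, using the hypothesis $V_n = 0$ for $n \ll 0$ to identify the negative-degree part of the approximation. Set $\eR = \eR_A(I)$.

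First, by Lemma \ref{bella-l}, $X$ is MCM over $\eR$, so $X = \eR(\Fc, H)$ for some MCM $A$-module $H$ and some $I$-stable filtration $\Fc$ on $H$. It remains to show that $H$ is free.

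Next I look at $Y$. Since $X$ is MCM, $t^{-1}$ is $X$-regular, and hence $t^{-1}$ is $Y$-regular, because $Y \subseteq X$. By \cite[3.1]{P-Itoh-g} we get $Y = \eR(\Gc, U)$ for some finitely generated $A$-module $U$ and some $I$-stable filtration $\Gc$ on $U$. Now take the degree $n$ components of $0 \rt Y \rt X \rt V \rt 0$ for $n \ll 0$. For such $n$ we have $V_n = 0$, $\Gc_n = U$ and $\Fc_n = H$, so the exact sequence gives an isomorphism $U \cong H$ of $A$-modules.

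Finally I show $\projdim_A U < \infty$. Because $Y$ has finite projective dimension over $\eR$, there is a finite graded free resolution
\[
0 \rt H_r \rt \cdots \rt H_0 \rt Y \rt 0 .
\]
Each $H_i$ is a finite direct sum of shifts $\eR(-a)$, and $\eR(-a)_n = I^{n-a} = A$ once $n \le a$. So taking degree $n$ components for $n \ll 0$ gives a finite free resolution of $Y_n = U$ over $A$. Alternatively, one can localize at $t^{-1}$ and take degree zero, which turns the resolution into a finite free $A$-resolution of $U$, as in Construction \ref{mom}. Since $H \cong U$ is MCM with finite projective dimension over the local ring $A$, the Auslander--Buchsbaum formula shows that $H$ is free.

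The step needing care is the choice of the degree $n$: it must be so negative that $V_n = 0$, that $\Fc_n = H$ and $\Gc_n = U$, and that every free module $H_i$ is already "saturated" in degree $n$. Each condition holds for all sufficiently small $n$ and there are only finitely many of them, so a single $n$ works for all.
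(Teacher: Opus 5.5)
Your proposal is correct and follows essentially the same route as the paper. You use Lemma \ref{bella-l} to write $X = \eR(\Fc, H)$ with $H$ MCM, and \cite[3.1]{P-Itoh-g} to write $Y = \eR(\Gc, U)$. You then pass to sufficiently negative degrees to get $U \cong H$ and $\projdim_A U < \infty$, and conclude with Auslander--Buchsbaum; your explicit check that $\eR(-a)_n = A$ for $n \le a$ just spells out what the paper summarizes as ``looking at sufficiently negative components.''
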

\begin{proof}
As $X$ is a MCM $\eR = \eR_A(I)$-module we get that $X = \eR(\Fc, N)$ where
$N$ is a MCM $A$-module and $\Fc$ is an $I$-stable filtration on $N$, see \ref{bella-l}. We note that $t^{-1}$ is $Y$-regular. So $Y =  \eR(\Gc, U)$ where
$U$ is an $A$-module and $\Gc$ is an $I$-stable filtration on $U$, see \cite[3.1]{P-Itoh-g}. As $V_n = 0$ for $n \ll 0$ we obtain that $U \cong N$. We have $\projdim_{\eR} Y$ is finite. So we have an exact sequence
$0 \rt H_r \rt H_{r-1} \rt \cdots \rt H_0 \rt \eR(\Gc, U) \rt 0$, where $H_i$ are free $\eR$-modules. By looking at sufficiently negative components we get that $\projdim_A U < \infty$. As $U$ is also MCM $A$-module it follows that $U$ is free. So $N$ is free. The result follows.
\end{proof}
We finally prove
\begin{proof}[Proof of Theorem \ref{fourth}]
As $\Psi$ is dense and full it follows that $\ov{\Psi}$ is dense and full. Thus it suffices to show $\ov{\Psi}$ is faithful. Set $\Dc =  \CMS_\Z(\eR_A(I))/\ker \Psi$. Suppose
$\beta \in \Hom_{\Dc}(\eR(\Fc, M), \eR(\Gc, N)$ be such that $\ov{\Psi}(\beta) = 0$.
Note  $\beta $ can be written as a left fraction
\[
\xymatrix{
\
&\eR(\Hc, E)
\ar@{->}[dl]_{u}
\ar@{->}[dr]^{f}
 \\
\eR(\Fc, M)
\ar@{->}[rr]_{\beta = fu^{-1}}
&\
&\eR(\Gc, N)
}
\]
with $\cone(u) \in \ker \Psi$. It follows that $\Psi(f) = 0$. Thus it suffices to show that
$f = 0$ in $\D$. We note that as $\Psi(f) = 0$ we get that $f_* \colon E \rt N$ factors through a free $A$-module.
After shifting we may assume that $\Hc_n = E$ for $n \leq 0$ and $\Hc_1 \neq E$.

Claim-1: We may assume that $\Gc_n = N$ for $n \leq 0$.

(Note we are not asserting $\Gc_1 \neq N$.)

Proof of Claim-1. Suppose $\Gc_n = N$ for $n \leq a$ with $a < 0$.
We note that $\Gc_n \subseteq \Gc_{n+a}$ for all $n \in \Z$ as $a < 0$. So we have an inclusion of MCM $\eR$-modules
$ \eR(\Gc, N) \xrightarrow{i} \eR(\Gc(a), N)$, say  with co-kernel $V$. We note that $V_n =0$ for $n \ll 0$. Taking  MCM approximations we have a triangle in  $\CMS_\Z(\eR_A(I))$
$$ \eR(\Gc, N) \xrightarrow{i}  \eR(\Gc(a), N) \rt X \rt \Omega^{-1}_{\eR}(\eR(\Gc, N)).$$
We note that by \ref{clinton} it follows that $X \in \ker \Psi$. So $i$ is an isomorphism in $\D$.
Thus it suffices to show that $g = i\circ f = 0$ in $\D$. Notice $g_* = f_* =0$ in $\CMS(A)$.

We have an exact sequence $0  \rt \eR(I, E) \xrightarrow{i_E} \eR(\Hc, E) \rt U \rt 0$. We note that $U_n = 0$ for $n \ll 0$.  Let $Y_E$ be MCM approximation of $\eR(I, E)$. Taking MCM approximations we get by \ref{clinton} that $Y_E \cong \eR(\Hc, E)$ in $\D$. Similarly if $Y_N$ is MCM approximation of $\eR(I, N)$ then $Y_N \cong \eR(\Gc, N)$.

We note that $f_* \colon E \rt N$ induces $\wh{f_*} \colon \eR(I, E) \rt \eR(I, N)$. We have an induced map $X(\wh{f}) \colon Y_E \rt Y_N$. As $f_*$ factors through a free $A$-module say $W$ we get that $\wh{f_*}$ factors through $\eR(I, W)$. So $X(\wh{f_*}) $ factors through $\eR(I, W)$. Thus $X(\wh{f_*}) = 0$ in $\CMS_\Z(\eR_A(I))$.

Note we have a commutative diagram
\[
  \xymatrix
{
 \eR(I,E)
\ar@{->}[r]^{i_E}
\ar@{->}[d]^{\wh{f_*}}
 & \eR(\Hc, E)
\ar@{->}[d]^{f}
\\
 \eR(I, N)
 \ar@{->}[r]^{i_N}
 & \eR(\Gc, N)
\
 }
\]
Taking MCM approximations we get a commutative diagram in $\CMS_\Z(\eR_A(I))$:
\[
  \xymatrix
{
 Y_E
\ar@{->}[r]^{X(i_E)}
\ar@{->}[d]^{X(\wh{f_*})}
 & \eR(\Hc, E)
\ar@{->}[d]^{f}
\\
 Y_N
 \ar@{->}[r]^{X(i_N)}
 & \eR(\Gc, N)
\
 }
\]
As discussed earlier $X(i_E)$ and $X(i_N)$ are isomorphisms in $\D$. Also $X(\wh{f_*}) = 0$ in $\CMS_\Z(\eR_A(I))$ and hence in $\D$. Thus $f = 0$ in $\D$. The result follows.
\end{proof}
\section{Proof of Theorem \ref{fifth}}
In this section we prove Theorem \ref{fifth}. We need a few preliminaries.
Let $(A,\m)$ be a Gorenstein local ring and let $I$ be an ideal in $A$.
Throughout  set  $\eR = \eR_I(A)$ and $G = G_I(A)$. We assume $\eR$(and so $G$) is Gorenstein. Set $\smod(\eR)$ (respectively) $\smod(G)$) to be the abelian category of consisting of graded $\eR$ (respectively $G$)-modules and degree zero graded homomorphisms. If $\mathcal{A}$ is a skeletally small exact (or triangulated) category  then let $G_0(\mathcal{A})$ denote its zeroth $K$-group.

\s\label{dev} Consider
$$\smod(t^{-\infty}, \eR) = \{ M \in \smod(\eR) \mid t^{-s}M = 0 \ \text{for some $s \geq 1$}\}. $$
We note that $\smod(t^{-\infty}, \eR)$ is an abelian category. It contains $\smod(G)$ as a full abelian subcategory. We also have that if $M \in \smod(t^{-\infty}, \eR)$ then $t^{-s}M = 0$ for some $s \geq 1$. It follows that $M$ has a finite filtration $M \supseteq t^{-1}M \supseteq t^{-2} M \supseteq \cdots \supseteq t^{-s + 1}M \supseteq t^{-s}M = 0$ with $t^{-i}M/t^{-i-1}M$ a $G$-module. So by devissage,  \cite[Chapter II, 6.3]{W}, we get that the natural map $G_0(\smod(G)) \rt G_0(\smod(t^{-\infty}, \eR))$ is an isomorphism.
Set $\CMa(G)$ to be the category of all graded  MCM $G$-modules. By \cite[13.2]{Y} (also see \cite[Chapter II, 7.6]{W}) the natural map $\CMa(G) \rt \smod(G)$ induces an isomorphism $G_0(\CMa(G)) \rt G_0(\smod(G))$.
\s \label{map} Consider $X \colon \smod(t^{-\infty},\eR) \rt \CMS_\Z(\eR) $ defined by the MCM approximation functor. Recall we have constructed a triangulated functor $\Psi \colon \CMS_\Z(\eR) \rt \CMS(A)$
with kernel consisting of those MCM $\eR$-modules $E$ such that $E = \eR(F,\Fc)$ for some free $A$-module $F$ and an $I$-stable filtration $\Fc$ on $F$. We show
\begin{proposition}\label{utah}
(with hypotheses as in \ref{dev}). If $M \in \smod(t^{-\infty}, \eR)$ then $X_M \in \ker \Psi$. Conversely if $E \in \ker \Psi$ then there exists $M \in \smod(t^{-\infty}, \eR)$ with $X_M = E$.
\end{proposition}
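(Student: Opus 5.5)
The first assertion is essentially Lemma \ref{clinton}. Let $M \in \smod(t^{-\infty}, \eR)$, so $t^{-s}M = 0$ for some $s \geq 1$. We first check that $M_n = 0$ for $n \ll 0$. Since $M$ is finitely generated graded, $M_n$ is a finitely generated $A$-module for each $n$. Multiplication by $t^{-1}$ lowers degree by one, so $M_{n-s} = t^{-s}M_{n}$ for $n$ below the degrees of the generators of $M$. Indeed, for $n$ sufficiently negative every element of $M_{n-s}$ is an $\eR$-combination of generators with coefficients in $\eR_{<0} = At^{-1}\cdot A[t^{-1}]$, and such combinations lie in $t^{-s}M$. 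Hence $M_n = 0$ for $n \ll 0$. Lemma \ref{clinton} then gives $X_M = \eR(\Fc, H)$ with $H$ free, and Corollary \ref{cocaine} gives $X_M \in \ker\Psi$.

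For the converse, let $E \in \ker \Psi$. By Corollary \ref{cocaine} we may write $E = \eR(\Fc, F)$ with $F$ a free $A$-module, say $F = A^r$. After a shift, $\Fc_n = F$ for $n \leq 0$. Since $\Fc$ is $I$-stable, $I^nF \subseteq \Fc_n$ for all $n$. Hence there is an inclusion of graded $\eR$-modules
\[
0 \rt \eR(I, F) \rt \eR(\Fc, F) \rt V \rt 0,
\]
where $V = \bigoplus_n \Fc_n/I^nF$. Here $V_n = 0$ for $n \leq 0$, and $V$ is finitely generated. We claim that $t^{-s}V = 0$ for some $s$. Stability gives $\Fc_{n+1} = I\Fc_n$ for $n \geq n_0$, so $\Fc_n \subseteq I^{n-n_0}F$ for $n \geq n_0$. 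Then $t^{-n_0}$ maps $\Fc_n$ into $I^{n-n_0}F$, which is the zero class in $V_{n-n_0}$. For small $n$, the finitely many nonzero graded pieces are also killed by a large power of $t^{-1}$, because $V_m = 0$ for $m \leq 0$. So $V \in \smod(t^{-\infty}, \eR)$.

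Next, $\eR(I,F) = \eR^r$ is free, hence zero in $\CMS_\Z(\eR)$. Taking MCM approximations of the short exact sequence gives a triangle
\[
\eR(I,F) \rt \eR(\Fc,F) \rt X_V \rt \Omega^{-1}\eR(I,F),
\]
which yields $X_V \cong \eR(\Fc, F) = E$ in the stable category. To get an equality we use the concrete form of the approximation. The pushout of a free cover argument shows directly that $0 \rt \eR^r \rt E \rt V \rt 0$, with $E$ MCM and $\eR^r$ of finite projective dimension, is already an MCM approximation of $V$. Hence $X_V = E$ up to free summands, which is what "$X_M = E$" means in $\CMS_\Z(\eR)$. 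Take $M = V$.

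The main point needing care is the verification that $V$ is killed by a power of $t^{-1}$, uniformly across all degrees. This uses $I$-stability in high degrees together with the vanishing of $V$ in nonpositive degrees. The rest is a formal application of Lemma \ref{clinton} and Corollary \ref{cocaine}.
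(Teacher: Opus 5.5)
Your proposal is correct and follows the paper's proof. The first half is the argument of Lemma \ref{clinton} (the paper re-runs it inline), and the converse embeds $\eR(I,F)\cong\eR^r$ into $E$ after normalizing the filtration, so that $0\rt\eR^r\rt E\rt V\rt 0$ is itself an MCM approximation of the $t^{-1}$-torsion module $V$. You also supply two justifications the paper only asserts, namely $M_n=0$ for $n\ll 0$ and $t^{-s}V=0$; one small correction is that $I^nF\subseteq\Fc_n$ follows from the filtration axiom $I\Fc_n\subseteq\Fc_{n+1}$ together with $\Fc_0=F$, not from stability.
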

\begin{proof}
  Let $M \in \smod(t^{-\infty}, \eR)$. We have an exact sequence of $\eR$-modules $0 \rt Y \rt X_M \rt M\rt 0$ where $\projdim_{\eR} Y$ is finite. As $X_M$ is MCM $A$-module we get by \ref{bella-l} that there exists an MCM $A$-module $W$ and a $I$-stable filtration $\Fc$ on $W$  with $X_M = \eR( \Fc, W))$. As $t^{-1}$ is $X_M$-regular it is $Y$-regular. So $Y = \eR(V,\Gc)$ for some $A$-module $V$ and an $I$-stable filtration $\Gc$ on $V$. As $\projdim_{\eR}Y$ is finite it follows after taking a finite free resolution of $Y$ as an $\eR$-module  that $\projdim_A V$ is finite. As $M_n  = 0$ for all $n \ll 0$ it follows that $W = V$. So $\projdim_A W$ is finite. As $W$ is MCM we get $W$ is free. Thus $X_M \in \ker \Psi$.

 Conversely Let $E \in \ker \Psi$. As $E$ is an MCM $\eR$-module  there exists an  MCM $A$-module $W$ and a $I$-stable filtration $\Fc$ on $W$  with $E = \eR( \Fc, W))$, see \ref{bella-l}. As $E \in \ker \Psi$ we get $W$ is a free $A$-module. We may assume after shifting that $\Fc_n = W$ for $n <0$. Say $W = A^m$. Then we have an inclusion $i\colon \eR^m \rt E$. Set $ M = \coker i$. As $M_n = 0$ for $n \leq 0$ we get that $t^{-s}M = 0$ for some $s \geq 1$. Thus $M \in \smod(t^{-\infty}, \eR)$. We also have an exact sequence
 $$0 \rt \eR^m \xrightarrow{i} E \rt M \rt 0.$$
 As $E$ is MCM $\eR$-module and $\eR^m$ is free we get $X_M = E$ in $\CMS_\Z(\eR)$.
\end{proof}

\begin{remark}
Let $T = \bigoplus_{n \in \Z}T_n$ be a graded Noetherian ring. Let $\smod(T)$ denote the abelian category of finitely generated graded $T$-modules. Let $G_0(\smod(T))$ denotes its zeroth $K$-group.
Then there is an action of $\Z[u,u^{-1}]$ on $G_0(\smod(T))$  defined as $u [M] = [M(1)]$. If $T$ is Gorenstein then similarly there is a $\Z[u,u^{-1}]$ module structure on $G_0(\CMS_\Z(T))$ the  zeroth Grothendieck group of the stable category of all graded MCM $T$-modules. Note if $A$ is local and Gorenstein  then we define the $\Z[u,u^{-1}]$ module structure on $G_0(A)$ by setting $u[M] = [M]$.
\end{remark}
\begin{corollary}
\label{bhut}
(with hypotheses as in \ref{dev}). We have an exact sequence $$G_0(\CMS_\Z(G)) \xrightarrow{G_0(X)} G_0(\CMS_\Z(\eR)) \xrightarrow{\rho} G_0(\CMS(A)) \rt 0$$ of $\Z[u,u^{-1}]$-modules.
\end{corollary}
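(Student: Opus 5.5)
We use the standard localization sequence for Grothendieck groups of triangulated categories, together with Theorem \ref{fourth} and Proposition \ref{utah}.

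\textbf{Step 1: the Verdier localization sequence.} Let $\Kc = \ker \Psi$. This is a thick subcategory of $\CMS_\Z(\eR)$, being the kernel of a triangulated functor. For a thick subcategory of a triangulated category, the sequence
$$G_0(\Kc) \rt G_0(\CMS_\Z(\eR)) \rt G_0(\CMS_\Z(\eR)/\Kc) \rt 0$$
is exact. Surjectivity holds because the Verdier quotient has the same objects. For the kernel, the relations in the quotient come from quotient triangles, and these are isomorphic to images of triangles of $\CMS_\Z(\eR)$; an isomorphism in the quotient is a roof whose cones lie in $\Kc$. So $[E]=[E']$ in the quotient forces $[E]-[E']$ to lie in the image of $G_0(\Kc)$. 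By Theorem \ref{fourth}, $\CMS_\Z(\eR)/\Kc \cong \CMS(A)$, so the last term is $G_0(\CMS(A))$ and $\rho = G_0(\Psi)$.

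\textbf{Step 2: identify the image of $G_0(\Kc)$.} Consider the composite
$$G_0(\CMS_\Z(G)) \rt G_0(\CMS_\Z(\eR)),$$
which sends a graded MCM $G$-module $N$, regarded as an $\eR$-module killed by $t^{-1}$, to the class of its MCM approximation $X_N$. The MCM approximation functor $X\colon \smod(t^{-\infty},\eR) \rt \CMS_\Z(\eR)$ takes short exact sequences to triangles, so it induces a homomorphism on $G_0$. By \ref{dev} (d\'evissage plus \cite[13.2]{Y}), $G_0(\smod(t^{-\infty},\eR))$ is generated by classes of graded MCM $G$-modules. Every graded $G$-module also has a class in $G_0(\CMS_\Z(G))$ via its MCM approximation over $G$; I would check that for an MCM $G$-module $N$, $X^{\eR}_N$ is the image of $N$, and that $G$-free modules go to classes coming from $G_0(\Kc)$. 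By Proposition \ref{utah}, every $E\in\Kc$ is of the form $X_M$ with $M\in\smod(t^{-\infty},\eR)$. Hence $[E] = \sum \pm [X_{N_i}]$ with the $N_i$ graded MCM $G$-modules, since a filtration of $M$ by $G$-modules gives triangles among the $X$'s. Thus the image of $G_0(\Kc)$ equals the image of $G_0(X)$, and exactness at the middle follows.

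\textbf{Step 3: $\Z[u,u^{-1}]$-linearity.} All functors commute with the shift $(1)$. On $\CMS(A)$, $u$ acts trivially, and $\Psi(E(1)) = \Psi(E)$ because shifting a filtration does not change $M$ (Theorem \ref{psi}). So $\rho$ is $\Z[u,u^{-1}]$-linear.

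\textbf{Main obstacle.} The delicate point is Step 2: making $G_0(X)$ well defined on $G_0(\CMS_\Z(G))$. One must check that a free $G$-module $G(j)$ maps to zero, or at least into the image of the relations. Concretely, the MCM approximation of $G=\eR/t^{-1}\eR$ over $\eR$ is trivial in $\CMS_\Z(\eR)$, because of the exact sequence $0\rt\eR(1)\rt\eR\rt G\rt 0$. One must also check that triangles in $\CMS_\Z(G)$ map to relations, which holds because triangles come from short exact sequences of MCM $G$-modules.
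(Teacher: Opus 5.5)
Your proposal is correct and follows essentially the same route as the paper. Proposition \ref{utah} identifies $\ker\Psi$ with the essential image of the MCM approximation functor on $\smod(t^{-\infty},\eR)$. D\'evissage and \cite[13.2]{Y} reduce everything to classes of graded MCM $G$-modules, and $X_{G(m)}=\eR(m)$ kills the free classes. The localization sequence for $\ker\Psi$ then gives exactness, and linearity follows from compatibility with $(1)$. Your explicit use of Theorem \ref{fourth} to identify $G_0(\CMS_\Z(\eR)/\ker\Psi)$ with $G_0(\CMS(A))$ makes precise a step the paper covers only with ``as $\Psi$ is a triangulated functor''.
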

\begin{proof}
  We have a additive functor $X \colon \smod(t^{-\infty}, \eR) \rt \CMS_\Z(\eR)$ given by the MCM approximation functor whose image is $\ker \Psi$. We note that if $0 \rt U \rt V \rt W \rt 0$ is an exact sequence in $\smod(t^{-\infty}, \eR)$ then we have an exact sequence $0 \rt X_{U} \rt  X_V \rt X_W \rt 0$, see \cite[1.4]{KK}. This induces a triangle $X_{U} \rt X_V \rt X_W \rt \Omega^{-1}(X_U)$ in $\CMS_\Z(\eR)$.  So we get an additive map of abelian groups
  $G_0(\smod(t^{-\infty}, \eR)) \xrightarrow{G_0(X)} G_0(\CMS_\Z(\eR))$ whose image is $\mathcal{U} =\image (G(\ker \Psi) \rt G(\CMS_\Z(\eR))$ (by \ref{utah}).
  By \ref{dev} it follows that we have an additive map of abelian groups
  $G_0(\smod(G)) \xrightarrow{G_0(X)} G_0(\CMS_\Z(\eR))$ whose image is $\mathcal{U}$. Note $G_0(\smod(G)) = G_0(\CMa(G))$. Also note that $X_{G(m)} = \eR(m)$. Also $G_0(\CMS_\Z(G)) $ is the quotient of $G_0(\CMa(G))$ by the subgroup generated by $\{ [G(m)] \mid m \in \Z \}$. So we have an induced map $G_0(\CMS_\Z(G)) \xrightarrow{G(X)} G_0(\CMS_\Z(\eR))$ whose image is $\mathcal{U}$.

  As $\Psi$ is a triangulated functor we get an exact sequence of abelian groups $G_0(\ker \Psi) \rt G_0(\CMS_\Z(\eR)) \xrightarrow{\rho} G_0(\CMS(A)) \rt 0$. So we have an exact sequence of abelian groups
  $$G_0(\CMS_\Z(G)) \xrightarrow{G_0(X)} G_0(\CMS_\Z(\eR)) \xrightarrow{\rho} G_0(\CMS(A)) \rt 0.$$
  We show $G_0(X)$ and $\rho$ are  $\Z[u,u^{-1}]$-linear.
  We note that $$G_0(X)([uM]) = G_0(X)([M(1)]) = [X_{M(1)}] = [X_M(1)] = u[X_M] = uG_0(X)([M]). $$
   Similarly $G_0(X)([u^{-1}M]) = u^{-1}G_0(X)([M])$. So $G_0(X)$ is $\Z[u,u^{-1}]$-linear.
   Let $E  \in \CMS(\eR)$. We get by \ref{bella} that there exists an MCM $A$-module $W$ and a $I$-stable filtration $\Fc$ on $W$  with $E= \eR( \Fc, W))$. We note that $E(1) = \eR(\Fc(1), W)$.
   Note $\Psi(E(1)) = W$. We have $$\rho(u[E]) = \rho([E(1)]) = [W] = u[W] = u \rho([E]).$$
   Similarly   $\rho(u^{-1}[E]) = u^{-1} \rho([E]).$
   So $\rho$ is $\Z[u,u^{-1}]$-linear. The result follows.
\end{proof}

\s \label{rep} Let $T = \bigoplus_{n \in \Z}T_n$ be a graded Gorenstein ring and let $\CMS_\Z(T)$ be the stable category of graded MCM $A$-modules. We give the Grothendieck group $G_0(\CMS_\Z(T))$ a structure of $\Z[u,u^{-1}]$-module. Note if $f(u) \in \Z[u, u^{-1}]$ and $E \in \CMS_\Z(T)$ then $f(u)[E] = [W]$ where $W \in \CMS_\Z(T)$. We denote $[W]$ by $[f(u)E]$. We first note that $a\in \Z$ and $g(u) = au^i$ then $g(u)[E] = a[E(i)] $. Also note that $a[V] = [V^a]$ if $a > 0$ and $= [\Omega^{-1}(V)^{-a}]$ if $a < 0$ for any $V \in \CMS_\Z(T)$. We set $g(u)[E] = [g(u)(E)]$. From this it easily follows that
if $f(u) = \sum_{i = m}^{n}a_iu^i$ where $m,n\in \Z$ then $f(u)[E] = [\bigoplus_{i = m}^{n} (a_iu^i(E))]$.

 We then have
\begin{proposition}\label{rep-lemm}
(with hypothesis in \ref{rep}). An element $\xi \in G_0(\CMS_\Z(T))_{\Q(u)}$ is of the form $[W]/h(u)$ where $W \in \CMS_\Z(T)$ and $h(u) \in \Z[u,u^{-1}]$ is non-zero.
\end{proposition}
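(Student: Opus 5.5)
Every element of $G_0(\CMS_\Z(T))_{\Q(u)} = G_0(\CMS_\Z(T))\otimes_{\Z[u,u^{-1}]}\Q(u)$ is a finite sum $\sum_{j=1}^{s} \xi_j\otimes (p_j(u)/q_j(u))$ with $\xi_j \in G_0(\CMS_\Z(T))$, $p_j, q_j \in \Z[u,u^{-1}]$ and $q_j \neq 0$. First put everything over a common denominator $h(u) = \prod_j q_j(u)$, which is nonzero since $\Z[u,u^{-1}]$ is a domain. Writing $p_j/q_j = p_j'/h$ with $p_j' = p_j\prod_{i\neq j}q_i$, the $\Z[u,u^{-1}]$-bilinearity of the tensor product gives $\xi = \eta\otimes (1/h(u))$ with $\eta = \sum_j p_j'(u)\xi_j \in G_0(\CMS_\Z(T))$. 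It therefore suffices to show that every element $\eta$ of $G_0(\CMS_\Z(T))$ itself has the form $[W]$ for a single object $W$.

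This is the standard fact that in the Grothendieck group of a triangulated category every element is the class of an object, and I would prove it directly. Elements of $G_0(\CMS_\Z(T))$ are finite $\Z$-linear combinations $\sum_i n_i[E_i]$. In $G_0$ of a triangulated category we have $[V\oplus V'] = [V]+[V']$, coming from the split triangle $V\rt V\oplus V'\rt V'\rt \Omega^{-1}(V)$, and $[\Omega^{-1}(V)] = -[V]$, coming from the triangle $V\rt 0\rt \Omega^{-1}(V)\rt \Omega^{-1}(V)$. Hence a term with $n_i>0$ equals $[E_i^{n_i}]$ and a term with $n_i<0$ equals $[\Omega^{-1}(E_i)^{-n_i}]$, exactly as recorded in \ref{rep}. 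Summing these terms, $\eta = [W]$ where $W$ is the direct sum of all these objects, and $W\in\CMS_\Z(T)$ because the category is closed under finite sums and $\Omega^{-1}$. (Alternatively, the formula $f(u)[E] = [\bigoplus_i a_iu^i(E)]$ from \ref{rep} handles each $p_j'(u)\xi_j$ once each $\xi_j$ is a class of an object, and the direct sum then combines them.)

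Finally, the element $[W]\otimes 1/h(u)$ is what the statement writes as $[W]/h(u)$, which completes the argument. There is no real obstacle. The only points needing care are that the common denominator is nonzero, which holds because $\Z[u,u^{-1}]$ is an integral domain, and that negative integer coefficients are realized by the suspension $\Omega^{-1}$. No exactness or flatness of localization is needed, since we only rewrite a given element and never test whether an element is zero.
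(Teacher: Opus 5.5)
Your proposal is correct and follows essentially the paper's route: clear denominators with the product $h(u)$ of the finitely many nonzero denominators, then use the observation in \ref{rep} that any $\Z[u,u^{-1}]$-combination of classes is the class of a single object, built from direct sums and $\Omega^{-1}$. You additionally justify $[V\oplus V']=[V]+[V']$ and $[\Omega^{-1}(V)]=-[V]$ via explicit triangles, which the paper leaves implicit.
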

\begin{proof}
  We have $\xi = \sum_{i = 1}^{s}g_i(u)[E_i]/f_i(u)$ where $g_i(u), f_i(u) \in \Z[u,u^{-1}]$ and $f_i(u) \neq 0$ for all $i$. Then we have
  $$\xi = \frac{\sum_{i = 1}^{s}g_i^\prime(u)[E_i]}{f(u)} \text{where $f = f_1(u)\cdots f_s(u)$.} $$
  By our earlier discussion in \ref{rep} we have $\sum_{i = 1}^{s}g_i^\prime(u)[E_i] = [W]$ for some $W \in \CMS_\Z(T)$. The result follows.
\end{proof}

\s\label{rep-g}Consider the map $G_0(X) \colon G_0(\CMS_\Z(G)) \rt G_0(\CMS_\Z(\eR))$. let $M$ be an MCM $G$-module. We note that $$u^i[X_M] = [X_M(i)] = [X_{M(i)}] = [X_{u^iM}].$$
 Let $a \in \Z$. Then note that
 (i)  if $a > 0$ then $$a[X_M] = [X_M^a] = [X_{M^a}] = [X_{a M}].$$
 (ii) if $a< 0$ then $$a[X_M] = [\Omega_{\eR}^{-1}{(X_M^{-a})}] = [X_{\Omega_A(M)^{-a}}] = [X_{a M}].$$
It follows that if $f(u) \in \Z[u,u^{-1}]$ then $$f(u)[X_M] = [X_{f(u)M}].$$
We now give
\begin{proof}[Proof of Theorem \ref{fifth}]
By \ref{bhut} we have an exact sequence $$G_0(\CMS_\Z(G)) \xrightarrow{G_0(X)} G_0(\CMS_\Z(\eR)) \xrightarrow{\rho} G_0(\CMS(A)) \rt 0$$ of $\Z[u,u^{-1}]$-modules.
If $M \in \CMS(A)$ then note that $u[M] = [M]$. So $(u-1)[M] = 0$. Thus $G_0(A)_{\Q(u)} = 0$. So we have a surjection
$$ G_0(\CMS_\Z(G))_{Q(u)} \xrightarrow{G_0(X)_{\Q(u)}} G_0(\CMS_\Z(\eR))_{Q(u)}.$$
Let $\xi \in G_0(\CMS_\Z(G))_{\Q(u)}$ be such that $G_0(X)_{\Q(u)}(\xi) = 0$. By \ref{rep-lemm} we have $\xi = [M]/h(u)$ where $M \in \CMS_\Z(G)$ and $h(u) \in \Z[u,u^{-1}]$ is non-zero.
We have $0 = G_0(X)_{\Q(u)}(\xi)  =  [X_M]/h(u)$. So there exists $g(u) \in \Z[u,u^{-1}]$ non-zero such that $g(u)[X_M] = 0$. By \ref{rep-g} we get $[X_{g(u)M}] = 0$ in $G(\CMS_\Z(\eR))$. Choose $N$ with  $[N] = [g(u)M]$.
By \cite[2.4]{Th} there exists $U, V, W\in \CMS_\Z(\eR)$ and triangles
\begin{align*}
U &\rt V\oplus X_N \rt W \rt \Omega^{-1}(U), \quad \text{and} \\
U &\rt V \rt W \rt \Omega^{-1}(U).
\end{align*}
We consider the triangulated functor  $-\otimes \eR/t^{-1}\eR \colon \CMS_\Z(\eR) \rt \CMS_\Z(G)$. So we obtain triangles in $\CMS_\Z(G)$
\begin{align*}
U/t^{-1}U &\rt V/t^{-1}V\oplus X_N/t^{-1}X_N \rt W/t^{-1}W \rt \Omega^{-1}(U/t^{-1}U), \quad \text{and} \\
U/t^{-1}U &\rt V/t^{-1}V \rt W/t^{-1}W \rt \Omega^{-1}(U/t^{-1}U)
\end{align*}
It follows that $[X_N/t^{-1}X_N] = 0$ in $G(\CMS_\Z(G))$.
We have an exact sequence $0 \rt F \rt X_N \rt N \rt 0$ in $\eR$ with $F$ free $\eR$-module. After tensoring with $G$ we obtain
$$0 \rt N(+1) \rt F/t^{-1}F \rt X_N/t^{-1}X_N \rt N \rt 0.$$
In $G(\CMS_\Z(G))$  we obtain
$$(1-u)[N] = [X_N/t^{-1}X_N ] = 0. $$
So $[N] = 0$ in $G_0(\CMS_\Z(G))_{\Q(u)}$. Hence $[M] = 0$ and so $\xi = 0$ in \\ $G_0(\CMS_\Z(G))_{\Q(u)}$. Thus the map $G_0(X)_{\Q(u)}$ is bijective.
\end{proof}

\s Examples: We give several examples which illustrate our results of this section.
Let us recall our results. Let $(A,\m)$ be a  Gorenstein local ring and let $I$ be an ideal in $A$ with $\eR = \eR(I)$ Gorenstein (equivalently $G = G_I(A)$ is Gorenstein). We proved
there exists an exact sequence
 $$G_0(\CMS_\Z(G)) \xrightarrow{G_0(X)} G_0(\CMS_\Z(\eR)) \xrightarrow{\rho} G_0(\CMS(A)) \rt 0$$ of $\Z[u,u^{-1}]$-modules.
 This induces an isomorphism  of $\Q(u)$-vector spaces
 $$ G_0(\CMS_\Z(G))_{Q(u)} \xrightarrow{G_0(X)_{\Q(u)}} G_0(\CMS_\Z(\eR))_{Q(u)}.$$

(i) Assume the case when  $A$ is one-dimensional. Then

(a) $G_0(\CMS(\eR))$ is a finitely generated $\Z[u,u^{-1}]$-module.

(b) If $I$ is $\m$-primary then $\rank_{\Q(u)} G_0(\CMS_\Z(\eR))_{Q(u)} \leq \sharp(\Min (G))$  (here $\Min(G)$ is the set of minimal primes of $G$ and $\sharp(S)$ denotes the cardinality of a set $S$).

Proof of (a):  We note that $G_0(\smod(G))$ is generated as a $\Z[u,u^{-1}]$-module by $\{[G/P] \mid P \in \Spec^*(G) \}$ (here $\Spec^*(G)$ is the set of homogeneous primes of $G$). Also note that $G_0(\md(A))$  is generated as a $\Z$-module by $\{[A/P] \mid P \in \Spec(A) \}$.  Note $\Spec^*(G)$ and $\Spec(A)$ are finite sets. So (a) follows.

Proof of (b) Note the assertion follows from the following:

Claim: Let $U$ be a graded $G$-module of finite length. Then $[U]$ in $G_0(G)_{\Q(u)}$ is zero.

Proof of Claim: Taking a graded  filtration of $U$ we get that $[U] = h(u)[k]$ for some $h(u) \in \Z[u,u^{-1}]$.
Let $P$ be a minimal prime of $G$. Let $R = G/P$. Note  $R_0$ is  a field as $I$ is $\m$-primary. We  can choose $x\in R_s$  with $s > 0$ which is $R$-regular.
Either case we have an exact sequence  $0 \rt R(-\deg x) \rt R \rt R/(x) \rt 0$. So $(1- u^{-\deg x})[R] = [R/(x)] $. So $[k]$ is  zero in $G_0(\CMS_\Z(G))_{\Q(u)}$. The result follows.

(ii) Let $k$ be a field of characteristic not two and let $(V, Q)$ be a non-degenerate quadratic space over $k$. So there exists a basis $\{ x_1, \ldots, x_n \}$ of $V^*$ such that $Q = \sum_{i=1}^{n}a_ix_i^2$ and $a_i \neq 0$ for all $i$. Set $A = k[[X_1, \ldots, X_n]]/(Q)$. Then $A$ is of finite representation type, i.e., $A$ has only finitely many MCM modules up to isomorphism, see \cite[14.10]{Y}. Let $\m$ be the maximal ideal of $A$. Then $G = G_\m(A) = k[X_1, \ldots, X_n]/(Q)$. Furthermore the completion of $G$ is $A$. So $G$ is of finite representation type, i.e., $G$ has finitely many graded MCM modules up to isomorphism and shifts, see \cite[15.1]{Y}. It follows that $G_0(\CMS(\eR))$ is a finitely generated $\Z[u,u^{-1}]$-module.

Now assume that $\dim A$ is positive and even. Then $A$ has only one non-free MCM $A$-module. It follows that $G$ has only one non-free graded MCM-module, \cite[15.2]{Y}. Let $M$ be a nonfree MCM $A$-module. Then $G(M)$ is MCM and non-free $G$-module, see \cite[Theorem 16]{P-1}. We have an exact sequence $0 \rt M \rt F \rt M \rt 0$ of $A$-modules where $F$ is a free $A$-module.
This induces an exact sequence $0 \rt G(M)(-1) \rt G(F) \rt G(M) \rt 0$, see \cite[14.3, 14.4]{Y}. So $(1 + u^{-1})[G(M)] = 0$ in $G_0(\CMS_\Z(G))_Q$. It follows that $G_0(\CMS_\Z(G))_{\Q(u)} = 0$. So $G_0(\CMS_\Z(\eR))_{\Q(u)} = 0$.

(iii) Let $\bx = x_1,\ldots,x_d$ be a system of parameters of $A$. Set $I = (\bx)$. Then $G_0(\CMS_\Z(\eR))_\Q = G_0(\CMS(A))_\Q$ and $G_0(\CMS_\Z(\eR))_{\Q(u)} = 0.$

It suffices to prove $G_0(\CMS_\Z(G))_\Q = 0$. We note that $G = A/I[X_1, \ldots, X_d]$. We note that $G_{red} = k[X_1, \ldots, X_d] = R$. So $G_0(\smod(G)) = G_0(\smod(R))$. We note that the only graded MCM $R$-modules are $R(i)$ for some $i \in \Z$. Let $A/I = U_0 \supseteq U_1 \supseteq U_2 \supseteq \cdots \supseteq U_s  \supseteq 0$ be a filtration of $A/I$ as an $A$-module with quotients $k$. Tensoring with $G$ yields an equation $[G] = s[R]$ in $G_0(\smod(G))$. It follows that $G_0(\CMS(G))_\Q = 0$.
\section{Proof of Theorem \ref{equi}}
In this section we give a proof of Theorem \ref{equi}. Let $(A,\m)$ be a Gorenstein local ring and let $I$ be an  $\m$-primary ideal in $A$.
Throughout  set  $\eR = \eR_I(A)$ and $G = G_I(A)$. We assume $\eR$(and so $G$) is Gorenstein. We need a preparatory result first.
\begin{lemma}\label{rain}
Let $E \in \CMS^0_\Z(\eR)$.
Then there exists a free $A$-module $W$ and an $I$-stable filtration $\Fc$ on $W$ such that $\eR(\Fc, W) \cong  E$.
\end{lemma}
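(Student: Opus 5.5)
By Lemma \ref{bella-l} we can write $E \cong \eR(\Fc, W)$ for some MCM $A$-module $W$ and some $I$-stable filtration $\Fc$ on $W$. It therefore remains to show that $W$ is free. My plan is to localize at the homogeneous prime
$$P = \m A[t,t^{-1}] \cap \eR,$$
which is the contraction of the $^*$-maximal ideal of $\eR_{t^{-1}} = A[t,t^{-1}]$. The key point is that $P$ lies in the punctured homogeneous spectrum of $\eR$, that is, $P$ is not the $^*$-maximal ideal $\mathfrak{M} = (\m, It, t^{-1})$ of $\eR$. Indeed $t^{-1} \notin P$, because $t^{-1}$ is a unit in $A[t,t^{-1}]$.

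Since $E \in \CMS^0_\Z(\eR)$, the localization $E_P$ is free over $\eR_P$. Because $t^{-1} \notin P$, the ring $\eR_P$ is a further localization of $\eR_{t^{-1}} = A[t,t^{-1}]$, namely $\eR_P = A[t,t^{-1}]_{\n}$ with $\n = \m A[t,t^{-1}]$. Hence
$$E_P = (E_{t^{-1}})_{\n} = (W[t,t^{-1}])_{\n} = (W \otimes_A A[t,t^{-1}])_{\n},$$
and this module is free over $A[t,t^{-1}]_\n$.

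The map $A \rt A[t,t^{-1}]_\n$ is a flat local homomorphism: it is flat, and its closed fibre is $k[t,t^{-1}]_{(0)} = k(t)$. Freeness descends along flat local homomorphisms. Concretely, $\Tor^A_1(k, W)\otimes_A A[t,t^{-1}]_\n = 0$ forces $\Tor^A_1(k,W) = 0$ by faithful flatness, and so $W$ is free.

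The only step needing care is the identification of $\eR_P$ with $A[t,t^{-1}]_\n$, and the check that $P \neq \mathfrak{M}$. Both follow from $t^{-1}\notin P$ and from $\eR_{t^{-1}} = A[t,t^{-1}]$. As far as I can see, the hypothesis that $I$ is $\m$-primary is not needed for this lemma.
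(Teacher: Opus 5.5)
Your proof is correct and is essentially the paper's argument. Both start from Lemma \ref{bella-l} and use that $E$ is free at the homogeneous primes of $\eR$ not containing $t^{-1}$, applied to $E_{t^{-1}} = W[t,t^{-1}]$. The only difference is the final step: the paper concludes that $W[t,t^{-1}]$ is a graded projective, hence free, module over the $^*$-local ring $A[t,t^{-1}]$ and then reads off $W$ in degree zero, whereas you localize at the single prime $\m A[t,t^{-1}]\cap\eR$ and descend freeness along the faithfully flat local map $A \rt A[t,t^{-1}]_\n$.
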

\begin{proof}
By \ref{bella-l}
  there exists an MCM $A$-module $W$ and an $I$-stable filtration $\Fc$ on $W$ such that $\eR(\Fc, W) \cong  E$. We have $E_{t^{-1}} = W[t,t^{-1}]$ an $A[t,t^{-1}]$-module. However $E \in \CMS^0_\Z(\eR)$. So $E_{t^{-1}} = W[t,t^{-1}]$ is a projective graded $A[t,t^{-1}]$-module. But $A[t,t^{-1}]$ is a $*$-local ring. So graded projectives are free. Thus $W$ is a free $A$-module. The result follows.
\end{proof}
We now give
\begin{proof}[Proof of Theorem \ref{equi}]
Let $\n$ and $\n^\prime$ be the maximal homogeneous  ideals of $\eR$ and $\wh{\eR}$-respectively. Then note that $\n \wh{\eR} = \n^\prime$. We have $k = \eR/\n = \wh{\eR}/\n^\prime = k \otimes_{\eR}\wh{\eR}$. It follows that if $Y$ is any graded $\eR$-module of finite length then $Y\otimes_{\eR}\wh{\eR} \cong Y$ as $\eR$-modules.
We note if $U, V \in \CMS_\Z^0(\eR)$ then $\sHom_{\eR}(U, V)$ has finite length as an $\eR$-module. So
$$\Hom_{\wh{\eR}}(U\otimes_{\eR}\wh{\eR}, V\otimes_{\eR}\wh{\eR}) \cong \Hom_{\eR}(U, V) \otimes_{\eR} \wh{\eR} \cong \Hom_{\eR}(U, V).$$
Thus $\Phi^\sharp $ is fully faithful.
We show that it is dense.
Let $E \in \CMS^0_\Z(\wh{R})$. By \ref{rain} there exists a free $\wh{A}$-module $W$ an $I\wh{A}$-stable filtration $\Fc$ on $W$ such that $\eR(\Fc, W) \cong  E$. By \ref{utah} there exists a $\wh{\eR}/(t^{-s})$-module $M$ with its MCM approximation $X_M = E$. But as $I$ is $\m$-primary we have $\eR/(t^{-s}) \cong \wh{\eR}/(t^{-s})$. So $M$ can be considered as an $\eR$-module. We note that if $Y$ is MCM approximation of $M$ as an $\eR$-module then $Y\otimes_\eR\wh{\eR}$ is the MCM approximation of $M$ as an $\wh{\eR}$-module. So $E \cong Y\otimes_\eR\wh{\eR}$ in $\CMS_\Z(\wh{\eR})$. Thus $\Phi^\sharp $ is dense. The result follows.
\end{proof}
\section{Proof of Theorem \ref{ri}}
Throughout this section $A$ and $B$ are Gorenstein local rings. In this section we give a proof of Theorem \ref{ri}. We need a few preliminary results.

\begin{lemma}\label{pet}
Assume $\dim A = d$.
The following assertions are equivalent:
\begin{enumerate}[\rm (i)]
  \item $A$ satisfies $R_i$.
  \item $\dim \sHom_A(M, M) \leq d -i - 1$ for all MCM $A$-modules.
\end{enumerate}
\end{lemma}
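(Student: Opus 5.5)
The plan is to identify the support of the stable endomorphism ring $\sHom_A(M,M)$ with the non-free locus of $M$, and then compare $R_i$ with the dimensions of these loci.

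\textbf{Step 1 (support of the stable endomorphism ring).} For any MCM $A$-module $M$, stable Hom commutes with localization: $\sHom_A(M,M)_P \cong \sHom_{A_P}(M_P,M_P)$. This holds because $P(M,M)$ is the image of $\Hom_A(M,F)\otimes \Hom_A(F,M)\to \Hom_A(M,M)$ for a free cover $F\to M$, and images localize. Moreover $\sHom_{A_P}(M_P,M_P)=0$ if and only if the identity of $M_P$ factors through a free module, i.e. if and only if $M_P$ is free. Hence
$$\Supp \sHom_A(M,M)=\{P \mid M_P \text{ is not free}\}.$$
This is a closed set, since $\sHom_A(M,M)$ is a finite $A$-module by \ref{center-eta}. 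Consequently $\dim \sHom_A(M,M)=\max\{\dim A/P \mid M_P \text{ not free}\}$. Since $A$ is Gorenstein, hence \CM, we have $\dim A/P = d-\htt P$. So condition (ii) says exactly that $M_P$ is free for every prime $P$ with $\htt P\le i$.

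\textbf{Step 2 ((i)$\Rightarrow$(ii)).} If $A$ satisfies $R_i$ and $\htt P\le i$, then $A_P$ is regular. The localization $M_P$ is MCM over $A_P$ (or zero), so it is free. By Step 1 the support of $\sHom_A(M,M)$ consists of primes of height $\ge i+1$, which gives $\dim\le d-i-1$.

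\textbf{Step 3 ((ii)$\Rightarrow$(i)).} Suppose $P$ has $\htt P = h\le i$ and $A_P$ is not regular. Take $M=\Omega^d_A(A/P)$, which is MCM. Then $M_P\cong \Omega^{d}_{A_P}(\kappa(P))\oplus(\text{free})$. Since $A_P$ is not regular, $\kappa(P)$ has infinite projective dimension over $A_P$, so $M_P$ is not free. Then $P\in\Supp\sHom_A(M,M)$, so $\dim\sHom_A(M,M)\ge d-h\ge d-i$, contradicting (ii).

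\textbf{Main obstacle.} The delicate point is Step 1, namely that the ideal of maps factoring through projectives localizes correctly, so that $\sHom$ commutes with localization. The argument via the image of the composition map from $\Hom_A(M,F)\otimes\Hom_A(F,M)$ should handle it. One must also keep track of the convention that $\dim$ of the zero module is $-\infty$ or $-1$, so that the case $M$ free, and the case $i\ge d$, are consistent with the inequality in (ii).
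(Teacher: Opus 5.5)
Your proposal is correct and follows essentially the same route as the paper: you identify the support of $\sHom_A(M,M)$ with the non-free locus of $M$ via localization, and for (ii)$\Rightarrow$(i) you test with $M=\Omega^d_A(A/P)$, exactly as the paper does. Your added justification that stable Hom commutes with localization, and your explicit use of $\dim A/P = d-\htt P$, supply details the paper leaves implicit.
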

\begin{proof}
  (i) $\implies$ (ii): Let $M$ be a non-free MCM $A$-module. Let $P$ be a prime with $\height P = s$ with $P$ in the support of $\sHom_A(M, M)$. Then $\sHom_{A_P}(M_P, M_P) = \sHom_A(M, M)_P \neq 0$. So $M_P$ is not free as an $A_P$-module. Thus $A_P$ is not regular local. So $\height P > i$. It follows that $\dim \sHom_A(M, M) \leq d -i - 1$.

  (ii) $\implies$ (i): Let $P$ be a prime in $A$ with $\height P \leq  i$. Set $M = \Omega^d_A(A/P)$. As $\dim \sHom_A(M, M) \leq d -i - 1$ we get that $P \notin \Supp(\sHom_A(M, M))$.
  So $\sHom_A(M, M)_P = 0$. It follows that  $\sHom_{A_P}(M_P, M_P) = 0$. So $M_P$  is a free $A_P$-module. Therefore $\projdim_{A_P} \kappa(P)$ is finite. So $A_P$ is regular. Thus $A$ satisfies $R_i$.
\end{proof}
The following result is easy to prove.
\begin{proposition}
\label{baby} Let $\phi \colon  R \rt S$ be a ring homomorphism with $S$ a finite $R$-module via $\phi$. Let $M$ be a finite $S$-module. Consider $M$ as an $R$-module via $\phi$. Then $M$ is a finite $R$-module and $\dim_R M = \dim_S M$ (here $\dim_R M$ and $\dim_S M$ are the dimension of $M$ as an $R$-module (and respectively as an $S$-module)).
\end{proposition}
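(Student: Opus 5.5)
The plan is to prove the two assertions separately: that $M$ is a finite $R$-module, and that the two dimensions coincide. For finiteness, $M$ is a quotient of $S^n$ for some $n$. Since $S$ is a finite $R$-module via $\phi$, so is $S^n$, and hence so is its quotient $M$. The $R$-module structure on $M$ is the one induced by $\phi$, so no further check is needed.

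For the dimension statement, I would reduce to the case where $M$ is a faithful module over a ring.
\begin{itemize}
\item Set $J = \operatorname{ann}_S(M)$. Then $\dim_S M = \dim S/J$.
\item Set $\mathfrak{a} = \operatorname{ann}_R(M)$. Then $\mathfrak{a} = \phi^{-1}(J)$, since $r$ kills $M$ exactly when $\phi(r)$ does. Hence $\dim_R M = \dim R/\mathfrak{a}$.
\item $\phi$ induces an injective ring map $R/\mathfrak{a} \rt S/J$, and $S/J$ is a finite module over $R/\mathfrak{a}$.
\end{itemize}
So $S/J$ is an integral extension of the image of $R/\mathfrak{a}$. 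By the Cohen--Seidenberg theorems (lying over and incomparability), an integral extension of rings preserves Krull dimension: $\dim S/J = \dim R/\mathfrak{a}$. Combining this with the two displayed equalities gives $\dim_R M = \dim_S M$.

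The only point needing care is the identification $\mathfrak{a} = \phi^{-1}(J)$. It is immediate from the definitions, but it is exactly what makes $R/\mathfrak{a} \rt S/J$ injective. Injectivity is needed because lying over requires an honest ring extension. Everything else is the standard fact that integral extensions preserve dimension.

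An alternative route uses supports. One has $\Supp_R M = \phi^{*}(\Supp_S M)$, since for $\mathfrak{p} \in \Spec R$ the localization $M_{\mathfrak{p}}$ is a localization of $M$ over $S_{\mathfrak{p}}$. One then compares chain lengths using going-up and incomparability. The annihilator reduction above is cleaner, so I would use it.
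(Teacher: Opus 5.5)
Your proof is correct. The paper gives no argument here; it only says the proposition ``is easy to prove.'' Your reduction via $\mathfrak{a} = \phi^{-1}(J)$ to the finite, hence integral, extension $R/\mathfrak{a} \hookrightarrow S/J$ is the standard way to fill that in. One small point: the inequality $\dim R/\mathfrak{a} \leq \dim S/J$ needs going-up in addition to lying over, but going-up follows from lying over, so appealing to the Cohen--Seidenberg theorems covers it.
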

We now give
\begin{proof}[Proof of Theorem \ref{ci}]
As $A$ (and so $B$) are not hypersurfaces we get by \ref{first} that $\dim A = \dim B = d$(say). First assume that $A$ satisfies $R_i$. Let $M$ be an MCM $B$-module. Let $N = \Phi^{-1}(M)$. Then
we have an isomorphism $\sHom_A(N, N) \cong \sHom_B(M, M)$. Let $Z_M = $ center of $\sHom_B(M,M)$ and let \\ $Z_N = $ center of $\sHom_A(N,N)$. Then $Z_M \cong Z_N$. Furthermore $Z_M$ (respectively $Z_N$) are finite $B$ (respectively $A$)-module). By \ref{pet} we get $\dim_A \sHom_A(N, N) \leq d - i -1$. By \ref{baby} we get $\dim_{Z_N} \sHom_A(N, N) \leq d - i -1$. As $Z_M \cong Z_N$ and
$\sHom_A(N, N) \cong \sHom_B(M, M)$ we get that $\dim_{Z_M} \sHom_B(M,M) \leq d - i -1$. So again by \ref{baby} we get $\dim_B \sHom_B(M, M) \leq d -i - 1$. The result follows from \ref{pet}.
\end{proof}

\section{Proof of Theorem \ref{ci}}
In this section we give
\begin{proof}[Proof of Theorem \ref{ci}]
Assume $A$ is a complete intersection on the punctured spectrum of $A$. Let $M$ be a MCM $A$-module. Let $Z_M = $ center of $\sHom_A(M, M)$. We note that $Z_M$ is a finite $A$-module.
Let $\q$ be a non-maximal prime ideal of $Z_M$. We note that for all $i \geq 1$, $\Ext^i_A(M, M)$ is a $Z_M$-module.

Claim-1: If $\ell_{(Z_M)_\q}(\Ext^i_A(M,M)_\q)$ is finite for all $i \geq 1$ then there exists $c$ such that $\limsup_n \ell_{(Z_M)_\q}(\Ext^i_A(M,M)_\q) /n^c$ is finite.

Let $P = \q \cap A$. Then $P \neq \m$ as $Z_M$ is a finite $A$-module. As $A_P$ is a complete intersection it follows that $\Ext^{\geq 1}_{A_P}(M_P, M_P)$ is a finite $A_P[t_1,\ldots, t_m]$-module for some variables $t_1,\ldots,t_m$ of degree $2$, see \cite[4.9]{AGP}. As $\Ext^i_A(M,M)_\q$ is a further localization of $\Ext^i_{A_P}(M_P, M_P)$ it follows that $\Ext^{\geq 1}_{(Z_M)_\q}(M, M)$ is a finite $(Z_M)\q[t_1,\ldots, t_m]$-module. The result follows.

Now let $Q$ be non-maximal prime of $B$. Set $N = \Omega^{\dim B}_B(B/Q)$. Note for $i \geq 1$ the $B_Q$-module $\Ext^i_B(N, N)_Q$ has finite length. Let $Z_N = $ center of $\sHom_B(N,N)$. Let $\q_1, \ldots \q_r$ be all the primes of $Z_N$ with $\q_i \cap B = Q$. We note that $\q_j$ are not maximal ideals of $Z_N$. Let $M =\Phi^{-1}(N)$. Then $Z_M \cong Z_N$. Also note that for $i \geq 1$ we have
$$\Ext^i_A(M, M) \cong \sHom_A(\Omega_A^i(M), M) \cong \sHom_B(\Omega_A^i(N), N) \cong \Ext^i_B(N, N). $$
It follows from Claim-1 that for all $j = 1, \ldots,r$ there exists  $c_i$ such that  $$\limsup_n \ell_{(Z_N)_{\q_i}}(\Ext^i_B(N,N)_{\q_j}) /n^{c_j} \quad \text{ is finite}.$$
Let $c = \max\{c_j \mid j = 1, \ldots, r \}$. Then by \ref{bella} it follows that  $$\limsup_n \ell_{B_Q}(\Ext^i_B(N,N)_{Q}) /n^{c} \quad \text{ is finite}.$$
So $\cx_{B_Q}(\kappa(Q))$ is finite. It follows that $B_Q$ is a complete intersection, see \cite{G-ci} (also see \cite[8.1.2]{A}). Thus $B$ is a complete intersection on the punctured spectrum of $B$.
\end{proof}
\section{Proof of Theorems \ref{rigid} and \ref{rigid-ci}}
In this section we give proofs of Theorems \ref{rigid} and \ref{rigid-ci}. We first make a definition.
\begin{definition}\label{tri-f}
 Let $\C$ be a triangulated category with shift operator $\Sigma$. Assume $\C$ is skeletally small. Let $I(\C)$ denote the set consisting of isomorphism classes of objects in $\C$.

We say a function $\xi \colon I(\C) \rt \Z$ is a \emph{triangle
 function} on $\C$, see \cite[Introduction]{P-liason} if it satisfies the following properties:
\begin{enumerate}
\item
$\xi([M]) \geq 0$ for all $M \in \C$.
\item
$\xi([M]) = 0$ if and only if $M = 0$ in $\C$ .
\item
$\xi([M_1 \oplus M_2]) = \xi([M_1]) + \xi([M_2])$ for all $M_1, M_2 \in \C$.
\item
\emph{
(sub-additivity)} If $M \rt N \rt L \rt \Sigma (M)$ is an exact triangle in $\C$ then
\begin{enumerate}
\item
$\xi([N]) \leq \xi([M]) + \xi([L])$.
\item
$\xi([L]) \leq \xi([N]) + \xi([\Sigma(M)])$.
\item
$\xi([\Sigma(M)]) \leq \xi([L]) + \xi([\Sigma(N)])$.
\end{enumerate}
\end{enumerate}
\end{definition}
\begin{remark}\label{rotation}
\begin{enumerate}[\rm (i)]
\item
Since rotations of exact triangles are exact it follows that if $\xi$ satisfies (4)(b) for all exact triangles then it will also satisfy 4(a),(c).
\item
Axiom (3) implies that $\xi([M]) = 0$ if $M = 0$. However note that axiom (2) also implies that if $\xi([M]) = 0$ then $M = 0$.
\end{enumerate}
\end{remark}

\s If $\C = \CMS(A)$ for a Gorenstein ring then it is not difficult to check that $\xi_j([M]) = \ell_A(\Tor^A_j(M, k))$ are triangle functions on $\CMS(A)$ for all $j \geq 1$.

Next we show
\begin{theorem}
\label{infy}
Suppose $\C$ is a skeletally small triangulated category and let $\D$ be a proper thick subcategory such that the quotient $\C/\D$ has a triangle function $\xi$. Then $(\C, \D)$ is a rigid pair.
\end{theorem}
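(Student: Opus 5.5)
The plan is to bound $\alpha_{\C,\D}(t_s)$ uniformly by the single number $\xi([M])$, computed in the Verdier quotient $\C/\D$. Throughout I use, as the definition of $\alpha_{\C,\D}$ implicitly requires, that $\C$ is KRS, so every object decomposes into finitely many indecomposables. Let $\pi\colon \C\rt \C/\D$ be the quotient functor. The basic fact I rely on is standard: because $\D$ is thick, in particular closed under direct summands, an object $X\in\C$ satisfies $\pi(X)\cong 0$ if and only if $X\in\D$.

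Fix $M\in\C$ and a triangle $t_s\colon N_s\rt E_s\rt M\rt N_s[1]$ in $r(\C,\D,M)$. Applying the triangulated functor $\pi$ gives a triangle in $\C/\D$. In it $\pi(N_s)\cong 0$, and hence $\pi(N_s[1])\cong 0$ as well. It follows that the map $\pi(E_s)\rt\pi(M)$ is an isomorphism.

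Next decompose $E_s=E'\oplus E''$. Here $E''$ collects the indecomposable summands lying in $\D$, and $E'=L_1^{b_1}\oplus\cdots\oplus L_r^{b_r}$ collects those not in $\D$, so that $\alpha_{\C,\D}(E_s)=b_1+\cdots+b_r$. Since $\pi(E'')=0$, we get $\pi(E')\cong\pi(E_s)\cong\pi(M)$.

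Now apply the triangle function, using additivity (axiom (3)):
$$\xi([\pi(M)])=\xi([\pi(E')])=\sum_{j=1}^r b_j\,\xi([\pi(L_j)]).$$
Each $L_j\notin\D$, so $\pi(L_j)\neq 0$ by the basic fact above. Axioms (1) and (2) then give $\xi([\pi(L_j)])\geq 1$. Therefore $\alpha_{\C,\D}(t_s)=\sum_j b_j\leq \xi([\pi(M)])$.

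This bound is independent of $s$, so $r_0(\C,\D,M)\leq \xi([\pi(M)])<\infty$. The argument for $l(\C,\D,M)$ is identical. For a triangle $M\rt E_s\rt N_s\rt M[1]$ with $N_s\in\D$, the map $\pi(M)\rt\pi(E_s)$ is an isomorphism, and the same estimate gives $l_0(\C,\D,M)\leq\xi([\pi(M)])$. Hence $(\C,\D)$ is a rigid pair.

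The only point needing care is the fact that an object becomes zero in $\C/\D$ exactly when it lies in $\D$. This holds because $\D$ is thick: $\pi(X)=0$ means $X$ is a direct summand of an object of $\D$, hence lies in $\D$. Interestingly, the sub-additivity axiom (4) is not needed; only positivity, non-degeneracy and additivity of $\xi$ are used. Properness of $\D$ plays no role beyond the standing hypotheses.
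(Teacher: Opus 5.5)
Your proof is correct and takes the same approach as the paper. The paper likewise notes that $E_s\cong M$ in $\C/\D$ and bounds $\alpha_{\C,\D}(E_s)$ by $\xi([M])$. You have supplied the details the paper leaves implicit: the decomposition $E_s=E'\oplus E''$, additivity of $\xi$, and the fact that an indecomposable outside the thick subcategory $\D$ has nonzero image in the Verdier quotient, so that $\xi\geq 1$ on it.
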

\begin{proof}
Let $t_s  \colon N_s \rt E_s \rt M \rt N_s[1]$
 with $ t_s$   a triangle and $ N_s \in \D$. Then  $M \cong E_s$ in $\C/\D$. Note $\alpha_{\C, \D}(E) \leq \xi([M]) $. It follows that $r_0(\C,\D, M)$ is finite for all $M \in \C$. Similarly $l_0(\C,\D, M)$ is finite for all $M \in \C$. The result follows.
\end{proof}

Next we give
\begin{proof}[Proof of Theorem \ref{rigid}]
We note that we have an induced triangle functor $$\ov{\Psi} \colon \CMS(A)/\ker \Psi \rt \CMS(B),$$ whose kernel is zero. Let $\xi$ be any triangle function on $\CMS(B)$. Note $\CMS/\ker \Psi$ is skeletally small. Define $\eta \colon I(\CMS/\ker \Psi) \rt \Z$ with
$\eta([M]) = \xi(\Psi(M))$.

Claim $\eta$ is a triangle function on $\CMS/\ker \Psi$. We verify the axioms (1)-(4) of
definition \ref{tri-f}.

Clearly $\eta$ satisfies (1) and (3).

(2) If $\eta[M]) = 0$ then $\xi(\Psi(M)) = 0$. As $\xi$ is a triangle function on $\CMS(B)$ we get $\Psi(M) = 0$. So $M \in \ker \Psi$. Thus $M = 0$ in $\CMS(A)/\ker \Psi $.

(3) Let $M \rt N \rt L \rt \Omega^{-1}(M)$ be a triangle in $\CMS(A)/\ker \Psi $. As $\ov{\Psi}$ is a triangle function we obtain that
$$\Psi(M) \rt \Psi(N) \rt \Psi(L) \rt \Omega^{-1}(\Psi(M))$$
is a triangle in $\D$. So we obtain
$$\xi(\Psi(L)) \leq \xi(\Psi(N) + \xi(\Omega^{-1}(\Psi(M)).$$
So we obtain
$\eta([L]) \leq \eta([N]) + \eta([\Om^{-1}(M)])$.
Thus $\eta$ satisfies 4(b).
Since rotations of exact triangles are exact it follows that if $\xi$ satisfies 4(a),(c).
Thus $\eta$ is a triangle function on $\CMS(A)/\ker \Psi$.
By \ref{infy} we get that $(\CMS(A),\ker \Psi)$  is a rigid pair.
\end{proof}

\s \emph{Examples:}

(1) Let $(A, \m)$ be a Henselian Gorenstein local ring with a prime $P \neq \m$ such that $A_P$ is  not regular.
Consider the map $\Psi \colon \CMS(A) \rt \CMS(A_P)$ given by $M \rt M_P$. Clearly $\Psi$ is a triangulated functor. If $M$ is free on the punctured spectrum of $A$ then $\Psi(M) = 0$. So $\ker \Psi \neq 0$. Set $N = \Omega^d_A(A/P)$ where $d  = \dim A$. Then note that
$N_P = \Omega^d_{A_P}(\kappa(P))$ is not a free $A_P$ module as $A_P$ is not regular. So $\ker \Psi \neq \CMS(A)$.
Thus $\ker(\Psi)$ is a proper thick subcategory of $\CMS(A)$. By Theorem \ref{rigid} we get that $(\CMS(A), \ker\Psi)$ is a rigid pair.

(2) Let $(A, \m)$ be a Henselian Gorenstein local ring which is not a complete intersection. Let
$\bx = x_1, \ldots, x_c \in \n^2$ be an $A$-regular sequence. Set $B = A/(\bx)$. Let $\Psi \colon \CMS(B) \rt \CMS(A)$ be the MCM approximation functor.
We have $\Psi$ is a triangulated functor, see \cite[p.\ 740]{BJM}. By \cite[3.1]{AGP},  there exists non-free  MCM $B$-module $M$  such that $\projdim_A M < \infty$. So the MCM approximation of $M$ as an $A$-module is free. Thus $\Psi(M) = 0$. Thus $\ker \Psi \neq 0$. Now let $N = \Omega^{\dim B}_B(k)$. If $\projdim_A  N < \infty$ then it follows from Gulliksen \cite[3.1]{G} that $\cx_B k \leq c$.   So $B$ is a complete intersection, see \cite{G-ci} (also see \cite[8.1.2]{A}). Thus $A$ is a complete intersection which is a contradiction. It follows that $\projdim_A  N = \infty$. So the MCM  approximation of $N$ is not free. Ihus $\Psi(N) \neq 0$. Thus $\ker \Psi \neq \CMS(B)$.
Therefore $\ker(\Psi)$ is a proper thick subcategory of $\CMS(B)$. By Theorem \ref{rigid} we get that $(\CMS(B), \ker\Psi)$ is a rigid pair.

\s \label{t-f} Let $\C$ be an essentially small triangulated category  with shift operator $\Sigma$ and let $I(\C)$ be the set of isomorphism classes of objects in $\C$. By a \emph{weak triangle function}, see \cite[2.1]{P-reg} on $\C$ we mean a function $\xi \colon I(\C) \rt \Z$ such that
\begin{enumerate}
  \item $\xi(X) \geq 0$ for all $X \in \C$.
  \item $\xi(0) = 0$.
  \item $\xi(X \oplus Y) = \xi(X) + \xi(Y)$ for all $X, Y \in \C$.
  \item $\xi(\Sigma X ) = \xi(X)$ for all $X \in \C$.
  \item If $X \rt Y \rt Z \rt \Sigma X $ is a triangle in $\C$ then
   $\xi(Z) \leq \xi(X) + \xi(Y)$.
\end{enumerate}
\s Set $$\ker \xi = \{ X \mid \xi(X) = 0 \}.$$
The following result cf., \cite[2.3]{P-reg} is essentially only an observation.
\begin{lemma}
\label{ker-lemma}(with hypotheses as above)
$\ker \xi $ is a thick subcategory of $\C$.
\end{lemma}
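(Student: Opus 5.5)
We check directly that $\ker \xi = \{X \mid \xi(X) = 0\}$ satisfies the defining properties of a thick subcategory: it contains $0$, is closed under isomorphisms, under $\Sigma^{\pm 1}$, under extensions (the two-out-of-three property for triangles), and under direct summands. Each property follows from one of the axioms (1)--(5) of \ref{t-f}, using non-negativity (1) to turn inequalities into equalities.

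\emph{Zero, isomorphisms, shifts and summands.} Axiom (2) gives $0 \in \ker \xi$. Since $\xi$ is defined on $I(\C)$, membership in $\ker\xi$ depends only on the isomorphism class. Axiom (4) gives $\xi(\Sigma X) = \xi(X)$, and applying it to $\Sigma^{-1}X$ gives $\xi(\Sigma^{-1}X) = \xi(X)$; so $\ker\xi$ is stable under $\Sigma^{\pm1}$. If $X \oplus Y \in \ker\xi$, then axiom (3) gives $0 = \xi(X) + \xi(Y)$. Both terms are $\geq 0$ by (1), so $\xi(X) = \xi(Y) = 0$ and $\ker\xi$ is closed under direct summands.

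\emph{Two-out-of-three.} Let $X \rt Y \rt Z \rt \Sigma X$ be a triangle with two of $X,Y,Z$ in $\ker\xi$.
\begin{enumerate}[\rm (i)]
\item If $X,Y\in\ker\xi$, then axiom (5) gives $0 \le \xi(Z) \le \xi(X) + \xi(Y) = 0$.
\item If $Y,Z \in \ker\xi$, rotate to the triangle $Y \rt Z \rt \Sigma X \rt \Sigma Y$. Axiom (5) gives $\xi(\Sigma X) \le \xi(Y) + \xi(Z) = 0$, and axiom (4) then gives $\xi(X) = 0$.
\item If $X,Z\in\ker\xi$, rotate backwards to $\Sigma^{-1}Z \rt X \rt Y \rt Z$. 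Axiom (5) gives $\xi(Y) \le \xi(\Sigma^{-1}Z) + \xi(X) = \xi(Z) + \xi(X) = 0$, again using (4).
\end{enumerate}
Hence $\ker\xi$ is a full triangulated subcategory closed under direct summands, i.e.\ thick.

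There is no real obstacle here. The only care needed is to use the correct rotation in each case, because axiom (5) bounds only the third term of a triangle. Rotations of triangles are triangles in $\C$, and the shift-invariance axiom (4) handles the appearance of $\Sigma X$ and $\Sigma^{-1} Z$ in the rotated triangles.
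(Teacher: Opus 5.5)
Your proposal is correct. The paper gives no written proof of this lemma; it calls it ``essentially only an observation'' and defers to an earlier paper. Your direct check is exactly the verification being left to the reader: contain $0$, stable under isomorphism, under $\Sigma^{\pm1}$ and under direct summands, and two-out-of-three via suitable rotations, all derived from axioms (1)--(5) of \ref{t-f}.
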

We now prove.
\begin{proposition}\label{pardesi}
Suppose $\xi$ is a weak triangle function on $\C$. Then $\xi$ induces a triangle function $\ov{\xi}$ on $\C/\ker \xi$ as $\ov{\xi}([M]) = \xi(M).$
\end{proposition}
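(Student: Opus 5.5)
The plan is to show that $\ov{\xi}$ is well defined on isomorphism classes of the Verdier quotient $\C/\ker\xi$, and then to check axioms (1)--(4) of Definition \ref{tri-f} directly. By Lemma \ref{ker-lemma}, $\ker\xi$ is thick, so the Verdier quotient $\C/\ker\xi$ exists and is triangulated. Its objects are those of $\C$, and its triangles are the images of triangles of $\C$ together with anything isomorphic to them. The quotient is skeletally small because $\C$ is essentially small.

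The main step, and the one I expect to be the only real obstacle, is well-definedness: if $M\cong N$ in $\C/\ker\xi$, then $\xi(M)=\xi(N)$. An isomorphism in the quotient is a roof $M\xleftarrow{s}W\xrightarrow{f}N$ in which $\cone(s)\in\ker\xi$ and $\cone(f)\in\ker\xi$. It therefore suffices to show that whenever $s\colon W\rt M$ has cone $C$ with $\xi(C)=0$, we have $\xi(W)=\xi(M)$. For this, apply axiom (5) to the triangle $W\rt M\rt C\rt\Sigma W$ and to its rotations $M\rt C\rt\Sigma W\rt\Sigma M$ and $C[-1]\rt W\rt M\rt C$. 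Together with axiom (4), $\xi(\Sigma X)=\xi(X)$, this gives
\[
\xi(M)\le \xi(W)+\xi(C[-1])=\xi(W) \quad\text{and}\quad \xi(W)=\xi(\Sigma W)\le \xi(M)+\xi(C)=\xi(M).
\]
Hence $\xi(W)=\xi(M)$, and applying this to both legs of the roof gives $\xi(M)=\xi(N)$.

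The axioms then follow quickly. Axiom (1) is inherited from $\xi$. For axiom (2), $M\cong 0$ in $\C/\ker\xi$ exactly when $M\in\ker\xi$ (using thickness), which holds exactly when $\xi(M)=0$. For axiom (3), direct sums in the quotient are computed as in $\C$. For axiom (4), every exact triangle in the quotient is isomorphic to the image of a triangle $X\rt Y\rt Z\rt\Sigma X$ of $\C$. By well-definedness, the values of $\ov{\xi}$ on its terms agree with those of $\xi$ on $X,Y,Z$. Axiom (5) for $\xi$, applied to the rotations of this triangle and combined with $\xi\circ\Sigma=\xi$, then yields the three inequalities (a), (b), (c). Alternatively, one can check (b) alone and invoke Remark \ref{rotation}.
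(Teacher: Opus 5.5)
Your proposal is correct and follows the paper's proof: you write an isomorphism in $\C/\ker\xi$ as a roof whose two legs have cones in $\ker\xi$, and you get $\xi(W)=\xi(M)$ by applying axiom (5) to rotations of the triangle $W\rt M\rt\cone(s)\rt\Sigma W$ together with $\xi\circ\Sigma=\xi$. As in the paper, you then check the triangle-function axioms by lifting triangles of the quotient to triangles of $\C$; your handling of axiom (2), covering both directions via thickness, is slightly more complete than the paper's.
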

\begin{proof}
  Set $\D = \C/\ker \xi$. We assume that $\D \neq 0$ otherwise there is nothing to prove.

   We  first have to prove that if $\beta \colon X \rt Y$  is an isomorphism in $\D$ then $\xi(X) = \xi(Y)$.
  Note  $\beta $ can be written as a left fraction
\[
\xymatrix{
\
&Z
\ar@{->}[dl]_{u}
\ar@{->}[dr]^{f}
 \\
X
\ar@{->}[rr]_{\beta = fu^{-1}}
&\
&Y
}
\]
As $u, \beta$ are isomorphisms in $\D$ we get $f$ is also an isomorphism in $\D$. Thus  $\cone(u), \cone(f) \in \ker\xi$.
We have a triangle
$$Z \rt X  \rt \cone(u) \rt\Sigma(Z).$$
Rotating we have triangles
$$  X \rt \cone(u) \rt \Sigma(Z) \rt \Sigma(X) \quad \text{and} \cone(u) \rt \Sigma(Z) \rt \Sigma(X) \rt \Sigma(\cone(u)).$$
By the first triangle we get $\xi(\Sigma(Z)) \leq \xi(\Sigma(X)$. So $\xi(Z) \leq \xi(X)$. Similarly by the second triangle we obtain $\xi(X) \leq \xi(Z)$. So $\xi(Z) = \xi(X)$.
Similarly by considering $f$ we get that $\xi(Z) = \xi(Y)$. So $\xi(X) = \xi(Y)$.

We now prove that $\ov{\xi}$ satisfies the axioms of a triangle function, see \ref{tri-f}. The axioms (1) and (3) are trivial to verify. If $\ov{\xi}(M) = \xi(M) = 0$ then $M \in \ker \xi$ and so is zero in $\D$.
Let  $t \colon M \rt N \rt L \rt \Sigma (M)$ is an exact triangle in $\D$. Then it is isomorphic in $\D$ to the image in $\D$ of an exact triangle in $\C$. Thus 4(b) holds by property (4) and (5) of \ref{t-f}.  As rotations of exact triangles are exact we get that $\ov{\xi}$ satisfies 4(a), (c). The result follows.
\end{proof}
We have
\begin{corollary}
\label{dont} Let $\C$ be a triangulated category and let $\xi \colon \C \rt \Z$ be a weak triangle function. Assume that $\ker \xi$ is a proper thick subcategory of $\C$. Then $(\C, \ker \psi)$ is a rigid pair.
\end{corollary}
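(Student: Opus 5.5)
The plan is to pass to the Verdier quotient and use Theorem \ref{infy}. (The $\psi$ in the statement is evidently a typo for $\xi$, so the claim is that $(\C, \ker \xi)$ is a rigid pair.)

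First, by Lemma \ref{ker-lemma}, $\ker \xi$ is a thick subcategory of $\C$, and it is proper by hypothesis. So the Verdier quotient $\D = \C/\ker\xi$ makes sense as a triangulated category. It is nonzero, since some $M$ with $\xi(M) > 0$ is not in the kernel.

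Next, Proposition \ref{pardesi} gives a triangle function $\ov{\xi}$ on $\D$, defined by $\ov{\xi}([M]) = \xi(M)$. The point already checked there is well-definedness on isomorphism classes of $\D$, via left fractions and rotation of triangles. Axioms (1) and (3) for $\ov{\xi}$ are inherited directly from $\xi$. Axiom (2) holds because $\ov{\xi}([M]) = 0$ forces $M \in \ker\xi$, so $M = 0$ in $\D$. The sub-additivity axioms follow from properties (4) and (5) of \ref{t-f} together with rotation.

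Finally, apply Theorem \ref{infy} to $\C$ and the proper thick subcategory $\ker\xi$. The quotient carries the triangle function $\ov{\xi}$, so $(\C, \ker\xi)$ is a rigid pair.

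The mechanism behind Theorem \ref{infy} is worth recording. Given a triangle $N_s \rt E_s \rt M \rt N_s[1]$ with $N_s \in \ker\xi$, we have $E_s \cong M$ in $\D$. Each indecomposable summand of $E_s$ lying outside $\ker\xi$ contributes at least $1$ to $\xi(E_s) = \xi(M)$, by additivity and because $\xi$ vanishes only on $\ker\xi$. Hence $\alpha_{\C,\ker\xi}(E_s) \le \xi(M)$, a bound independent of $s$, and the same argument handles triangles in $l(\C,\ker\xi,M)$.

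The only step needing care is this count, since the definition of $\alpha$ presupposes that $\C$ is KRS. I would state that standing assumption explicitly, as in the definition of a rigid pair, and then the bound $\alpha \le \xi(M)$ is immediate. The rest is a direct citation.
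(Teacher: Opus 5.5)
Your proof is correct and is the paper's argument: Proposition \ref{pardesi} supplies the triangle function $\ov{\xi}$ on $\C/\ker\xi$, and Theorem \ref{infy} then gives rigidity. You are also right to read $\psi$ as $\xi$, and to note that $\C$ must be assumed KRS (and essentially small) for $\alpha_{\C,\ker\xi}$ to be defined; the statement as written leaves this implicit.
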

\begin{proof}
  This follows from Proposition \ref{pardesi} and \ref{infy}.
\end{proof}

We now give
\begin{proof}[Proof of Theorem \ref{rigid-ci}]
Let $M \in  \CMS^{\leq i}(A)$. Let $\beta_n(M) = \ell(\Tor^A_n(M, k)$. Then it is known that $e_M(n) =  \beta_{2n}(M)$ and $o_M(n) =  \beta_{2n +1}(M)$ are both polynomial functions with same normalized leading term $\eta(M)$. Furthermore the growth of $e_M$ and $o_M$ is of order $n^{\cx(M)-1}$, see \cite[9.2.1]{A}.
Set $\xi(M) = \lim_{n \rt \infty} i!e_M(n)/n^{i-1} = \lim_{n \rt \infty} i!o_M(n)/n^{i-1}$. Then note that $\xi(M) = 0$ if and only if $M \in \CMS^{\leq i-1}(A)$.

We show that $\xi \colon \CMS^{\leq i}(A) \rt \Z$ is a weak triangle function. We verify the conditions in \ref{t-f}.
The axioms (1)-(4) are trivial to verify.
Let $M \rt N \rt L \rt \Omega^{-1}(M) \rt 0$ be a triangle in $\CMS(A)$. Then by construction of triangles in $\CMS(A)$ we have an exact sequence $0 \rt N \rt L \oplus F \rt \Omega^{-1}M \rt 0$ where $F$ is a free $A$-module. So we obtain $e_L(n) \leq e_N(n) + o_M(n)$ for all $n \geq 1$. The result follows.

As noted earlier $\ker \xi = \CMS^{\leq i-1}(A)$. So by \ref{dont} we get that \\ $(\CMS^{\leq i}(A), \CMS^{\leq i-1}(A))$ is a rigid pair.
\end{proof}

\section{Proof of Theorems \ref{tate} and \ref{tate-trivial}}
In this section we give proofs of Theorems \ref{tate} and \ref{tate-trivial}.
We first give
\begin{proof}[Proof of Theorem \ref{tate}]
We may assume that $M$ has no free summands. Let $\Fb$ be a minimal complete resolution of $M$. We index complexes homologically.
Let $\Fb \colon \cdots \rt \Fb_i \xrightarrow{\partial_i} \Fb_{i-1} \rt \cdots.$ We have $\image \partial_i = \Omega^i(M)$.

(1) \
(i) $\implies$ ii:

 Suppose $M \in \ker \alpha_X$. Fix $n \in \Z$. We have a surjection $\eta_n \colon \Fb_{n}\otimes X \rt \Omega^n(M) \otimes X \rt 0$ and a map $\epsilon_n \colon \Omega^n(M) \otimes X \rt \Fb_{n-1}\otimes X $.

Claim-1  The sequence
$$\mathcal{C} \colon  0 \rt  \Omega^n(M) \otimes X  \xrightarrow{\epsilon_n} \Fb_{n-1}\otimes X \rt \Fb_{n-2}\otimes X   \cdots  \rt \Fb_{i}\otimes X \rt \cdots.$$
is exact.

It suffices (and necessary) to prove $\epsilon_n$ is injective and $\mathcal{C}$ is exact at $\Fb_{n-1}\otimes X$.

Let $u \in \ker \epsilon_n$. We note that there exists $v \in \Fb_n\otimes X$  with $\eta_n(v) = u$. We have $\partial_n\otimes X(v) = \epsilon_n(\eta_n(v)) = 0$. As $\Fb\otimes X$ is exact there exists $w \in \Fb_{n+1}\otimes X $ with $\partial_{n+1}\otimes X(w) = v$. It follows that $u = 0$.  So $\epsilon_n$ is injective.

Next we show $\mathcal{C}$ is exact at $\Fb_{n-1}\otimes X$.
It is clear that $\partial_{n-1}\otimes X\circ \epsilon_n = 0$. So $\image \epsilon_n \subseteq \ker \partial_{n-1}\otimes X$. Let $u \in \ker \partial_{n-1}\otimes X$. As $\Fb\otimes X$ is exact there exists $w \in \Fb_{n}\otimes X $ with $\partial_{n}\otimes X(w) = u$. It follows that $u = \epsilon_n(\eta_n(w))$.  So $\ker \partial_{n-1}\otimes X  \subseteq \image \epsilon_n$. The result follows.

As $\mathcal{C}$ is exact and $X$ is MCM it follows that $ \Omega^n(M) \otimes X$ is MCM.

The assertion (ii)$ \implies$ (iii) is trivial.

(iii) $\implies$ (i). We have an exact sequence $ 0 \rt \Omega^{n+1}(M) \rt \Fb_{n} \rt \Omega^n(M) \rt 0$. Therefore we obtain
an exact sequence
$$ 0 \rt \Tor^A_1(\Omega^n(M), X) \rt \Omega^{n+1}(M)\otimes X \rt \Fb_{n}\otimes X \rt \Omega^n(M)\otimes X \rt 0. $$
As $X$ is free on the punctured spectrum we get $\Tor^A_1(\Omega^n(M), X)$ has finite length. Also $\depth \Omega^{n+1}(M)\otimes X > 0$. It follows that $\Tor^A_1(\Omega^n(M), X) = 0$. Thus for all $n \in \Z$ the sequence $ 0  \rt \Omega^{n+1}(M)\otimes X \rt \Fb_{n}\otimes X \rt \Omega^n(M)\otimes X \rt 0$ is exact. So $\Fb\otimes X$ is exact. Thus $M \in \ker \alpha_X$

(2) \
(i) $\implies$ ii: Let $M \in \ker \beta_X$.

Fix $n \in  \Z$. The exact sequence $\mathcal{C} \colon \rt \Fb_i \rt \cdots \rt \Fb_n \rt \Omega^n(M) \rt 0 $ induces a sequence
$$\Hom(\mathcal{C}, X) \colon 0\rt \Hom(\Omega^n(M), X) \rt \Hom(\Fb_n, X) \rt \cdots \rt \Hom(\Fb_i, X) \rt \cdots.$$
As $\Hom(\Fb, X)$ is exact and $\Hom(-, X)$ is left exact it follows that $\Hom(\mathcal{C}, X) $ is exact.
As $\Hom(\Fb_i, X) $ is MCM $A$-module for all $i$ it follows that $\Hom(\Omega_n(M), X)$ is MCM.

The assertion (ii)$ \implies$ (iii) is trivial.

(iii) $\implies$ (i). We have an exact sequence $ 0 \rt \Omega^{n+1}(M) \rt \Fb_{n} \rt \Omega^n(M) \rt 0$. Therefore we obtain
an exact sequence
\begin{align*}
   0 &\rt \Hom(\Omega^n(M), X) \rt \Hom(\Fb_n, X) \rt \Hom(\Omega^{n+1}(M), X)  \\
   &\rt \Ext_A^1(\Omega^n(M), X) \rt 0.
 \end{align*}
We assert that $U = \Ext_A^1(\Omega^n(M), X) = 0$. If not then note $U$ has finite length.
Counting depths as $\depth \Hom(\Omega^i(M), X) \geq 3$ for all $i$ and $\Hom(\Fb_n, X)$ is MCM, it follows that $\depth U \geq 1$ which is a contradiction.

(3) This is similar to (2).
\end{proof}

Next we give
\begin{proof}[Proof of Theorem \ref{tate-trivial}]
(1)  Suppose if possible  there exists $n_0$ such that \\ $\depth \Omega^n(M)\otimes X  \geq 1$ for all $n \geq n_0$. Then by an argument similar to proof of Theorem \ref{tate}(1)
 (iii) $\implies$ (i), we get $\Tor_1^A(\Omega_n(M), X) = 0$ for all $n \geq n_{0}$. So $\Tor^A_i(M, X) = 0$ for all $i \gg 0$. As $A$ has trivial Tor-vanishing we get $M$ or $X$ has finite projective dimension which is a contradiction.

(2)(a) Suppose if possible  there exists $n_0$ such that $\depth \Hom_A(\Omega^n(M), X)  \geq 3$ for all $n \geq n_0$. Then by an argument similar to proof of Theorem \ref{tate}(2) (iii) $\implies$ (i), we get $\Ext^1_A(\Omega^n(M), X) = 0$ for all $n \geq n_{0}$. So $\Ext_A^i(M, X) = 0$ for all $i \gg 0$. As $A$ has trivial Ext-vanishing we get $M$  has finite projective dimension or $X$ has finite injective  dimension. We have $M$ is non-free MCM. So $\projdim M = \infty$. Thus $X$ has finite injective dimension. But $A$ is Gorenstein. So we get $\projdim X < \infty$, a contradiction.

\end{proof}

\end{document}